\DeclareMathOperator{\End}{End}
\DeclareMathOperator{\Gal}{Gal}
\DeclareMathOperator{\ord}{ord}
\DeclareMathOperator{\GL}{GL}
\newtheorem{theorem}{Theorem}[section]
\newtheorem*{theorem*}{Theorem}
\newtheorem{lemma}[theorem]{Lemma}
\newtheorem{proposition}[theorem]{Proposition}
\newtheorem{corollary}[theorem]{Corollary}
\newtheorem{defn}[theorem]{Definition}
\numberwithin{equation}{section}
\newtheorem{lthm}{Theorem} 
\theoremstyle{remark}
\newtheorem{remark}[theorem]{Remark}
\newtheorem{example}[theorem]{Example}
\newcommand{\EE}{\mathbb{E}}
\newcommand{\R}{\mathbb{R}}
\newcommand\EatDot[1]{}
\newcommand{\cK}{\mathcal{K}}
\newcommand{\cG}{{\mathcal{G}}}
\newcommand{\cT}{{\mathcal{T}}}
\newcommand{\CC}{\mathbb{C}}
\newcommand{\cX}{\mathcal{X}}
\newcommand{\QQ}{\mathbb{Q}}
\newcommand{\ZZ}{\mathbb{Z}}
\newcommand{\FF}{\mathbb{F}}
\newcommand{\Qp}{\mathbb{Q}_p}
\newcommand{\Zp}{\mathbb{Z}_p}
\definecolor{Green}{rgb}{0.0, 0.5, 0.0}
\newcommand{\cO}{\mathcal{O}}
\newcommand{\Q}{\mathbb{Q}}
\newcommand{\Z}{\mathbb{Z}}
\newcommand{\Fl}{\mathbb{F}_l}
\newcommand{\Deck}{\mathrm{Deck}}
\newcommand{\fL}{\mathfrak{L}}
\renewcommand{\r}{\mathfrak r}
\renewcommand{\s}{\mathfrak s}
\renewcommand{\t}{\mathfrak t}
\newcommand{\cc}{\mathfrak c}
\renewcommand{\ss}{\mathrm{ss}}
\newcommand{\cY}{\mathcal{Y}}
  \DeclareFontFamily{U}{wncy}{}
  \DeclareFontShape{U}{wncy}{m}{n}{<->wncyr10}{}
  \DeclareSymbolFont{mcy}{U}{wncy}{m}{n}
  \DeclareMathSymbol{\sha}{\mathord}{mcy}{"58}
  \DeclareMathSymbol{\zhe}{\mathord}{mcy}{"11}
\DeclareMathSymbol{\lsb@l}{\mathalpha}{letters}{`l}
\newcommand{\mylabel}[2]{#2\def\@currentlabel{#2}\label{#1}}
\title{On towers of isogeny graphs with full level structures}
\let\@wraptoccontribs\wraptoccontribs
\author[A. Lei]{Antonio Lei}
\address[Lei]{Department of Mathematics and Statistics\\University of Ottawa\\
150 Louis-Pasteur Pvt\\
Ottawa, ON\\
Canada K1N 6N5}
\email{antonio.lei@uottawa.ca}
\author[K. Müller]{Katharina Müller}
\address[Müller]{Institut für Theoretische Informatik, Mathematik und Operations Research, Universität der Bundeswehr München, Werner-Heisenberg-Weg 39, 85577 Neubiberg, Germany}
\email{katharina.mueller@unibw.de}
\subjclass[2020]{Primary:  05C25, 11G20 Secondary: 11R23, 14G17, 14K02}
\keywords{Isogeny graphs with level structure, Iwasawa theory for graphs, oriented supersingular elliptic curves}
\begin{document}
\begin{abstract}
    Let $p$ and $l$ be distinct prime numbers, let $q$ be a power of a prime number $r$ that is distinct from $p$ and $l$, and let $M$ be a positive integer coprime to $ql$. 
     We define the directed graph $X_l^q(M)$  whose vertices are given by isomorphism classes of  elliptic curves over the finite field of $q$ elements equipped with a  level $M$ structure. The edges of $X_l^q(M)$ are given by $l$-isogenies. Fix a positive integer $N$ and write $M=p^nN$. We are interested in when the connected components of $X_l^q(p^nN)$ give rise to a tower of Galois coverings as $n$ varies. We show that only in the supersingular case we do get a tower of Galois coverings. We also study similar towers of  isogeny graphs given by oriented supersingular curves, as introduced by Col\`o--Kohel, enhanced with a level structure. 
\end{abstract}
\maketitle
\section{Introduction}
Let $p$ and $l$ be distinct prime numbers. Let $N$ be a positive integer that is coprime to $pl$. Let $G_N^n$ be the directed graph whose vertices are isomorphism classes $(E,P)$, where $E$ is an ordinary elliptic curve defined over a fixed finite field of characteristic $p$ and $P$ is a point of order $p^nN$ on $E$, and the edges of $G_N^n$ are given by $l$-isogenies. In \cite{LM1}, we studied the tower of graph coverings $$G_N^0\leftarrow G_N^1\leftarrow G_N^2\leftarrow \cdots\leftarrow G_N^n\leftarrow\cdots.$$ 
Note that the degree of the covering $G_N^{n+1}/G_N^n$ is $p$ thanks to the ordinarity condition. We studied the undirected graphs $\tilde G_N^n$ obtained from $G_N^n$ by forgetting the directions of the edges of $G_N^n$. We showed that if $E$ is an elliptic curve representing a non-isolated vertex of $G_1^0$ and $\tilde{\cG}_N^m$ is a connected component of $\cG_N^m$ containing a vertex arising from $E$, there exists an integer $m_0$ such that
\[
\tilde\cG_N^{m_0}\leftarrow \tilde\cG_N^{m_0+1}\leftarrow \tilde\cG_N^{m_0+2}\leftarrow \cdots\leftarrow \tilde\cG_N^{m_0+n}\leftarrow\cdots
\]
is an abelian $p$-tower in the sense of Vallières and McGown-Vallières \cite{vallieres,vallieres2,vallieres3}, i.e. the covering $\tilde\cG_N^{m_0+r}/\tilde\cG_N^{m_0}$ is Galois, whose Galois group is isomorphic to the cyclic group $\ZZ/p^n\ZZ$ for all $n\ge0$. Such towers exhibit properties that resemble $\Zp$-towers of number fields studied in classical Iwasawa theory \cite{iwasawa69,iwasawa73}. For example, similar to the class numbers of number fields inside a $\Zp$-tower, the number of spanning trees of $\tilde\cG_N^{m_0+n}$ can be described explicitly for $n$ sufficiently large (see \cite{vallieres3,leivallieres}).

In this article, we study towers of isogeny graphs that result in non-commutative Galois coverings under appropriate hypotheses. Throughout, we fix a finite field $\FF_q$, where $q$ is a power of a prime number $r$ that is coprime to $pl$. {We say that two elliptic curves defined over $\FF_q$ are equivalent if they are isomorphic over $\overline{\FF_q}$.} We fix a set of representatives of the equivalence classes of elliptic curves defined over $\mathbb{F}_{q}$, which we denote by $S$. 

Given two elliptic curves $E,E'\in S$, an isogeny $\phi:E\to E'$ is a non-constant morphism of varieties that sends the point at infinity of $E$ to the point at infinity of $E'$. Recall that all isogenies are group homomorphisms. We say that $\phi$ is an $\ell$-isogeny if its degree is equal to $l$. Since $l\nmid q$, this is equivalent to $\ker(\phi)$ containing $l$ elements in $E(\overline{\FF_q})$.

\begin{defn}\label{def:intro}
    Let $l,r$ be distinct prime numbers, $q$ a power of $r$ and $M$ a positive integer coprime to $ql$. We define the directed graph $X_l^q(M)$ whose vertices are given by triples $(E,Q_1,Q_2)$, where $E\in S$ and $\{Q_1,Q_2\}$ is a basis of $E[M]\cong (\ZZ/M\ZZ)^2$. The edges of $X_l^q(M)$ are given by $l$-isogenies, i.e. the set of edges from $(E,Q_1,Q_2)$ to $(E',Q_1',Q_2')$ is the set of $l$-isogenies $\phi\colon E\to E'$ such that $\phi(Q_1)=Q_1'$ and $\phi(Q_2)=Q_2'$.
\end{defn}
Note that we allow isogenies and $M$-torsion points to be defined over $\overline{\mathbb{F}_{q}}$, while the elliptic curves are defined over $\mathbb{F}_{q}$. We fix one representative $E$ for each class in $S$. In particular, the vertices $(E,Q_1,Q_2)$ and $(E,-Q_1,-Q_2)$ are considered to be distinct if $M>2$. Furthermore, $\phi$ and $-\phi$ give rise to two distinct edges.  If $\phi$ induces an edge from $(E,Q_1,Q_2)$ to $(E',Q'_1,Q'_2)$, then $-\phi$ induces an edge from $(E,Q_1,Q_2)$ to $(E,-Q'_1,-Q'_2)$.

The basis $\{Q_1,Q_2\}$ is called a $\Gamma(M)$-level structure on the elliptic curve $E$. Fix a positive integer $N$ that is coprime to $pql$. We write $M=p^nN$ for some integer $n\ge0$.
 For $i\in\{1,2\}$, {there exist unique $R_i\in E[N]$ and $S_i\in E[p^n]$ such that $Q_i=R_i+S_i$. Furthermore, $\{R_1,R_2\}$ is a $\Gamma(N)$-level structure and $\{S_1,S_2\}$ is a $\Gamma(p^n)$-level structure on $E$}. We shall denote a vertex of $X_l^q(p^nN)$ by $(E,R_1,R_2,P,Q)$ with $R_1,R_2\in E[N]$ and $P,Q\in E[p^n]$ such that $\{R_1+ P, R_2+ Q\}$ is a  $\Gamma(p^nN)$-level  structure on $E$.  When $n=0$, we can discard $P$ and $Q$, and simply write $(E,R_1,R_2)$ for a vertex in $X_l^q(N)$.

The map $(E,R_1,R_2,P,Q)\mapsto(E,R_1,R_2,pP,pQ)$ induces a graph covering $X_l^q(p^{n+1}N)/ X_l^q(p^{n}N)$ (see Corollary~\ref{cor:cover}).
We are interested in when the connected components of $X_l^q(p^nN)$ give rise to a tower of Galois coverings as $n$ varies. 
Let $X$ be a connected component of $X_l^q(N)$. For $n\ge1$, let $X_n$ be the pre-image of $X$ under the covering $X_l^q(p^nN)/ X_l^q(N)$.
Note that if $v_1,v_2\in V(X_n)$, the elliptic curves that give rise to $v_1$ and $v_2$ are isogenous. In particular, {either both curves are ordinary or both are supersingular. In other words, the ordinary-supersingular dichotomy is constant for all vertices of $X$}. Therefore, it makes sense to refer to a connected component $X$ as  \textbf{\textit{ordinary}} or \textbf{\textit{supersingular}}. It turns out that the coverings $X_n/X$ exhibit different properties depending on {this dichotomy}. In particular, we prove: 

\begin{lthm}[{Corollary~\ref{cor:ord-not-Galois}}, {Theorem~\ref{thm:abelian}}]\label{thmA}
Let $X$ be an ordinary connected component of $X_l^q(N)$. For $n\ge1$, let $X_n$ be the pre-image of $X$ under the covering $X_l^q(p^nN)/ X_l^q(N)$.
\begin{itemize}
    \item[(i)] The covering $X_{n}/X$ is not Galois when $n$ is sufficiently large.
    \item[(ii)] Let $X_0'=X$. For each $n\ge1$, let $X_n'$ be a connected component of the pre-image of $X_{n-1}'$ in $X_l^q(p^nN)$. Then $X_n'/X$ is a Galois covering. Furthermore, $\displaystyle\varprojlim_n \Gal(X_n/X)$ is an abelian $p$-adic Lie group of dimension $1$ or $2$.
\end{itemize}
\end{lthm}
The reason why $X_n/X$ is not Galois is that the number of connected components of $X_n$ grows rapidly in $n$; see Theorem~\ref{thm:ordinary-components} for a precise description.
In the supersingular case, we show:

\begin{lthm}[{Corollaries~\ref{cor:density}, \ref{cor:connected}, \ref{cor:galois-supersingular} and Remarks~\ref{cyclic-for-large-N}},\ref{rem:subgroup}]\label{thmB}
 Suppose that $p\ne 2$. Let $X^\ss$ be a supersingular connected component of $X_l^q(N)$. For $n\ge1$, let $X_n$ be the pre-image of $X^\ss$ under the covering $X_l^q(p^nN)/X_l^q(N)$.
 \begin{itemize}
     \item[(i)] The covering $X^\ss_{n}/ X^\ss$ is Galois for all $n$ if and only if $(\ZZ/Np^2\ZZ)^\times=\langle \ell\rangle$. When this occurs, the Galois group of the covering $X_{n}^\ss/ X^\ss$ is isomorphic to $\GL_2(\ZZ/p^n\ZZ)$.
     \item[(ii)] There exists an integer $m_0$ such that the number of connected components in $X_n^\ss$ stabilizes. Let $Z_{m_0}$ be a connected component of $X_{m_0}^\ss$. For $n\ge  m_0$, let $Z_n$ be the pre-image of $Z_{m_0}$ in $X_n^\ss$. Then $Z_n/Z_{m_0}$ is a Galois covering. Furthermore, $\displaystyle\varprojlim_{n\ge m_0} \Gal(Z_n/Z_{m_0})$ is an open subgroup of $\GL_2(\Zp)$. We can take $m_0=0$ if and only if $[(\ZZ/Np^2\ZZ)^\times:\langle \ell\rangle]=[(\ZZ/N\ZZ)^\times :\langle \ell\rangle]$, in which case the inverse limit of Galois groups is exactly $\GL_2(\Zp)$. In particular, the density of such $l$ is positive.
 \end{itemize}
\end{lthm}

In particular, in the context of Theorem~\ref{thmB},
\[
 X_1\leftarrow X_2\leftarrow\cdots\leftarrow X_n\leftarrow\cdots\]
gives an explicit example that fits into the framework of non-commutative Iwasawa of graph coverings developed in \cite{KM}. We remark that Roda \cite{thesis-roda} proved a simple necessary and sufficient condition for the connectivity of $X_l^q(p^nN)^{\ss}$. We shall review this result in the main body of the article (see Theorem~\ref{thm:number-connected-comp}). This result is utilized to prove the non-triviality of the density of primes satisfying Theorem~\ref{thmB}.

In non-commutative Iwasawa theory of elliptic curves studied in \cite{CFKSV}, the authors conjectured that the Pontryagin dual of the $p$-primary Selmer group of a $p$-ordinary elliptic curve over a $p$-adic Lie extension $\cK_\infty$ of a number field $\cK_0$ that contains the cyclotomic $\Zp$-extension should satisfy the so-called $\mathfrak{M}_H(G)$-property. Here, $G=\Gal(\cK_\infty/\cK_0)$ and $H$ is a subgroup of $G$ such that $\cK_\infty^H$ is the cyclotomic $\Zp$-extension of $\cK_0$. The analogue of the $\mathfrak{M}_H(G)$-property  in the context of graph coverings has recently been studied in \cite{KM}. It is therefore natural to seek an appropriate quotient inside the tower given by Theorem~\ref{thmB} that would play the role of the cyclotomic $\Zp$-extension of a number field. 

Indeed, the graph  $X_l^q(p^nN)$ admits a natural  quotient $Y_l^q(p^nN)$  obtained by the map $$(E,R_2,R_2,P,Q)\mapsto (E,R_1,R_2,\langle P,Q\rangle),$$ where $\langle-,-\rangle$ is the Weil pairing (see Definition~\ref{def-zp-graph}). We show that these graphs give rise to an abelian $p$-tower, similar to the isogeny graphs studied in \cite{LM1}. In particular, we prove:

  \begin{lthm}[{Corollary~\ref{cor:Zp-tower}}]\label{thmC}
      There exists an integer  $m_0$  such that if $Y_{m_0}$ is a connected component of $Y_l^q(p^{m_0}N)$ and $Y_{n+m_0}$ is the pre-image of $Y_{m_0}$ in $Y_l^q(p^{n+m_0}N)$, then $Y_{n+m_0}/Y_{m_0}$ is Galois, with Galois group isomorphic to $\ZZ/p^n\ZZ$ for all $n\ge0$.
  \end{lthm}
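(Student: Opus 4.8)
The strategy is to reduce the statement about $Y_l^q(p^nN)$ to the analysis of $X_l^q(p^nN)$ already carried out for Theorem B, by tracking what the Weil-pairing quotient does to the relevant Galois group. The Weil pairing $\langle P,Q\rangle$ of a $\Gamma(p^nN)$-level structure lies in $\mu_{p^n}$ (the $N$-part being fixed), and the quotient map $(E,R_1,R_2,P,Q)\mapsto(E,R_1,R_2,\langle P,Q\rangle)$ factors $X_l^q(p^nN)\to Y_l^q(p^nN)$ through the action of $\SL_2(\ZZ/p^n\ZZ)$ on the level structure, since $\SL_2$ is exactly the kernel of the determinant, and the Weil pairing transforms by the determinant under a change of basis. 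So I would first record that, \emph{if} $X_n/X$ is Galois with group $\GL_2(\ZZ/p^n\ZZ)$ (or more generally a large enough subgroup thereof), then $Y_n/Y_{m_0}$ is Galois with group the quotient by (the image of) $\SL_2(\ZZ/p^n\ZZ)$, which is $(\ZZ/p^n\ZZ)^\times \cong \ZZ/p^n\ZZ$ when $p>2$ — though here, since $N$ is taken sufficiently large rather than small, I should instead run the ordinary-curve style argument of \cite{LM1} directly on the $Y$-graphs rather than lean on Theorem B.

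The cleaner route, which I would adopt, is the following. First, establish that $Y_l^q(p^{n+1}N)/Y_l^q(p^nN)$ is a graph covering of degree $p$: the fiber over $(E,R_1,R_2,\zeta)$ with $\zeta\in\mu_{p^nN}$ consists of the lifts $(E,R_1,R_2,\zeta')$ with $\zeta'^p=\zeta$ that actually arise from a $\Gamma(p^{n+1}N)$-level structure, and by surjectivity of the Weil pairing on bases this is a torsor under $\mu_p\cong\ZZ/p\ZZ$. Second, choose $N$ large enough (using the same counting/ramification input as in \cite{LM1}, applied to the reduction type at hand) so that for $n\ge m_0$ the component $Y_{m_0}$ stabilizes in the sense that no vertex of $Y_n$ has nontrivial automorphisms interfering with the covering and the fiber over every vertex has full size $p^{n-m_0}$; this is where the hypothesis ``$N$ sufficiently large'' is spent, exactly as in the ordinary case. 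Third, identify the deck group: a deck transformation of $Y_{n+m_0}/Y_{m_0}$ is determined by its effect on the $\mu_{p^{n}}$-coordinate of a single vertex, and multiplication by elements of $1+p^{m_0}\ZZ/p^{n+m_0}\ZZ \cong \ZZ/p^n\ZZ$ gives a compatible family of such transformations; conversely any deck transformation arises this way because the graph is connected and the covering map only sees the pairing coordinate. Combining these three points gives that $Y_{n+m_0}/Y_{m_0}$ is Galois with group $\ZZ/p^n\ZZ$, and compatibility in $n$ makes it a $\Zp$-tower.

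The main obstacle is the second step: proving that after passing to level $m_0$ the covering becomes ``clean'' — i.e. unramified with full fibers and free deck action — uniformly in $n$. This is precisely the phenomenon that makes Theorem A fail for $X$ itself (automorphisms of elliptic curves and the failure of the level structure to lift freely obstruct the Galois property at small level), and the fix is the same as in \cite{LM1}: once $N$ is large enough that every relevant elliptic curve over $\F_{q^k}$ has no extra automorphisms acting nontrivially on $E[N]$, the stabilizers vanish and the argument goes through. I would isolate this as a lemma (the analogue of the stabilization lemma of \cite{LM1}), prove it by the standard bound on automorphism groups of elliptic curves over finite fields together with the faithfulness of the automorphism action on sufficiently large torsion, and then the rest of the proof is the formal covering-space bookkeeping sketched above. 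One subtlety to check carefully is that the Weil-pairing quotient is compatible with $l$-isogenies — i.e. that an $l$-isogeny $\phi\colon E\to E'$ sends $\langle P,Q\rangle$ to $\langle\phi P,\phi Q\rangle^{?}$, which it does up to the factor $\deg\phi=l$, a unit mod $p^n$ — so the edges of $Y_l^q(p^nN)$ are well defined and the covering maps are morphisms of graphs; this is routine but must be stated.
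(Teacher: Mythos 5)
Your overall skeleton is the same as the paper's: view $Y_l^q(p^nN)$ as controlled by the Weil-pairing (determinant) coordinate, note that an $l$-isogeny raises it to the power $\deg\phi=l$ (the paper packages this as the voltage assignment $\beta_n=\det\circ\alpha_n$), inject the group of units congruent to $1$ mod $p^{m_0}$ into $\Deck(Y_{n+m_0}/Y_{m_0})$ by acting on that coordinate, and obtain surjectivity from the fact that a deck transformation of a connected cover is determined by its value at one vertex (the paper runs this rigidity step using the absence of multiple edges, which is precisely what the hypothesis $N>C_q$, i.e.\ ``$N$ sufficiently large'', buys, both there and in the identification of $Y_l^q(p^nN)$ with the derived graph of $\beta_n$ in Proposition~\ref{prop:iso}).

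The genuine gap is at your second step. The existence of $m_0$ --- equivalently, that the pre-image in $Y_l^q(p^{n+m_0}N)$ of a connected component of $Y_l^q(p^{m_0}N)$ is connected for every $n$ --- is \emph{not} a consequence of taking $N$ large, and the lemma you propose to isolate (automorphism groups of elliptic curves over $\FF_{q^k}$ are bounded and act faithfully on $E[N]$ for large $N$) cannot deliver it: such automorphism/stabilizer input only rules out multiple edges; it says nothing about the number of connected components, which a priori could grow with $n$ --- and indeed does grow for the graphs $X_n$ in the ordinary case (Theorem~\ref{thm:ordinary-components}). What is needed, and what the paper proves in the theorem preceding Corollary~\ref{cor:Zp-tower}, is that the number of connected components of $Y_l^q(p^nN)$ is uniformly bounded in $n$: if $l^u\equiv 1\pmod N$, the self-isogeny $[l^u]$ has $l$-power degree, fixes $(E,R_1,R_2)$, and multiplies the pairing coordinate by $l^{2u}$, so each component contains a whole coset of $\langle l^{2u}\rangle\subset(\ZZ/p^n\ZZ)^\times$, whose index is bounded independently of $n$ because the closure of $\langle l\rangle$ in $\Zp^\times$ is open. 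Without this (or some substitute), your third step does not go through: if the pre-image were disconnected, the deck group of $Y_{n+m_0}/Y_{m_0}$ would not be the full group of order $p^n$, and the claimed identification with $\ZZ/p^n\ZZ$ fails. (A minor further slip: the covering $Y_l^q(pN)/Y_l^q(N)$ has degree $p-1$, not $p$; the degree is $p$ only from level one upward, which is harmless here but should be stated correctly.)
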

  Note that unlike Theorems~\ref{thmA} and \ref{thmB}, where a divergence emerges between the ordinary and supersingular cases, Theorem~\ref{thmC} is independent of whether the chosen connected component is ordinary or supersingular.
  
In \S\ref{S:orientation}, we study isogeny graphs that arise from \textit{oriented elliptic curves}, which were first introduced in \cite{colokohel}.  An orientation of an elliptic curve $E$ is given by an embedding $K\hookrightarrow\End(E)\otimes\QQ$, where $K$ is an imaginary quadratic field. The structure of these graphs (in the supersingular case) has been studied in \cite[\S4]{onuki}. More recently, the authors of \cite{arpin-et-all,arpin-win} described these graphs as volcano graphs, similar to those studied in \cite{kohel}. 

Let $M\ge1$ be an integer that is coprime to $ql$. We define the isogeny graphs of oriented elliptic curves equipped with a $\Gamma(M)$-level structure, which we denote by $\cX_l^q(M)$ (see Definition~\ref{def:oriented-graph-level}). As in the case of $X_l^q(M)$, each connected component of $\cX_l^q(M)$ is either ordinary or supersingular. Furthermore, an imaginary quadratic field $K$ is attached to each connected component, depending on the choice of orientation. We describe the connected components of these graphs explicitly. In particular, we prove: 

\begin{lthm}[{Theorems~\ref{thm:volcano}, \ref{thm:volcano2} and \ref{thm:volcano3}}]
       Assume that  $M>2$. Let $\cY_M$ be a connected component of $\mathcal{X}_l^q(M)$ and let $K$ be the imaginary quadratic field attached to $\cY_N$. When $\cY_M$ is supersingular, we have:
    \begin{itemize}
        \item If $l$ splits in $K$, $\cY_M$ is the double intertwinement of an infinite tectonic volcano graph.
        \item If $l$ ramifies in $K$,  $\cY_M$ is the double intertwinement of an infinite  volcano graph with a connected crater.
        \item If $l$ is inert in $K$,  $\cY_M$ is the double intertwinement of an infinite volcano graph whose crater is disconnected.
    \end{itemize}
    When $\cY_M$ is ordinary, we have:
    \begin{itemize}
        \item If $l$ splits in $K$, $\cY_M$ is the double intertwinement of a finite tectonic volcano graph.
        \item If $l$ ramifies in $K$,  $\cY_M$ is the double intertwinement of a  finite volcano graph with a connected crater.
        \item $l$ is inert in $K$,  $\cY_M$ is the double intertwinement of a  finite volcano graph whose crater is disconnected.
    \end{itemize}
    \end{lthm}
The reader is referred to Definitions~\ref{def:volcano}, \ref{def:intertwine} and \ref{def:crater} where the concepts of "double intertwinement", "volcano graphs", "tectonic volcano graphs", and "craters" are introduced.

The final result of the present article is an analogue of Theorem~\ref{thmA} for isogeny graphs of oriented supersingular elliptic curves with level structures:

\begin{lthm}[{Corollary~\ref{cor:ss-not-Galois}}, {Theorem~\ref{thm:ss-abelian}}]\label{thmE}
Let $\cX$ be an connected component of $\cX_l^q(N)$. Suppose that $\cX$ is supersingular and let $K$ be the imaginary quadratic field attached to $\cX$. For $n\ge1$, let $\cX_n$ be the pre-image of $\cX$ under the covering $\cX_l^q(p^nN)/ \cX_l^q(N)$.
\begin{itemize}
    \item[(i)] The covering $\cX_{n}/\cX$ is not Galois when $n$ is sufficiently large.
    \item[(ii)] Let $\cX_0'=\cX$. For each $n\ge1$, let $\cX_n'$ be a connected component of the pre-image of $\cX_{n-1}'$ in $\cX_l^q(p^nN)$. Then $\cX_n'/\cX$ is a Galois covering. Furthermore, $\displaystyle\varprojlim_n \Gal(\cX_n/\cX)$ is an abelian $p$-adic Lie group of dimension $1$ or $2$.
\end{itemize}
\end{lthm}


We conclude this introduction by highlighting the growing interest in isogeny graphs of elliptic curves with level structures in recent years. One such example is Goren--Kassaei's work \cite{gorenkassaei} that delved into the dynamics of Hecke operators on modular curves utilizing such graphs with $\Gamma_1(N)$-level structure. In a different vein, Arpin \cite{arpin}  investigated the implications of isogeny graphs for equivalence classes of supersingular elliptic curves with $\Gamma_0(N)$-level structures in the realm of isogeny-based cryptography. During the preparation of the present article, we learned about the recent work of Codogni--Lido \cite{codogni-lido}, where they considered more general level structures. Analogously to Theorem~\ref{thm:number-connected-comp}, they studied the number of connected components in these graphs. In addition, they analyzed the eigenvalues of the associated adjacency matrices, establishing connections between these graphs and modular forms (the special case of $\Gamma_0(N)$-level structure was also studied in \cite{sugiyama,LM-zeta}).

\subsection*{Acknowledgement}
We thank Pete Clark, Sören Kleine, and Daniel Vallières for helpful exchanges regarding the content of the present article. We thank the anonymous referees for their helpful comments and suggestions on earlier versions of the article, which led to many improvements. The research of AL is supported by the NSERC Discovery Grants Program RGPIN-2020-04259 and RGPAS-2020-00096. While preparing this article, KM was a postdoctoral fellow at Universit\'e Laval. During that time, her research was supported by the grants mentioned above.

\section{Voltage assignment and graph covering}
In this section, we show that the isogeny graphs with level structures introduced in Definition~\ref{def:intro} can be naturally realized as voltage graphs. Our discussion can be regarded as a natural extension of \cite[Appendix A]{LM1} to the non-commutative setting. 

\subsection{Generalities of voltage graphs}
Let us first recall the definition of a voltage assignment.
\begin{defn}Let $X$ be a directed graph. \label{def:voltage}
    \item[i)] The set of vertices and the set of edges of $X$ are denoted by $V(X)$  and $\EE(X)$, respectively.
    
    \item[ii)] Let  $(G,\cdot)$ a group. A  function $\alpha:\EE(X)\rightarrow G$ is called a \textbf{$G$-valued voltage assignment} on $X$.

    \item[iii)] To each voltage assignment $\alpha$ given as in ii), we define the \textbf{derived graph} $X(G,\alpha)$ to be the graph whose vertices and edges are given by $V(X)\times G$ and $\EE(X)\times G$, respectively. If  $(e,\sigma)\in \EE(X)\times G$, it links $(s,\sigma)$ to $(t,\sigma\cdot\alpha(e))$,  where $e$ is an edge in $X$ that links $s$ to $t$.

    \item[iv)] A graph arises from a voltage assignment is called a \textbf{voltage graph}. 
    
\end{defn}

Let $G$, $X$ and $\alpha$ be given as in Definition~\ref{def:voltage}. There is a natural graph covering $X(G,\alpha)/ X$ given by
\[   V(X(G,\alpha))\ni(v,\sigma)\mapsto v,\quad  \EE(X(G,\alpha))\ni(e,\sigma)\mapsto e.
\]
There is a natural left action by $G$ on $X(G,\alpha)$, given by $g\cdot (v,\sigma)=(v,g\cdot \sigma)$ and $g\cdot(e,\sigma)=(e,g\cdot \sigma)$ for $g,\sigma\in G$, $v\in V(X)$ and $e\in \EE(X)$.

\begin{lemma}\label{lem:intermediate}
    Let $X$, $G$ and $\alpha$ be as in Definition~\ref{def:voltage}. Let $H$ be a normal subgroup of $G$. We write $\tilde\alpha$ for the composition of $\alpha$ with the projection map $G\to G/H$. Then $X(G/H,\tilde\alpha)$ is an intermediate covering of $X(G,\alpha)/X$.
    \end{lemma}
    \begin{proof}
        Let $\pi_1:X(G,\alpha)\to X(G/H,\tilde\alpha)$ be the map given by
    \[   V(X(G,\alpha))\ni(v,\sigma)\mapsto (v,\sigma H),\quad  \EE(X(G,\alpha))\ni(e,\sigma)\mapsto (e,\sigma H).\]
    Let $\pi_2:X(G/H,\tilde\alpha)\to X$ and $\pi_3:X(G,\alpha)\to X$ be the natural coverings. 
    It can be verified by direct calculations that $\pi_1$ is a graph covering and that $\pi_3=\pi_2\circ\pi_1$. Hence, the lemma follows.
    \end{proof}

\begin{defn}
\label{def:connected}Let $X$ be a directed graph.
\item[i)] We write $X'$ for the undirected graph obtained from $X$ by ignoring the directions of the edges of $X$. The natural map from $X$ to $X'$ is called the \textbf{forgetful map}.
   \item[ii)]   We say that $X$ is \textbf{connected} if $X'$ is connected. 
\item[iii)] A \textbf{connected component} of $X$ is the pre-image of a connected component of $X'$ in $X$ under the forgetful map.
\item[iv)] If $Y/X$ is a covering of directed graph with the projection map $\pi:Y\to X$, the group of \textbf{deck transformations} of $Y/X$, denoted by $\Deck(Y/X)$, is the group of graph automorphisms $\sigma:Y\to Y$ such that $\pi\circ\sigma=\pi$. 
\item[v)]     We call a covering $Y/X$ of directed graphs \textbf{$d$-sheeted} if $d$ is a positive integer such that each element of $V(X)$ has $d$ pre-images in $V(Y)$.
\item[vi)] We say that a covering $Y/X$ of directed graphs it is \textbf{Galois}  {if $Y/X$ is a $d$-sheeted covering and $\vert \textup{Deck}(Y/X)\vert =d$.}
\end{defn}
The notion of connectedness here is sometimes referred to as "weakly connected", as opposed to "strongly connected", where one requires the existence of a path between any two given vertices. We shall see in Remark~\ref{rk:connected} that these two notions coincide for the graphs of interest in this paper.

The following result allows us to determine whether a graph covering arising from a voltage assignment is Galois.

\begin{proposition}\label{prop:voltage}
Let $X$ be a directed graph, $G$ a finite group equipped with a $G$-valued voltage assignment $\alpha$ on $X$. Let $Y$ denote the derived graph $X(G,\alpha)$.
\item[i)]If $X$ is connected, then the natural action of $G$ on $Y$ permutes transitively the connected components of $Y$.
\item[ii)]If  $Y$ is connected, then $Y/X$ is a Galois covering. Furthermore, its Galois group is isomorphic to $G$.
\end{proposition}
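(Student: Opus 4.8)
The plan is to prove both parts by exploiting the explicit combinatorial structure of the derived graph $Y = X(G,\alpha)$ together with the free left $G$-action $(v,\sigma)\mapsto(v,g\sigma)$ described after Definition~\ref{def:voltage}. For part (i), I would first observe that this $G$-action on $Y$ commutes with the covering map $Y/X$, hence sends connected components of $Y$ to connected components. To see transitivity, fix a vertex $v_0\in V(X)$ and consider the vertices $(v_0,\sigma)$ for $\sigma\in G$; it suffices to show that every connected component of $Y$ contains at least one such vertex, since $g\cdot(v_0,\sigma)=(v_0,g\sigma)$ and $G$ acts transitively on $\{v_0\}\times G$ by left multiplication. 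Given any vertex $(w,\tau)$ of $Y$, connectedness of $X$ means there is a path (in the underlying undirected graph $X'$) from $w$ to $v_0$; lifting this path edge by edge through the derived-graph construction — an edge $e$ from $s$ to $t$ lifts to an edge from $(s,\sigma)$ to $(t,\sigma\cdot\alpha(e))$, and similarly for the reversed edge — produces a path in $Y'$ from $(w,\tau)$ to some $(v_0,\sigma')$. Hence every component meets $\{v_0\}\times G$, and $G$ permutes the components transitively.

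For part (ii), suppose $Y$ is connected. Since $Y = X(G,\alpha)$, the map $Y/X$ is a $|G|$-sheeted covering, and the $G$-action just described gives $|G|$ distinct deck transformations of $Y/X$ (distinct because they differ already on $\{v_0\}\times G$), so $|\Deck(Y/X)|\ge|G|$. On the other hand, for any covering of connected graphs the number of deck transformations is at most the number of sheets, so $|\Deck(Y/X)| = |G|$ and the covering is Galois with $\Deck(Y/X)\cong G$; passing to $Y'/X'$ via Lemma~\ref{lem:decks} (whose hypotheses are met: $Y$ is a derived graph, and we have just shown $Y/X$, hence $Y'/X'$, is Galois) gives $\Gal(Y/X) = \Gal(Y'/X')\cong G$.

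Alternatively, and perhaps more cleanly for the Galois claim, I would verify directly that $\Deck(Y'/X')$ acts transitively on each fiber of $Y'/X'$ (the definition of Galois for graph coverings): a fiber is $\{v\}\times G$, and left multiplication by $G$ already acts transitively there; combined with the standard fact that $|\Deck|$ never exceeds the number of sheets for connected coverings, this pins down the Galois group as $G$. The main obstacle — really the only subtlety — is the careful bookkeeping in the path-lifting argument of part (i): one must check that both an edge and its formal reversal lift compatibly in the derived graph, so that an undirected walk in $X'$ genuinely lifts to an undirected walk in $Y'$; once that is set up, transitivity of left multiplication on $G$ does all the remaining work. Everything else is a direct application of Lemma~\ref{lem:decks} and the definitions.
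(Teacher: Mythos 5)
Your proof is correct, but it takes a genuinely different route from the paper. The paper disposes of both parts in one line by citing Gonet's results on voltage graphs (Theorem~2.10 of the cited article and \S2.3 of the accompanying thesis) and then invoking Lemma~\ref{lem:decks} to identify deck transformations of the directed covering with those of the undirected one. You instead give a self-contained argument: for (i), the path-lifting computation in the derived graph (including the check that a reversed edge over $e$ through $(t,\tau')$ is the lift $(e,\tau'\alpha(e)^{-1})$, which is exactly the subtlety you flag) shows every component meets the fiber $\{v_0\}\times G$, and left multiplication then acts transitively on components; for (ii), the free $G$-action supplies $|G|$ distinct deck transformations, and the standard bound $|\Deck(Y'/X')|\le$ number of sheets (valid since $Y'$, and hence $X'$, is connected, so a deck transformation is determined by its value at one vertex) forces equality, i.e.\ the cover is Galois with group $G$. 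Two small remarks: your phrase ``we have just shown $Y/X$, hence $Y'/X'$, is Galois'' is slightly backwards relative to Definition~\ref{def:connected}iv), where Galois for directed covers is \emph{defined} through the undirected cover; your second, ``cleaner'' formulation (transitivity of $\Deck(Y'/X')$ on a fiber plus the sheet bound) avoids this and does not need the final assertion of Lemma~\ref{lem:decks} at all. The trade-off is the expected one: the paper's proof is shorter but leans on external references, while yours makes the fiber-transitivity mechanism explicit and would keep the paper self-contained at the cost of half a page and of quoting the standard freeness-of-deck-action fact for connected graph coverings.
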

\begin{proof}
    This follows from \cite[Theorem~2.10]{gonet22} and \cite[\S2.3]{gonet-thesis}.
\end{proof}

\begin{corollary}\label{cor:not-Galois}
    Let $X$, $Y$ and $G$ be as in Proposition~\ref{prop:voltage}. Let $d$ be the number of connected components of $Y$. If $d!>|G|$, then $Y/X$ is not Galois.
\end{corollary}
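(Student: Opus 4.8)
The plan is to argue by contradiction: suppose $Y/X$ is Galois and deduce $d!\le|G|$, contradicting the hypothesis. The first step is to observe that the $d$ connected components of $Y$ are pairwise isomorphic as coverings of $X$. Indeed, the group $G$ acts on $Y$ by automorphisms over $X$ (each $g\in G$ sends $(v,\sigma)\mapsto(v,g\sigma)$, which commutes with the projection $(v,\sigma)\mapsto v$), and by Proposition~\ref{prop:voltage}i) this action permutes the connected components of $Y$ transitively; hence any component is carried isomorphically onto any other by a suitable $g\in G$. Forgetting the directions of the edges, the connected components $Y'_1,\dots,Y'_d$ of $Y'$ are likewise pairwise isomorphic as coverings of $X'$.

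The second step is a lower bound on the size of the deck group. Consider the homomorphism $\rho\colon\Deck(Y'/X')\to S_d$ recording the permutation a deck transformation induces on $\{Y'_1,\dots,Y'_d\}$ (an automorphism of $Y'$ sends connected components to connected components, and, lying over $X'$, it restricts on each component to a covering isomorphism). I claim $\rho$ is surjective: fixing for each $i$ a covering isomorphism $\theta_i\colon Y'_1\xrightarrow{\,\sim\,}Y'_i$ over $X'$ (possible by the first step, with $\theta_1=\mathrm{id}$), an arbitrary $\sigma\in S_d$ is realised by the automorphism of $Y'$ that equals $\theta_{\sigma(i)}\circ\theta_i^{-1}$ on $Y'_i$. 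Hence $d!=|S_d|$ divides $|\Deck(Y'/X')|$; and since $\Gal(Y/X)=\Deck(Y'/X')$ under the Galois hypothesis (Definition~\ref{def:connected}iv)), this gives
\[
|\Gal(Y/X)|\;=\;|\Deck(Y'/X')|\;\ge\;d!.
\]

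The third step invokes the Galois hypothesis together with the standard fact that the deck group of a Galois covering acts simply transitively on the fibres, so that its order equals the degree of the covering; here that degree is $|G|$, since the fibre over any vertex $v$ of $X$ is $\{(v,\sigma):\sigma\in G\}$ (as recalled after Definition~\ref{def:voltage}). Thus $|\Gal(Y/X)|=|G|$, and combining with the display above yields $d!\le|G|$, contradicting $d!>|G|$. Therefore $Y/X$ is not Galois.

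The point requiring the most care, and the one I would scrutinise when writing this out in full, is the interplay between the (possibly disconnected) total space $Y$ and the notion of Galois covering: the convenient facts one is used to — that the deck group of a Galois cover has order equal to the degree, or that deck transformations are rigid — are normally stated for connected covers, so one should either cite a version valid for a disconnected total space or, exactly as in the surjectivity argument for $\rho$, reduce to the connected components of $Y'$ over $X'$ and reassemble. Everything else is routine bookkeeping once the $G$-action furnished by Proposition~\ref{prop:voltage} is in hand.
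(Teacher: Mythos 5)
Your argument is correct and follows essentially the same route as the paper's proof: you use Proposition~\ref{prop:voltage}i) to see that the components are pairwise isomorphic over $X$, deduce that every permutation of the $d$ components is realised by a deck transformation (so $\absolute{\Deck(Y'/X')}\ge d!$), and contrast this with the fact that a Galois covering has deck group of order equal to its degree $|G|$. Your explicit gluing of the isomorphisms $\theta_{\sigma(i)}\circ\theta_i^{-1}$ just spells out the step the paper states in one line, so there is nothing substantively different to flag.
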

\begin{proof}
    Let $Y_1$ and $Y_2$ be two connected components of $Y$. Proposition~\ref{prop:voltage}i) tells us that there exists $g\in G$ such that 
    \[
         V(Y_1)\ni(v,\sigma)\mapsto (v,g\cdot \sigma),\quad  \EE(Y_1)\ni(e,\sigma)\mapsto (e,g\cdot \sigma)
    \]
    induces an isomorphism of graphs from $Y_1$ to $Y_2$. Thus, any permutation of the connected components of $Y$ gives rise to an element of $\Deck(Y/X)$. Since there are $d!$ such permutations and the degree of the covering is $|G|$, the covering is not Galois if $d!>|G|$.
\end{proof}

\begin{remark}
    \label{rem:connected-comp} 
    Let $G$ be a finite group. In the course of this article, we frequently study the number of connected components of $X(G,\alpha)$. To do so, it suffices to fix a vertex $v\in V(X)$ and study the set \[\left\{ (v,g)\in V(X(G,\alpha))\mid (v,g) \textup{ lies in the same connected component as $(v,1)$}\right\}.\]
    Let $d_v$ be the cardinality of this set.  As explained in the proof of Corollary~\ref{cor:not-Galois}, all connected components of $X(G,\alpha)$ are isomorphic to each other. Therefore, the number of connected components of $X(G,\alpha)$ is given by $\frac{\vert G\vert }{d_v}$.
\end{remark}

\subsection{Realizing isogeny graphs as voltage graphs}
\label{sec:realize}

The goal of this section is to realize the graph $X_l^q(p^nN)$ as a voltage graph arising from a $\GL_2(\ZZ/p^n\ZZ)$-valued voltage assignment on $X$, similar to the discussion in \cite[Appendix A]{LM1}.

For each elliptic curve $E\in S$, we fix a $\Zp$-basis $\{s_E,t_E\}$ of the Tate module $T_p(E)$.
Let $e\in \EE(X_l^q(N))$ with source $(E,R_1,R_2)$ and target $(E',R_1',R_2')$. It arises from some $l$-isogeny $\phi:E\rightarrow E'$. Since $p$ and $l$ are coprime, $\phi$ induces a  $\Zp$-isomorphism
\[
\phi:T_p(E)\rightarrow T_p(E').
\]
This allows us to define the following voltage assignment:
\begin{defn}\label{def:alpha_n}
For any given $e\in \EE(X_l^q(N))$ arising from an $l$-isogeny $\phi:E\rightarrow E'$, we define $g_e\in \GL_2(\Zp)$ to be the transpose of the matrix of $\phi$ with respect to our chosen bases $\{s_E,t_E\}$ and $\{s_{E'},t_{E'}\}$ so that the following equation holds
\[
\begin{pmatrix}
    \phi(s_E)\\\phi(t_E)
\end{pmatrix}=g_e\cdot
\begin{pmatrix}
    s_{E'}\\t_{E'}
\end{pmatrix}.
\]

    For an integer $n\ge 0$, we define $\alpha_n:\EE(X_l^q(N))\rightarrow \GL_2(\ZZ/p^n\ZZ)$ to be the voltage assignment sending $e$ to  the image of $g_e$ under the natural projection $\GL_2(\Zp)\rightarrow\GL_2(\ZZ/p^n\ZZ)$.
\end{defn}
Let $(E,R_1,R_2,P,Q)\in V(X_\ell^q(p^nN))$. Since $\{P,Q\}$ is a basis of $E[p^n]$, there exists a unique $\sigma\in \GL_2(\ZZ/p^n\ZZ)$ such that
    \begin{equation}
    \begin{pmatrix}
        P\\Q
    \end{pmatrix}=\sigma\cdot\begin{pmatrix}
        \overline{s}_E\\\overline{t}_E
    \end{pmatrix},\label{eq:condition-basis}    
    \end{equation}
        where $\overline{s}_E$ and $\overline{t}_E$ denote the images of $s_E$ and $t_E$ in $E[p^n]$, respectively. This induces a well-defined injective map
        \begin{align*}
            \Phi_n\colon V(X_\ell^q(p^nN))&\to V\left(X(\GL_2(\ZZ/p^n\ZZ),\alpha_n)\right)\\
            (E,R_1,R_2,P,Q)&\mapsto \big((E,R_1,R_2),\sigma\big).
        \end{align*}
\begin{theorem}\label{thm:iso}
For all $n\ge0$, the map $\Phi_n$ defines an isomorphism of graphs \[\Phi:X_l^q(p^nN)\stackrel\sim\to X(\GL_2(\ZZ/p^n\ZZ),\alpha_n).\]
\end{theorem}
\begin{proof}
    To simplify notation, we write  $Y_n=X(\GL_2(\ZZ/p^n\ZZ),\alpha_n)$, $Z_n=X_l^q(p^nN)$ and $G_n=\GL_2(\ZZ/p^n\ZZ)$ in this proof.
    
   By definition, $\Phi_n$ is injective on $V(Z_n)$. Given any $\left((E,R_1,R_2),\sigma\right)\in V(Y_n)$, we may define a basis of $E[p^n]$ via \eqref{eq:condition-basis}. Hence,  $\Phi$ is a bijection from $V(Z_n)$ to $V(Y_n)$.

    It remains to show that the bijection $\Phi_n$ respects the edges of the two graphs. That is, an isogeny $\phi$ induces an edge in $Z_n$ from $v$ to $w$  if and only if it induces an edge in $Y_n$ from $\Phi_n(v)$ to $\Phi_n(w)$.  Suppose that $e\in \EE(Z_0)$ with $(E,R_1,R_2)$ and $(E',R'_1,R'_2)$ as the source and target, respectively. Let $\phi$ be the corresponding $l$-isogeny inducing $e$. The same isogeny gives rise to an edge from $(E,R_1,R_2,P,Q)$ to $(E',R'_1,R'_2,P',Q')$ in $Z_n$ if and only if 
    \begin{equation}
        \phi(P)=P'\quad \text{and} \quad\phi(Q)=Q'.\label{eq:edge-condition}
    \end{equation}
    Let us write 
    \begin{align*}
        \Phi_n((E,R_1,R_2,P,Q))&=((E,R_1,R_2),\sigma),\\
          \Phi_n((E',R'_1,R'_2,P',Q'))&=((E',R'_1,R'_2),\sigma').
    \end{align*}
    A direct calculation shows that \eqref{eq:edge-condition} is equivalent to the equation
    \[\sigma\alpha_n(e)=\sigma',\]
    which is precisely the condition for there to be an edge going from $((E,R_1,R_2),\sigma)$ to $((E',R'_1,R'_2),\sigma')$ in $Y_n$. This shows that $\Phi_n$ respects the edges of $Y_n$ and $Z_n$, as desired.
\end{proof}

\begin{corollary}\label{cor:cover}
    Let $n\ge1$ be an integer. The map 
    \begin{align*}
    V(X_l^q(p^nN))&\rightarrow V(X_l^q(p^{n-1}N)),\\ (E,R_1,R_2,P,Q)&\mapsto (E,R_1,R_2,pP,pQ)
    \end{align*}
     induces a graph covering $X_n\rightarrow X_{n-1}$.    
\end{corollary}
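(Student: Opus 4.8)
The plan is to deduce Corollary~\ref{cor:cover} from Theorem~\ref{thm:iso} by showing that the multiplication-by-$p$ map on level structures corresponds, under the identifications provided by the derived-graph picture, to the natural projection $\GL_2(\ZZ/p^nN\ZZ)\to\GL_2(\ZZ/p^{n-1}N\ZZ)$ — or, after separating the prime-to-$p$ part as in the remarks preceding the statement, the projection $\GL_2(\ZZ/p^n\ZZ)\to\GL_2(\ZZ/p^{n-1}\ZZ)$. The key point is that a voltage assignment valued in $G$ that factors through a quotient $G\twoheadrightarrow \bar G$ produces a canonical map of derived graphs $X(G,\alpha)\to X(\bar G,\bar\alpha)$ covering the identity on $X$, and this is exactly the situation here because $\alpha_{n}$ is defined (Definition~\ref{def:alpha_n}) as the reduction mod $p^{n}$ of the fixed $\GL_2(\Zp)$-valued assignment $e\mapsto g_e$, hence $\alpha_{n-1}$ is the further reduction of $\alpha_n$.

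First I would record the general fact: if $\pi\colon G\to \bar G$ is a surjective group homomorphism and $\bar\alpha=\pi\circ\alpha$, then the map $V(X)\times G\to V(X)\times\bar G$, $(v,\sigma)\mapsto(v,\pi(\sigma))$, together with the analogous map on $\EE(X)\times G$, is a graph morphism $X(G,\alpha)\to X(\bar G,\bar\alpha)$ sitting over $X$; this is immediate from the definition of the derived graph, since an edge $(e,\sigma)$ joining $(s,\sigma)$ to $(t,\sigma\cdot\alpha(e))$ is sent to $(e,\pi(\sigma))$ joining $(s,\pi(\sigma))$ to $(t,\pi(\sigma)\cdot\bar\alpha(e))$. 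Taking $G=\GL_2(\ZZ/p^n\ZZ)$, $\bar G=\GL_2(\ZZ/p^{n-1}\ZZ)$ and $\pi$ the reduction map gives a morphism $X(\GL_2(\ZZ/p^n\ZZ),\alpha_n)\to X(\GL_2(\ZZ/p^{n-1}\ZZ),\alpha_{n-1})$. One then checks (by a counting argument, or directly) that this morphism is a graph covering of degree $|\ker\pi| = |\GL_2(\ZZ/p^n\ZZ)|/|\GL_2(\ZZ/p^{n-1}\ZZ)|$; restricting to the connected component $X_n$ lying over a fixed connected component $X\subseteq X_l^q(N)$, and noting $X_{n-1}$ is by definition the corresponding pre-image, yields a covering $X_n\to X_{n-1}$.

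Second, I would identify this covering with the stated map on triples. Under the bijection $\Phi$ of Theorem~\ref{thm:iso}, a vertex $(E,R_1,R_2,P,Q)$ of $X_l^q(p^nN)$ corresponds to $((E,R_1,R_2),\sigma)$ where $\binom{P}{Q}=\sigma\binom{\bar s_E}{\bar t_E}$ in $E[p^n]$. Reducing $\sigma$ mod $p^{n-1}$ and applying the inverse of $\Phi$ at level $n-1$, the resulting vertex has level-$p^{n-1}$ part $\binom{P'}{Q'}=\bar\sigma\binom{\bar s_E}{\bar t_E}$ in $E[p^{n-1}]$; but reduction of $\sigma$ mod $p^{n-1}$ is intertwined with the map $E[p^n]\to E[p^{n-1}]$, $x\mapsto px$, via the compatibility of $\{\bar s_E,\bar t_E\}$ across levels, so $P'=pP$ and $Q'=pQ$. (Here I would be slightly careful with conventions: multiplication by $p$ sends a generator of $E[p^n]$ to a generator of $E[p^{n-1}]$, which is the content of the identification of $T_p(E)/p^{n-1}$ with $p\cdot(T_p(E)/p^n)$, and the transpose convention in Definition~\ref{def:alpha_n} must be tracked consistently — this is the one spot where a sign/transpose bookkeeping error could creep in.) Combining with the edge-compatibility already established in the proof of Theorem~\ref{thm:iso}, the displayed map $(E,R_1,R_2,P,Q)\mapsto(E,R_1,R_2,pP,pQ)$ is precisely the covering $X_n\to X_{n-1}$ constructed above, completing the proof.

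The main obstacle I anticipate is not conceptual but organizational: making the compatibility of the fixed bases $\{s_E,t_E\}$ across the tower of levels explicit enough that "reduction mod $p^{n-1}$ on $\GL_2$" literally matches "multiplication by $p$ on $p^n$-torsion," and ensuring the transpose in Definition~\ref{def:alpha_n} is handled uniformly at every level so that the square relating $\Phi$ at level $n$, $\Phi$ at level $n-1$, the reduction map, and multiplication by $p$ actually commutes on the nose. Once that square is checked, everything else is formal from the derived-graph formalism and Theorem~\ref{thm:iso}.
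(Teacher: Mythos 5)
Your proposal is correct and follows essentially the same route as the paper: the paper likewise invokes the general fact that a voltage assignment composed with a quotient map $G\to G/N$ yields an intermediate covering $X(G,\alpha)\to X(G/N,\tilde\alpha)$, identifies $X(G_n/N_n,\tilde\alpha_n)$ with $X(G_{n-1},\alpha_{n-1})\cong X_l^q(p^{n-1}N)$ via Theorem~\ref{thm:iso}, and concludes with the Tate-module compatibility $\Theta_{n-1}\circ\pi_n=[p]\circ\Theta_n$, which is exactly your "reduction mod $p^{n-1}$ matches multiplication by $p$" step.
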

\begin{proof}
Let $G_n=\GL_2(\ZZ/p^n\ZZ)$ and let $\alpha_n$ be the voltage assignment on $X=X_l^q(N)$ given in Definition~\ref{def:alpha_n}. Let $\pi_n:G_n\rightarrow G_{n-1}$ be the natural projection map and $N_n=\ker(\pi_n)$. {Let $\tilde{\alpha_n}=\pi_n\circ\alpha_n$}. Then Lemma~\ref{lem:intermediate} tells us that there is an intermediate graph covering $X(G_n,\alpha_n)/ X(G_n/N_n,\tilde\alpha_n)$ of $X(G_n,\alpha_n)/X$.

 Let $\Theta_n:V(X(G_n,\alpha_n))\rightarrow V(X_l^q(p^nN))$ be the inverse of $\Phi_n$.
 Since $G_{n-1}\cong G_n/N_n$ and $\tilde\alpha_n=\alpha_{n-1}$, we may identify $X(G_n/N_n,\tilde\alpha_{n})$ with $X(G_{n-1},\alpha_{n-1})$, and therefore with $X_l^q(p^{n-1}N)$. 
 Finally, it follows from the definition of the Tate module that
 \[
 \Theta_{n-1}\circ \pi_n=[p]\circ \Theta_{n},
 \]
 which concludes the proof.
\end{proof}

\begin{corollary}\label{cor:Galois}
Let $X$ be a connected component of $X_l^q(N)$ (in the sense of Definition~\ref{def:connected}). For an integer $n\ge1$, let $X_n$ denote the pre-image of $X$ in $X_l^q(p^nN)$ under the natural projection map. If $X_n$ is connected, the covering $X_n/X$ is Galois whose Galois group is isomorphic to $\textup{GL}_2(\ZZ/p^n\ZZ)$.
\end{corollary}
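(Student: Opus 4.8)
The plan is to reduce the corollary directly to the machinery already set up in the previous results. By Theorem~\ref{thm:iso}, the graph $X_l^q(p^nN)$ is isomorphic to the derived graph $X_l^q(N)(\GL_2(\ZZ/p^n\ZZ),\alpha_n)$, and this isomorphism is compatible with the covering maps (Corollary~\ref{cor:cover}). So I would first restrict attention to the connected component $X$ of $X_l^q(N)$: the voltage assignment $\alpha_n$ restricts to a $\GL_2(\ZZ/p^n\ZZ)$-valued voltage assignment on $X$, and the derived graph $X(\GL_2(\ZZ/p^n\ZZ),\alpha_n|_X)$ is precisely the pre-image $X_n$ of $X$ in $X_l^q(p^nN)$ under Theorem~\ref{thm:iso}. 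This is the only slightly fiddly point — one must check that the pre-image of a connected component under the covering coincides with the derived graph of the restricted voltage assignment, which is immediate from the description of the covering map $(v,\sigma)\mapsto v$ on vertices.

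Once that identification is in place, the result is an instance of Proposition~\ref{prop:voltage}ii). Indeed, $X$ is a connected directed graph, $G=\GL_2(\ZZ/p^n\ZZ)$ is a finite group, $\alpha_n|_X$ is a $G$-valued voltage assignment, and $X_n$ is by hypothesis connected. Proposition~\ref{prop:voltage}ii) then says directly that $X_n/X$ is a Galois covering with Galois group isomorphic to $G=\GL_2(\ZZ/p^n\ZZ)$. So the proof is essentially a two-line invocation: identify $X_n$ with the restricted derived graph, then apply Proposition~\ref{prop:voltage}ii).

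The step I expect to require the most care is the bookkeeping around restriction to a connected component: one should confirm that $\alpha_n$ genuinely restricts (i.e. every edge of $X$ is an edge of $X_l^q(N)$, which is true by Definition~\ref{def:connected}iii) since a connected component is a full subgraph) and that the derived graph of the restriction is literally the full pre-image $X_n$ rather than merely a union of its connected components. Both are routine given the explicit vertex/edge descriptions in the proof of Theorem~\ref{thm:iso}, but they are the only things that are not pure quotation. Everything else — the transitivity of the $G$-action on components, the identification of deck transformations across the forgetful map, the computation that the Galois group is $G$ — has already been done in Lemma~\ref{lem:decks} and Proposition~\ref{prop:voltage}.

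Thus the proof I would write is: restrict $\alpha_n$ to $X$, observe via Theorem~\ref{thm:iso} that $X(\GL_2(\ZZ/p^n\ZZ),\alpha_n|_X)\cong X_n$ compatibly with the covering to $X$, and invoke Proposition~\ref{prop:voltage}ii) with the connectedness hypothesis on $X_n$ to conclude that $X_n/X$ is Galois with group $\GL_2(\ZZ/p^n\ZZ)$.
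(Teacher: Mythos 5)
Your proposal is correct and follows the paper's own argument: the paper proves this corollary by a direct appeal to Proposition~\ref{prop:voltage}ii), exactly as you do. Your additional care in checking that $\alpha_n$ restricts to the connected component $X$ and that the derived graph of the restriction is the full pre-image $X_n$ is a reasonable (and routine) elaboration of the same step the paper leaves implicit.
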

\begin{proof}
    This follows directly from Proposition~\ref{prop:voltage}ii).
\end{proof}

We conclude this section with a couple of remarks on the connected components of $X_l^q(p^nN)$, which will be useful for subsequent sections.

\begin{remark}\label{rk:multiple-edges}
    The graph $X_l^q(p^nN)$ contains multiple edges if and only if there are two distinct $l$-isogenies $\phi,\psi\colon E\to E'$, which coincide on $E[p^nN]$. Given any two $l$-isogenies $\phi$ and $\psi$, we have $
    \deg(\phi-\psi)\le 4l$. Thus,  $\ker(\phi-\psi)$ does not contain any elements of order $p^nN$ whenever $p^nN>4l$. In other words, $X_l^q(p^nN)$ does not contain multiple edges if $n$ is sufficiently large. 
    \end{remark}

\begin{remark}\label{rk:connected}
    Let $X_n$ be a connected component of $X_l^q(p^nN)$. Let $e\in \EE(X_n)$ with $v=(E,R_1,R_2,P,Q)$ and $w=(E',R'_1,R'_2,P',Q')$ as the source and target, respectively. Then $e$ arises from an $l$-isogeny $\phi:E\rightarrow E'$ such that $\phi(U)=U'$ for $U\in\{R_1,R_2,P,Q\}$. The dual isogeny $\hat\phi$ gives rise to an edge in $X_n$ from $w$ to $(E,lR_1,lR_2,lP,lQ)$. Let $d$ be the order of $[l]$ as an automorphism on $E[p^nN]$. In particular, $l^d U=U$ for all $U\in\{R_1,R_2,P,Q\}$. On repeatedly applying $\phi$ and $\hat\phi$ alternatively, we obtain a path of the form 
    \begin{align*}
    w\stackrel{\hat\phi}\to &(E,lR_1,lR_2,lP,lQ)\stackrel{\phi}\to(E',lR_1',lR_2',lP',lQ')\stackrel{\hat\phi}\to\\
    &(E,l^2R_1,l^2R_2,l^2P,l^2Q)\stackrel\phi\to\cdots\stackrel{\hat\phi}\to(E,l^{d}R_1,l^{d}R_2,l^{d}P,l^{d}Q)=v.
    \end{align*}
In other words,  each directed edge in $X_n$ can be "reversed" via a path in the opposite direction. Hence,  $X_n$ is a strongly connected directed graph.
\end{remark}

\begin{defn}
   An isogeny of elliptic curves $\phi:E\to E'$ is said to be an \textbf{$l$-power isogeny} if the degree of $\phi$ is a power of $l$. If $E=E'$, we say that $\phi$ is an \textbf{$l$-power endomorphism} on $E$.
\end{defn}

\begin{remark}
    \label{rem:path}
    Let $v=(E,R_1,R_2,P,Q)$ and $w=(E',R'_1,R'_2,P',Q')$ be two vertices of $X_l^q(p^nN)$. Since every $l$-power isogeny can be realized as the composition of $l$-isogenies, it follows from the definition that there is a path from $v$ to $w$ if and only if there exists an $l$-power isogeny $\phi\colon E\to E'$  such that $\phi(U)=U'$ for $U\in\{R_1,R_2,P,Q\}$.  
\end{remark}

\section{The ordinary case}

The goal of this section is to prove Theorem~\ref{thmA} stated in the introduction.
We fix an ordinary connected component $X$ of $X_l^q(N)$ and define $X_n$ to be the pre-image of $X$ in $X_l^q(p^nN)$ under the natural projection map. As explained in the introduction, all elliptic curves that give rise to a vertex in $X_n$ {are ordinary}. Furthermore, since these curves are isogenous to each other, their endomorphism rings are orders in the same imaginary quadratic field, which we denote by $K$. We refer to $K$ as the CM field of $X$. Note that the prime $r$ is split in $K$.

We begin with the following preliminary lemmas.

\begin{lemma}\label{lem:order}
Let $n\ge1$ be an integer.    The order of the group $\textup{GL}_2(\ZZ/p^n\ZZ)$ is $p^{4(n-1)}(p^2-1)(p^2-p)$.
\end{lemma}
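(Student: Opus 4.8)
The plan is to argue by induction on $n$, using the natural reduction maps between the groups $\GL_2(\ZZ/p^n\ZZ)$. For the base case $n=1$, I would count invertible matrices over the field $\FF_p$ directly: the first row may be any nonzero vector in $\FF_p^2$, giving $p^2-1$ choices, and the second row any vector not proportional to the first, giving $p^2-p$ choices; hence $|\GL_2(\FF_p)| = (p^2-1)(p^2-p)$, which is exactly the claimed formula at $n=1$ since $p^{4(n-1)}=1$ there.

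For the inductive step, I would consider the reduction homomorphism $\pi\colon \GL_2(\ZZ/p^n\ZZ) \to \GL_2(\ZZ/p^{n-1}\ZZ)$. The two facts to establish are that $\pi$ is surjective and that $\ker(\pi)$ has exactly $p^4$ elements. For surjectivity, given $\overline{M} \in \GL_2(\ZZ/p^{n-1}\ZZ)$, lift its entries arbitrarily to a matrix $M$ over $\ZZ/p^n\ZZ$; since $\det\overline{M}$ is a unit it is in particular nonzero modulo $p$, so $\det M$ is a unit modulo $p$ and therefore a unit in $\ZZ/p^n\ZZ$, whence $M \in \GL_2(\ZZ/p^n\ZZ)$ and $\pi(M)=\overline{M}$. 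For the kernel, a matrix lies in $\ker(\pi)$ precisely when it has the form $I + p^{n-1}A$ with $A \in M_2(\FF_p)$; each such matrix has determinant $\equiv 1 \pmod p$, hence is automatically invertible, and there are $|M_2(\FF_p)| = p^4$ of them. Combining these, $|\GL_2(\ZZ/p^n\ZZ)| = p^4 \cdot |\GL_2(\ZZ/p^{n-1}\ZZ)|$, and the formula $p^{4(n-1)}(p^2-1)(p^2-p)$ follows by induction.

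There is no genuine obstacle here; the only step requiring an argument at all is the surjectivity of the reduction map, which rests on the elementary fact that an integer matrix whose determinant is a unit modulo $p$ has determinant a unit modulo every power of $p$. Alternatively, one can bypass the induction entirely and analyze the single reduction map $\GL_2(\ZZ/p^n\ZZ) \to \GL_2(\FF_p)$ directly: it is surjective by the same lifting argument, and its kernel $\{I + pA : A \in M_2(\ZZ/p^{n-1}\ZZ)\}$ has order $(p^{n-1})^4 = p^{4(n-1)}$, which yields the stated cardinality in one step.
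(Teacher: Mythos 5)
Your proposal is correct, and its closing alternative (analyzing the single reduction map $\GL_2(\ZZ/p^n\ZZ)\to\GL_2(\FF_p)$, whose kernel $I+pM_{2\times 2}(\ZZ/p^{n-1}\ZZ)$ has order $p^{4(n-1)}$) is exactly the paper's argument; the inductive version you lead with is just that same kernel computation iterated one step at a time. No gaps — the lifting argument for surjectivity and the counts of the kernel and of $\GL_2(\FF_p)$ are all sound.
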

\begin{proof}Let $G_n=\GL_2(\ZZ/p^n\ZZ)$. Let $\pi_n:G_n\rightarrow G_{1}$ be the group homomorphism induced by the projection $\ZZ/p^n\ZZ\rightarrow \ZZ/p\ZZ$. Then $\pi_n$ is surjective with $$\ker\pi_n=I_2+p M_{2\times 2}(\ZZ/p^n\ZZ).$$ Thus, $|\ker(\pi_n)|=p^{4(n-1)}$, and $|G_n|=p^{4(n-1)}|G_{1}|$. It is a standard fact that $|G_1|=(p^2-1)(p^2-p)$. Hence the lemma follows.
   \end{proof}
   
  \begin{lemma}[{\cite[Theorem 4]{interlando-elia-i}, \cite[Theorems 2 and 3]{interlando-elia-ii}}]\label{structure-residue}
  Let $A$ be the ring of integers of a finite extension of $\Q_p$, and let $\varpi$ be a uniformizer of $A$. Let $f$ and $e$ be the degree of inertia and the degree of ramification of the extension, respectively.  Let $G_a$ be the multiplicative group $(A/\varpi^a)^\times$, where $a\ge1$ is an integer.
  \begin{itemize}
  \item If $e=1$ and $p\ne2$,  
  \[G_a\cong \ZZ/(p^f-1)\ZZ\times (\ZZ/p^{a-1}\ZZ)^{f},\]
  \item If $e=1$, $p=2$ and $a\ge 3$,  
  \[
  G_a\cong\ZZ/(2^f-1)\ZZ\times (\ZZ/2\ZZ)\times \ZZ/2^{a-2}\ZZ\times(\ZZ/2^{a-1}\ZZ)^{f-1}.
  \] 
  \item If $e=2$, $p\ne 2$ and $a\ge3$ is odd,  
  \[
  G_a\cong\ZZ/(p^f-1)\ZZ\times  (\ZZ/p^{(a-1)/2}\ZZ)^{2f} .
  \]
  \item If $e=2$, $p\ne 2$ and $a\ge4$ is even,   
  \[
  G_a\cong\ZZ/(p^f-1)\ZZ\times  (\ZZ/p^{a/2-1}\ZZ)^{f} \times  (\ZZ/p^{a/2}\ZZ)^{f}.
  \]   
  \item If $e=2$, $p=2$ and $a>4$, 
  \[
  G_a\cong\ZZ/(2^f-1)\ZZ\times (\ZZ/2^{h_1}\ZZ)^{f}\times (\ZZ/2^{h_2})^f,
  \]
  where $h_1=\lceil (a-1)/2\rceil$ and $h_2=\lceil(a-2)/2\rceil$.
  \end{itemize}
  \end{lemma}

\begin{corollary}\label{cor:order-mod}
     Let $A$ be the ring of integers of a finite extension of $\Qp$ with ramification index $e\in\{1,2\}$ and degree of inertia $f$. Let $n\ge1$ be an integer, and let $\alpha\in A^\times$. There exists a constant $d\le p^f-1$ such that for $n$ sufficiently large, the order of the image of $\alpha$ in $(A/p^n)^\times$ is equal to $dp^{n-1}$ (resp. $dp^{n}$) if $e=1$ (resp. $e=2$).   
\end{corollary}
\begin{proof}
We consider the case $p\ne 2$. The topological group $A^\times$ is isomorphic to $C:=\ZZ/(p^f-1)\times\Zp^{\oplus ef}$. Let $g_1,\ldots, g_{ef}\in A^\times$ be the elements that correspond to the elements $(0,1,0,\dots,0)$, $\dots $, $(0,0,\dots,0,1)$ in $C$.

We have $\varpi^e A=pA$. By Lemma~\ref{structure-residue}, the order of $g_i$ in $(A/p^n)^\times$ is of the form $p^{t_{n,i}}$, where $t_{n,i}=n-1$ (resp. $t_{n,i}\in\{n,n-1\}$) if $e=1$ (resp. $e=2$). Let $(\alpha_0,\alpha_1,\dots ,\alpha_{ef})$ be the image of $\alpha$ in $C$. Let $k_i=\ord_{{g_i}}(\alpha_i)$ for $i=1,\ldots ,ef$, and let $d$ be the order of $\alpha_0$ (as an element in $\ZZ/(p^f-1)\ZZ$). Then, the order of the image of $\alpha$ in $(A/\varpi^n)^\times$ is given by 
\[
\mathrm{lcm}(d,p^{t_{n,1}-k_1},\dots,p^{t_{n,ef}-k_{ef}}),
\]
from which the corollary follows. The case $p=2$ can be shown in a similar way.\end{proof}

\begin{theorem} \label{thm:ordinary-components}
    Let $X$ be an ordinary connected component of $X_l^q(N)$, and let $K$ be the CM field of $X$. For each integer $n\ge1$, let $X_n$ be the pre-image of $X$ in $X_l^q(p^nN)$. For $n$ sufficiently large, the number of connected components in $X_n$ is given by $cp^{2(n-1)}$ (resp. $cp^{3(n-1)}$) if $p$ is split (resp. non-split) in $K$, where $c\ge (p^2-p)$ is a constant independent of $n$.
\end{theorem}
\begin{proof}
    The degree of the covering $X_l^q(p^nN)/X_l^q(N)$ equals $|\GL_2(\ZZ/p^n\ZZ)|$. As explained in Remark~\ref{rem:connected-comp}, the number of connected components of $X_n$ is given by $|\GL_2(\ZZ/p^n\ZZ)|$ divided by the number of pre-images of a fixed $(E,R_1,R_2)\in V(X)$ lying in the same connected component of $X_n$.  
    
    As all curves with complex multiplication by an order in $K$ are isogenous, we may assume that $\textup{End}(E)\cong\mathcal{O}$ is an order such that $[\mathcal{O}_K:\mathcal{O}]$ is not divisible by $l$. In particular, the splitting behaviour of $l$ in $\mathcal{O}$ is the same as that in $\mathcal{O}_K$.

    Let $(E,R_1,R_2,P,Q)$ and $(E,R_1,R_2,P',Q')$ be two pre-images of $(E,R_1,R_2)$ in $V(X_n)$. By Remark~\ref{rem:path}, they lie in the same connected component of $X_n$ if and only if there is an $l$-power isogeny $\phi\colon E\to E$ such that $\phi(R_1)=R_1$, $\phi(R_2)=R_2$, $\phi(P)=P'$ and $\phi(Q)=Q'$.
    
    As discussed in \cite[Remark~2.4]{LM1}, there exists an elliptic curve $\mathbf{E}$ defined over some number field $L$ with complex multiplication by $\mathcal{O}$ such that $\mathbf{E}\pmod{\mathfrak{r}}=E$, where $\mathfrak{r}$ is a prime ideal above $r$ in $L$.  There is a group isomorphism $E[p^n]\cong \mathbf{E}[p^n]$. As $\mathbf{E}(\CC)\cong\CC/\cO$, we have, furthermore $E[p^n]\cong\cO/(p^n)$. Therefore,  under the isomorphism $\End(E)\cong\cO$, the {action of the set of aforementioned $l$-power isogenies on $E[p^n]$} can be identified with a subgroup of $(\cO/p^n)^\times$.
    
    Let $U$ be the (multiplicative) set of elements in $\mathcal{O}$ of $l$-power norm that act trivially on $E[N]$ (since we are only interested in the endomorphisms that fix $R_1$ and $R_2$). 
   Let $U_n$ be the image of $U$ in $(\mathcal{O}/p^n)^\times$. The number of connected components in $X_n$ is then given by $|\GL_2(\ZZ/p^n\ZZ)|/|U_n|$. Lemma~\ref{structure-residue} tells us that $|(\mathcal{O}/p^n)^\times|$ is given by $(p-1)^2p^{2(n-1)}$, $(p^2-1)p^{2(n-1)}$ or $(p-1)p^{2n-1}$, depending on whether $p$ is split, inert, or ramified in $\cO$. In all three cases, we have $|(\mathcal{O}/p^n)^\times|\le (p^2-1)p^{2(n-1)}$.

   If $l$ is split in $K$, then $U_n$ is generated by two elements, and $|U_n|\le|(\mathcal{O}/p^n)^\times|\le (p^2-1)p^{2(n-1)}$. If $l$ is non-split in $K$, then $U_n$ is cyclic. Thus, it follows from Corollary~\ref{cor:order-mod} that $|U_n|$ is bounded above by $(p-1)p^{n-1}$, $(p^2-1)p^{n-1}$ or $(p-1)p^n$, depending on whether $p$ is split, inert or ramified in $\cO$. In all cases, we have $|U_n|\le (p^2-1)p^{n-1}$.
  Furthermore, Corollary~\ref{cor:order-mod} tells us that the order of the generators grows linearly in $p^n$. 
   Hence, the theorem follows from Lemma~\ref{lem:order}.  
\end{proof} 

We are now ready to prove Theorem~\ref{thmA}(i).
\begin{corollary} \label{cor:ord-not-Galois}
    The covering $X_{n}/X$ is not Galois for $n$ sufficiently large.
\end{corollary}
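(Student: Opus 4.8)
The plan is to combine Theorem~\ref{thm:ordinary-components} with the Galois-obstruction criterion of Corollary~\ref{cor:not-Galois}. By Theorem~\ref{thm:iso} and Corollary~\ref{cor:cover}, the covering $X_n/X$ is (a connected component version of) a covering arising from the $\GL_2(\ZZ/p^n\ZZ)$-valued voltage assignment $\alpha_n$; more precisely, the degree of $X_l^q(p^nN)/X_l^q(N)$ is $|\GL_2(\ZZ/p^n\ZZ)|$, and by Proposition~\ref{prop:voltage}i) the components of the preimage $X_n$ of $X$ are all isomorphic and permuted transitively by $\GL_2(\ZZ/p^n\ZZ)$. Hence Corollary~\ref{cor:not-Galois} applies with $G=\GL_2(\ZZ/p^n\ZZ)$: if $X_n$ has $d_n$ connected components and $d_n!>|\GL_2(\ZZ/p^n\ZZ)|$, then $X_n/X$ is not Galois.

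The key step is then a counting comparison. By Theorem~\ref{thm:ordinary-components}, for $n$ large the number of connected components of $X_n$ is $d_n=cp^{2(n-1)}$ (if $l$ splits in the CM field $K$) or $d_n=cp^{3(n-1)}$ (otherwise), where $c\ge(p+1)p$. By Lemma~\ref{lem:order}, $|\GL_2(\ZZ/p^n\ZZ)|=p^{4(n-1)}(p^2-1)(p^2-p)$, which grows like a fixed constant times $p^{4(n-1)}$. So I would simply observe that $d_n!$ dwarfs $|\GL_2(\ZZ/p^n\ZZ)|$: since $d_n\to\infty$ as $n\to\infty$ (because $c\ge(p+1)p\ge2$ and $p\ge2$, so $d_n\ge 2^{2(n-1)}\to\infty$), and $|\GL_2(\ZZ/p^n\ZZ)|$ grows only like $p^{4(n-1)}$, which is polynomial in $d_n$ while $d_n!$ is super-exponential in $d_n$. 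Concretely, $\log d_n! \sim d_n\log d_n$, whereas $\log|\GL_2(\ZZ/p^n\ZZ)| = O(n) = O(\log d_n)$; hence $d_n!>|\GL_2(\ZZ/p^n\ZZ)|$ for all sufficiently large $n$. Applying Corollary~\ref{cor:not-Galois} to the connected component $X$ of $X_l^q(N)$ and its preimage $X_n$ then gives that $X_n/X$ is not Galois for $n$ large enough.

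There is a small bookkeeping point to address: Corollary~\ref{cor:not-Galois} is stated for the full derived graph $X(G,\alpha)$ over a connected $X$, so I should note that restricting attention to the connected component $X$ of $X_l^q(N)$ and its preimage $X_n$ is exactly the setup there — indeed $X$ is connected by definition, $X_n = X(G_n,\alpha_n|_X)$ by Theorem~\ref{thm:iso} (restricted to the edges of $X$), and the argument of Corollary~\ref{cor:not-Galois} that permuting the $d_n$ isomorphic components produces $d_n!$ deck transformations goes through verbatim. The only genuine content is the inequality $d_n! > |\GL_2(\ZZ/p^n\ZZ)|$ for $n\gg0$, and this is immediate from the asymptotics above; I do not expect any real obstacle, since the factorial on the left and the polynomial-in-$p^{n}$ on the right are wildly different in size once $d_n\ge 4$, say. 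Thus the proof is essentially a one-line deduction once Theorem~\ref{thm:ordinary-components} and Corollary~\ref{cor:not-Galois} are in hand.
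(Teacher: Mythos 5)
Your argument is correct and follows essentially the paper's route: both deduce non-Galoisness from the component count of Theorem~\ref{thm:ordinary-components} via the criterion of Corollary~\ref{cor:not-Galois}. The only difference is that the paper applies the criterion to the layer $X_n/X_{n-1}$, where the degree is $p^4$ and at least $p^2$ components of $X_n$ lie above it, so the needed inequality is the uniform $(p^2)!>p^4$, whereas you apply it directly to $X_n/X$ and instead use the (equally easy) asymptotic bound $d_n!>|\GL_2(\ZZ/p^n\ZZ)|$ for $n\gg0$.
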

\begin{proof}
    We prove that $X_{n}/X_{n-1}$ is not Galois, which in turn implies that $X_{n}/X$ is not Galois.
    
    It follows from Theorem~\ref{thm:ordinary-components} that the number of connected components of $X_{n}$ lying above $X_{n-1}$ is at least $p^2$ when $n$ is sufficiently large. Since the degree of the covering $X_{n}/X_{n-1}$ is $p^4$ by Lemma~\ref{lem:order}, it suffices to show that $(p^2)!>p^4$ in light of Corollary~\ref{cor:not-Galois}.

    Indeed,
    \[(p^2-2)(p^2-3)>\frac{4}{3}\ge\frac{1}{1-\frac{1}{p^2}}.\]
    This in turn gives
    \[
    (p^2)!\ge p^2(p^2-1)(p^2-2)(p^2-3)>p^4,
    \]
    as required.
\end{proof}

We conclude this section with the proof of Theorem~\ref{thmA}(ii):
\begin{theorem}\label{thm:abelian}
   Let $X$ be an ordinary connected component of $X_l^q(N)$, and let $K$ be the CM field of $X$. Let $X_0'=X$. For each integer $n\ge1$, let $X_n'$ be a connected component of the pre-image of $X_{n-1}'$ in $X_l^q(p^nN)$. Then $X_n'/X$ is a Galois covering. Furthermore, $\displaystyle\varprojlim_n\Gal(X_n'/X)$ is an abelian $p$-adic Lie group of dimension 2 (resp. dimension 1) if $l$ is split (resp. non-split) in $K$.
\end{theorem}
\begin{proof}
It follows from Theorem~\ref{thm:iso} and Proposition~\ref{prop:voltage} that $\GL_2(\ZZ/p^n\ZZ)$ acts transitively on the set of connected components of $X_n$. 
Let $H_n\subset \textup{GL}_2(\ZZ/p^n\ZZ)$ be the stabilizer of $X'_n$ under this action. From the orbit-stabilizer formula it follows that $X_n'/X$ is a $|H_n|$-sheeted covering. Furthermore, the action of $H_n$ on $X_n'$ gives rise to an injection $\Phi:H_n\hookrightarrow \Deck(X_n'/X)$.

Fix a vertex $v=(E,R_1,R_2,P,Q)\in V(X_n')$. 
Let $\sigma\in \textup{Deck}(X'_n/X)$. Then $\sigma(v)$ is of the form $(E,R_1,R_2,P',Q')$. As $X'_n$ is connected, and {$e$ and $\sigma(e)$ are induced by the same isogeny}, $\sigma$ is uniquely defined by $\sigma(v)$. Therefore, $\Phi$ is surjective. In particular, $X_n'/X$ is Galois.

The action of $H_n$ on $X_n'$ also sends $v$ to a vertex of the form $(E,R_1,R_2,P',Q')$. In particular, the action of an element in $H_n$ can be realized as an $l$-power isogeny from $E$ to $E$. This implies that $H_n$ is isomorphic to the group $U_n$ defined in the proof of Theorem~\ref{thm:ordinary-components}.
Note that $\displaystyle\varprojlim_n U_n$ is a subgroup of $\displaystyle\varprojlim_n(\cO/p^n)^\times$, which is a $p$-adic Lie group of dimension $2$. If $l$ is non-split in $K$, the subgroup $\displaystyle\varprojlim_n U_n$ is generated by a non-torsion elements. This implies the assertion of the theorem.

Suppose that $\ell$ is split in $K$. Let $K(p^n)$ be the ray class field over $K$ of conductor $p^n$. The Frobenii of the two primes above $\ell$ in $\Gal\left(\bigcup_n K(p^n)/K\right)$ generate a $p$-adic Lie group of dimension $2$ by class field theory. Furthermore, they induce two elements $\sigma_1,\sigma_2\in\displaystyle\varprojlim_n (\cO/p^n)^\times$, both of which have norm $l$. As $E[N]$ is a finite group, there exists an integer $m$ such that the action of $\sigma_i^m$ on $E[N]$ is trivial for $i=1$ and $2$. In particular, $\sigma_1^m,\sigma_2^m\in\displaystyle\varprojlim_n U_n$, and they generate a $p$-adic Lie group of dimension 2. In particular, this implies that $\displaystyle\varprojlim_n U_n$ itself is a $2$-dimensional $p$-adic Lie group, as desired.
\end{proof}

\section{The supersingular case}\label{sec:ss}
We now turn our attention to the supersingular case and prove Theorem~\ref{thmB}. Let $X^{\ss}$ be the supersingular subgraph of $X_l^q(N)$ and define $X^{\ss}_n$ as its pre-image in $X_l^q(p^nN)$. 
Let $E$ be any supersingular elliptic curve that gives rise to a vertex in $X$. Let $B=\textup{End}(E)\otimes \Q$. Then $B$ is a quaternion algebra, which is only ramified at $r=\textup{char}(\mathbb{F}_q)$ and $\infty$. We write $\mathrm{Nm}$ for the reduced norm map on $B$. The endomorphism ring $\textup{End}(E)$ is a maximal order in $B$. As each isomorphism class of supersingular elliptic curves has a representative defined over $\mathbb{F}_{{r}^2}$, we assume $\FF_{r^2}\subset\FF_q$ in this section. {In what follows, $\mathbb{A}_{\Q}^f$ denotes the finite part of the ring of adèles over $\QQ$.}
\begin{lemma}[{Strong approximation}]\label{lem:approximation}
    Let $G$ be the algebraic group such that $G(\Q)=B^1=\{x\in B\mid \textup{Nm}(x)=1\}$. Let $y=(y_u)_u\in G(\mathbb{A}_{\Q}^f)$ be an element such that $y_u=1$ for all $u\nmid Np$ and $y_u\in \textup{SL}_2(\Z_u)$ for $u|Np$. There exists an element $x\in G(\Q)$ that is arbitrarily close to $y_u$ at all $u\ne l$. 
\end{lemma}
\begin{proof}
    See  \cite[proof of Theorem~5.3.3]{gorenkassaei}.
\end{proof}

\begin{corollary}\label{cor:approximation}Let $C\in \textup{SL}_2(\ZZ/p^nN\ZZ)$. There exists $x\in B^1$ such that
\begin{itemize}
    \item $x$ is integral at all places  $u\ne l$,
    \item $x$ acts as $C$ on $E[p^nN]$.
\end{itemize}\end{corollary}
\begin{proof}
For each prime $u\mid p^nN$, let $C_u$ be an element of $\textup{SL}_2(\ZZ_u)$ such that $C_u\equiv C\pmod{u^{\ord_u(p^nN)}}$. 
    Let $(y_u)_u\in G(\mathbb{A}_{\Q}^f)$ be an element such that $y_u=1$ for all $u\nmid p^nN$ and that $y_u=C_u$ for all $u\mid p^nN$. By Lemma \ref{lem:approximation}, there exists $x\in G(\Q)$ that is arbitrarily close to $y$, except possibly at $l$. As $y$ is integral at all places, this implies that $x$ is also integral at all places, except possibly at $l$. As $l$ is coprime to $p^nN$, $x$ approximates the matrix $C$ at all places $u\mid p^nN$. Thus, $x$ acts as $C$ on $E[p^nN]$. 
\end{proof}
\begin{theorem}[Roda]
\label{thm:number-connected-comp}
    The number of connected components of $X^{\ss}_n$  is given by the index $[(\ZZ/p^nN\ZZ)^\times:\langle l\rangle]$. In particular, the number of connected components is uniformly bounded as $n$ varies.
\end{theorem}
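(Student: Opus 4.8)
The strategy is to count, for a fixed vertex $v = (E, R_1, R_2) \in V(X)$, the number of pre-images of $v$ in $V(X_n)$ that lie in the same connected component as a chosen base pre-image, and then invoke Remark~\ref{rem:connected-comp}, which tells us that the number of connected components of $X_n$ equals $|\GL_2(\ZZ/p^nN\ZZ)|$ divided by this count. (Strictly, $X_n/X$ is the restriction to $X$ of the $\GL_2(\ZZ/p^nN\ZZ)$-cover of $X_l^q(N)$; since $\gcd(p,N)=1$ we have $\GL_2(\ZZ/p^nN\ZZ)\cong\GL_2(\ZZ/p^n\ZZ)\times\GL_2(\ZZ/N\ZZ)$, and the $N$-part is already accounted for by passing to a connected component of $X_l^q(N)$, so effectively we work with $\alpha_n$ valued in $\GL_2(\ZZ/p^n\ZZ)$ as in Definition~\ref{def:alpha_n}.) By Remark~\ref{rem:path}, two pre-images $(E,R_1,R_2,P,Q)$ and $(E,R_1,R_2,P',Q')$ lie in the same connected component precisely when there is an $l$-power-degree isogeny $\phi\colon E\to E$ with $\phi$ fixing $R_1,R_2$ and carrying $(P,Q)$ to $(P',Q')$. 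So the set of pre-images in the component of the base point is the orbit, under the group of such endomorphisms acting on $E[p^n]$, of the base level structure; its size is the index of the relevant stabilizer data, and I must identify this group.

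\textbf{Identifying the acting group.} Here the supersingular hypothesis is used: $\End(E)$ is a maximal order $\mathcal{O}$ in the quaternion algebra $B$ ramified only at $q$ and $\infty$. Since $p\ne q$, $B$ is split at $p$, so $\mathcal{O}\otimes_{\ZZ}\ZZ_p\cong M_2(\ZZ_p)$, and the action of $\mathcal{O}$ on $T_p(E)\cong\ZZ_p^2$ identifies $\End_{\ZZ_p}(T_p(E))$ with $M_2(\ZZ_p)$. The endomorphisms of $l$-power degree are the elements of $\mathcal{O}$ whose reduced norm is a power of $l$. Crucially, an endomorphism $\phi$ acts on $E[p^n]$ through its image in $M_2(\ZZ/p^n\ZZ)$, and acts as an \emph{automorphism} of $E[p^n]$ iff its reduced norm is a $p$-adic unit — which it is, being a power of $l$. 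The point is that the $p$-adic localization $\mathcal{O}\otimes\ZZ_p = M_2(\ZZ_p)$ acts on $E[p^n]$ by the full $M_2(\ZZ/p^n\ZZ)$, but the constraint "fixes $R_1,R_2\in E[N]$" cuts this down via the $N$-adic (equivalently prime-to-$p$) behaviour. So: let $U\subset\mathcal{O}$ be the set of $l$-power-degree endomorphisms fixing $E[N]$ pointwise. I need to show that the image of $U$ in $\GL_2(\ZZ/p^n\ZZ)$ — acting on $E[p^n]$ written in the basis $\{\bar s_E,\bar t_E\}$ — is all of $\GL_2(\ZZ/p^n\ZZ)$. Equivalently, by strong approximation for the reduced-norm-$l$-power elements (the group $B^\times_{+}$ generated by such elements is dense in $(B\otimes\QQ_p)^\times$ once we also impose the prime-to-$p$ congruence, using that $B$ is indefinite away from $p$ is \emph{not} quite right — $B$ is definite — so instead I use that the set of integral elements of reduced norm a power of $l$ that are $\equiv 1$ modulo $N$ surjects onto $\GL_2(\ZZ/p^n\ZZ)$ because the localization at $p$ is $M_2(\ZZ_p)$ and $l$ generates enough of the norm group). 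After sorting this out, the orbit has size exactly $|\GL_2(\ZZ/p^n\ZZ)|/$ (something), and the "something" is the part that is \emph{not} hit — which is controlled by the $l$-power norm condition, i.e. by the subgroup $\langle l\rangle \subset (\ZZ/p^nN\ZZ)^\times$ acting through the determinant/Weil pairing.

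\textbf{Pinning down the index.} The cleanest formulation: the Weil pairing $\langle\bar s_E,\bar t_E\rangle$ is a primitive $p^n$-th root of unity, and for $\phi\in\mathcal{O}$ one has $\langle\phi P,\phi Q\rangle = \langle P,Q\rangle^{\deg\phi}$. Since $\deg\phi$ is a power of $l$, the determinant of the matrix by which $\phi$ acts on $E[p^nN]$ lies in $\langle l\rangle\subset(\ZZ/p^nN\ZZ)^\times$. Conversely (this is the content to prove), every element of $\SL_2$ together with every "allowed" determinant is realized: the image of $U$ in $\GL_2(\ZZ/p^nN\ZZ)$ is exactly $\{g : \det g\in\langle l\rangle\bmod N\text{-part trivial}\}$ — more precisely the preimage under $\det$ of $\langle l\rangle\subset(\ZZ/p^nN\ZZ)^\times$, intersected with the condition of fixing $E[N]$. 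Therefore the orbit of the base level structure has index $[(\ZZ/p^nN\ZZ)^\times : \langle l\rangle]$ inside $\GL_2$, and dividing $|\GL_2(\ZZ/p^nN\ZZ)|$ by the orbit size gives exactly $(\ZZ/p^nN\ZZ)^\times/\langle l\rangle$. The final claim about uniform boundedness follows because $(\ZZ/p^nN\ZZ)^\times \cong (\ZZ/N\ZZ)^\times\times(\ZZ/p^n\ZZ)^\times$ and, by the supersingularity (Frobenius acts as $\pm\sqrt{q}$ or a scalar on $E[p^\infty]$, forcing $l$ to generate a subgroup of $(\ZZ/p^n\ZZ)^\times$ of index bounded independently of $n$ — indeed $l$ is a unit and the relevant structure is governed by the characteristic polynomial of Frobenius, whose action makes $\langle l \rangle$ eventually of stable index), the index stabilizes.

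\textbf{Main obstacle.} The crux — and the step I expect to be hardest — is the surjectivity claim: that $l$-power-degree endomorphisms of a supersingular $E$, constrained to fix $E[N]$, realize the full preimage under the determinant of $\langle l\rangle$ in $\GL_2(\ZZ/p^n\ZZ)$ as they act on $E[p^n]$. This is a strong-approximation / local-global statement for the quaternion order $\mathcal{O}$: one must produce, for each target matrix, an actual element of $\mathcal{O}$ of the right reduced norm and the right congruence class at $p$ and at $N$. Since $B$ is definite, one cannot use classical strong approximation directly at $\infty$; instead one works $p$-adically, using $\mathcal{O}\otimes\ZZ_p\cong M_2(\ZZ_p)$ together with the fact that the left ideals of norm $l^k$ (equivalently, the $l$-isogeny graph on the supersingular set) connect everything — this is where Roda's argument and the known connectivity of supersingular isogeny graphs enters. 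I would either cite Roda's thesis \cite{thesis-roda} for the precise counting input or reconstruct it via the adelic description of supersingular points (double cosets $B^\times\backslash \widehat{B}^\times/\widehat{\mathcal{O}}^\times$) and the action of $l$-Hecke operators, tracking the determinant obstruction through the reduced norm.
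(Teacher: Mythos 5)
Your overall strategy coincides with the paper's: fix a vertex of $X$, count its pre-images lying in one connected component of $X_n$ via Remark~\ref{rem:connected-comp} and Remark~\ref{rem:path}, observe that an $l$-power-degree endomorphism acts on the level structure by a matrix whose determinant (equal to its degree, via the Weil pairing/reduced norm) lies in $\langle l\rangle$, and then show that \emph{every} matrix with determinant in $\langle l\rangle$ is so realized. The problem is that this last step — which is the entire content of the theorem — is not actually proved in your proposal. Your sketch of it is confused: you first assert that the image of the $l$-power-norm endomorphisms fixing $E[N]$ in $\GL_2(\ZZ/p^n\ZZ)$ should be \emph{all} of $\GL_2(\ZZ/p^n\ZZ)$ (which contradicts the determinant obstruction you yourself identify two sentences later), you correctly note that classical strong approximation cannot be applied naively because $B$ is definite, but then you leave the resolution as "after sorting this out", and in your final paragraph you propose to either cite Roda's thesis — which is circular, since this is precisely the statement being proved — or "reconstruct" the argument adelically without giving the reconstruction. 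So the key local-global input is missing.

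For comparison, the paper closes this gap with two concrete ingredients. First, for a matrix $C\in\textup{SL}_2(\ZZ/p^nN\ZZ)$ one applies strong approximation to the norm-one group of $B$ allowing denominators at $l$ (legitimate because $B$ is split at $l\neq q$, so the norm-one group is noncompact there, even though $B$ is definite at $\infty$): this produces $x\in B$ of reduced norm $1$, integral away from $l$, acting as $C$ on $E[p^nN]$; one then multiplies by $l^r$ where $l^r$ acts trivially on $E[p^nN]$ and $l^rx$ is integral, so that $y=l^rx\in\End(E)$ has $l$-power degree and still induces $C$. Second, to realize an arbitrary determinant in $\langle l\rangle$ one needs more than $[l]$ (which has degree $l^2$ and acts trivially); the paper invokes Goren--Kassaei's existence of an endomorphism of degree $l^r$ with $r$ odd, which combined with $[l]$ produces, for any prescribed $l$-power determinant, some matrix $D$ realized by an $l$-power-degree endomorphism, and then reduces the general case to the $\textup{SL}_2$ case via $C'=CD^{-1}$. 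Without these (or equivalent) inputs your argument does not get off the ground. Two smaller points: your justification of the uniform boundedness via the Frobenius action is beside the point — it follows simply because the index of $\langle l\rangle$ in $(\ZZ/p^n\ZZ)^\times$ stabilizes for $n$ large; and note that the paper works throughout with $\GL_2(\ZZ/p^nN\ZZ)$, allowing the $N$-part of the level structure to move as well, rather than restricting to endomorphisms fixing $E[N]$ pointwise as you do, which is what makes the stated index $[(\ZZ/p^nN\ZZ)^\times:\langle l\rangle]$ come out directly.
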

This theorem is the main result of \cite[Section 3.3]{thesis-roda}.  We outline the proof here for the convenience of the reader.
\begin{proof}
    Let $(E,R_1,R_2,P,Q)$ and $(E,R'_1,R'_2,P',Q')$ be two vertices in $X^{\ss}_n$. There is a matrix $C$ in $\textup{GL}_2(\ZZ/p^nN\ZZ)$ such that \[C\begin{pmatrix}
        R_1 +P\\R_2+Q
    \end{pmatrix}=\begin{pmatrix}R'_1+P'\\R'_2+Q'\end{pmatrix}.\] 
    In light of Remark~\ref{rem:path}, we determine when $C$ is induced by an $l$-power endomorphism. 

We claim that $C\in \textup{SL}_2(\ZZ/p^nN\ZZ)$ induces an $l$-power endomorphism on $E$ if and only if $\det(C)\in\langle l\rangle$ as an element in $(\ZZ/p^nN\ZZ)^\times$. The only if part is clear. We show the if part below.
    
    First, assume that $C\in \textup{SL}_2(\ZZ/p^nN\ZZ)$. By Corollary \ref{cor:approximation}, there exists $x\in B$ of norm one that is integral at all places except possibly at $l$ and such that $x$ acts as $C$ on $E[p^nN]$. 
    Let $t$ be a positive integer such that $l^t$ acts trivially on $E[p^nN]$ and such that $l^tx$ is integral at all places. Then $y=l^tx$ is an $l$-power endomorphism of $E$ such that $y(P+R_1)=P'+R'_1$ and $y(Q+R_2)=Q'+R'_2$.
    
    Assume now that $\det(C)$ is a power of $l$. By \cite[proof of Theorem~5.33]{gorenkassaei}, $E$ admits an endomorphism $f$ of degree $l^t$ with $t$ odd. As $[l]$ has degree $l^2$, it follows that there is a matrix $D\in \textup{GL}_2(\ZZ/p^nN\ZZ)$ induced by an $l$-power endomorphism such that $\det(C)=\det(D)$. Thus, to determine whether $C$ is induced by an $l$-power endomorphism, it suffices to consider $C'=CD^{-1}$. As $\det(C')=1$, we are reduced to the previous case. This concludes the proof of our claim. Combined with Remark~\ref{rem:connected-comp}, this implies that the number of connected components is given by $[(\ZZ/p^nN\ZZ)^\times:\langle l \rangle]$. 

    Suppose that $p\ne2$. There is a group isomorphism $$(\ZZ/p^nN\ZZ)^\times\cong (\ZZ/Np\ZZ)^\times \times (1+p\Zp)/(1+p^{n-1}\Zp).$$ Let $\omega_p$ be the Teichm\"uller character and write $\tilde l=l/\omega_p(l)\in 1+p\Zp$. Let $\overline{\langle \tilde l\rangle}$ denote the topological closure of $\langle \tilde l \rangle$ within $1+p\Z_p$. Note that the multiplicative group $1+p\Z_p$ is isomorphic to the additive group $\Zp$. As $\omega_p(l)$ is a $(p-1)$-st root of unity, it follows that $\tilde l\neq1$. In particular, the index $[1+p\Zp:\overline{\langle \tilde l\rangle}]$ is finite. Therefore, we deduce the inequality
    $$[(\ZZ/p^nN\ZZ)^\times:\langle l \rangle]\le [(\ZZ/Np\ZZ)^\times:\langle l \rangle][1+p\Zp:\overline{\langle \tilde l\rangle}],$$ where the upper bound is independent of $n$.  Thus, the number of connected components in $X_n^\ss$ is bounded independently of $n$. The case $p=2$ can be proved in a similar way.
\end{proof}

\begin{corollary}\label{cor:density}
    Assume that $p>2$.  Let $T$ be the set of primes $l$ such that $X^{\ss}_n$ is connected. Let $\cT$ be the set of elements $a\in(\ZZ/p^2N\ZZ)^\times$ such that $\langle a\rangle=(\ZZ/p^2N\ZZ)^\times$. Then the Dirichlet density of $T$ is equal to $|\cT|/\varphi(p^2N)$, where $\varphi$ is the Euler totient function. \end{corollary}
    \begin{proof}
        Theorem~\ref{thm:number-connected-comp} says that the graph $X_n$ is connected if and only if $l$ generates $(\ZZ/p^nN\ZZ)^\times$ for all $n$. This is the case if and only if $l$ generates the cyclic group $(\ZZ/p^2N\ZZ)^\times$, from which the corollary follows.
    \end{proof}
    \begin{remark}
    \label{cyclic-for-large-N}
    Note that $\cT$ is non-empty if and only if $(\ZZ/p^2N\ZZ)^\times$ is cyclic. This is the case when $p$ is odd, $\varphi(N)$ is coprime to $p(p-1)$ and $(\ZZ/N\ZZ)^\times$ is cyclic. 
     If $p$ is odd, for $\varphi(N)$ and $p(p-1)$ to be coprime, we must have $N\in\{1,2\}$. In this case, the density given by Corollary~\ref{cor:density} is $\varphi(p-1)/p$.
        If $p=2$, the group $(\ZZ/p^nN\ZZ)^\times$ is not cyclic. In particular, $X_n$ is not connected for any choice of $l$.
    \end{remark}

    \begin{corollary}
    \label{cor:connected}
        Assume that $X^{\ss}_n$ is connected for all $n$. Then $X_n^\ss/X^\ss$ is a Galois covering whose Galois group isomorphic to $\textup{GL}_2(\ZZ/p^n\ZZ)$.
    \end{corollary}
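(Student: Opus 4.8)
The plan is to deduce this immediately from Corollary~\ref{cor:Galois}. For each fixed $n\ge1$, the hypothesis that $X_n$ is connected for all $n$ gives in particular that the pre-image $X_n$ of $X$ in $X_l^q(p^nN)$ is connected. Corollary~\ref{cor:Galois} — which itself rests on the identification of $X_l^q(p^nN)$ with the derived graph $X(\GL_2(\ZZ/p^n\ZZ),\alpha_n)$ from Theorem~\ref{thm:iso} together with Proposition~\ref{prop:voltage}ii) — then yields directly that $X_n/X$ is Galois with Galois group isomorphic to $\GL_2(\ZZ/p^n\ZZ)$. So the proof is a one-line bookkeeping argument once the machinery of Section~\ref{sec:realize} is in place.

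The only substantive content is the connectivity hypothesis itself, which is exactly what Theorem~\ref{thm:number-connected-comp} and Corollary~\ref{cor:density} address: by Roda's theorem the pre-image $X_n$ is connected precisely when $l$ generates $(\ZZ/p^nN\ZZ)^\times$ for every $n$, which under the standing assumptions $N\le 2$ and $p>2$ reduces to $l$ generating the cyclic group $(\ZZ/Np^2\ZZ)^\times$. Thus there is no real obstacle to overcome in this corollary; it is a formal consequence of the voltage-graph formalism combined with Roda's count of the connected components.

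If one preferred an argument that does not route through \cite{gonet22,gonet-thesis} via Proposition~\ref{prop:voltage}, one could argue directly: by Lemma~\ref{lem:decks}, $\mathrm{Deck}(X_n/X)$ embeds into $\mathrm{Deck}(X_n'/X')$, and since $X_n$ is the full pre-image of $X$ in $X(\GL_2(\ZZ/p^n\ZZ),\alpha_n)$, the left-translation maps $(v,\sigma)\mapsto(v,g\sigma)$ for $g\in\GL_2(\ZZ/p^n\ZZ)$ preserve $X_n$ and provide $|\GL_2(\ZZ/p^n\ZZ)|$ distinct deck transformations; as the covering has degree $|\GL_2(\ZZ/p^n\ZZ)|$ by Lemma~\ref{lem:order}, this forces $X_n/X$ to be Galois with the asserted Galois group. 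In the present write-up, however, simply invoking Corollary~\ref{cor:Galois} is the cleanest route, and that is how I would present it.
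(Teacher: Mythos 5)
Your proposal is correct and matches the paper's own argument, which simply notes that the statement follows immediately from Proposition~\ref{prop:voltage}ii) (equivalently, from Corollary~\ref{cor:Galois}, whose proof is that same application). The additional remarks on when the connectivity hypothesis holds and the alternative deck-transformation count are fine but not needed for this corollary.
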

 
    \begin{proof}
        This follows immediately from Proposition~\ref{prop:voltage}ii).
    \end{proof}

Finally, we show a supersingular analogue of Theorem~\ref{thm:abelian}. That is, we show that even if the graphs $X_n^\ss$ are not connected, we may still obtain a tower of Galois coverings.

    \begin{corollary}
    \label{cor:galois-supersingular}
        There exists an integer $m_0$ such that the number of connected components of $X_n^\ss$ stabilizes. Let $Z_{m_0}$ be a connected component of $X^{\ss}_{m_0}$. For $n\ge m\ge m_0$, let $Z_n$ be the inverse image of $Z_{m_0}$ in $X_n$, and let $G_{n,m}\subset \textup{GL}_2(\ZZ/p^n\ZZ)$ be the subgroup of matrices congruent to $\begin{pmatrix}
            1&0\\0&1
        \end{pmatrix} \pmod {p^m}$. Then $Z_n/Z_m$ is Galois with $\Gal(Z_n/Z_m)\cong G_{n,m}$. In particular, the inverse limit $\displaystyle\varprojlim_{n\ge m_0} G_{n,m_0}$ is an open subgroup of $\GL_2(\Zp)$. 
    \end{corollary}
    \begin{proof}
The existence of $m_0$ is given by Theorem~\ref{thm:number-connected-comp}. 
Let $n\ge m\ge m_0$. Since the number of connected components stabilizes, $Z_n$ and $Z_m$ are connected. Furthermore, $Z_n/Z_m$ is a $\vert G_{n,m}\vert $-sheeted covering and there is a natural embedding
        \[\psi_m\colon G_{n,m}\hookrightarrow \textup{Deck}(Z_n/Z_m).\]
        It remains to show that this embedding is indeed an isomorphism.
        
        Let $\sigma\in\textup{Deck}(Z_n/Z_m)$ and let $v$ be a vertex of $Z_n$. Let $\pi \colon Z_n\to Z_m$ be the projection map. By definition
        \[\pi(\sigma(v))=\pi(v).\]
        The graph $Z_n$ is a subgraph of $X^{\ss}_n$. Therefore, there exists a vertex $v_0$ of $X^{\ss}$ and elements $h,h'\in\textup{GL}_2(\ZZ/p^n\ZZ)$ such that $v=(v_0,h)$ and $\sigma(v)=(v_0,h')$. As $\pi(\sigma(v))=\pi(v)$, the elements $h$ and $h'$ have the same image in $\textup{GL}_2(\ZZ/p^m\ZZ)$. In particular, there exists an element $g\in G_{n,m}$ such that $h'=gh$, which is equivalent to $g(v)=\sigma(v)$. We show in the following that the action of $g$ on $Z_n$ coincides with that of $\sigma$.
        
        Let $w$ be a vertex of $Z_n$ that admits an edge whose source and target are $v$ and $w$, respectively. We need to show that $\sigma(w)=g(w)$.  
       For each edge from $\pi(v)$ to $\pi(w)$, we fix an isogeny $\phi$ that induces this edge. Each such isogeny induces exactly one edge in $Z_n$ from $\sigma(v)$ to some $w'\in V(Z_n)$ such that $\pi(w)=\pi(w')$. 
        As $\sigma$ and $g$ are deck transformations of $Z_n/Z_m$, we have $\sigma(w)=g(w)=w'$. Since $Z_n$ is strongly connected by Remark~\ref{rk:connected}, this property propagates to all vertices of $Z_n$. Thus, the actions of $\sigma$ and $g$ agree as desired, proving that $\psi_m$ is indeed an isomorphism.
 \end{proof}

\begin{remark}
\label{rem:subgroup}
The inverse limit $\displaystyle\varprojlim_{n\ge m_0} G_{n,m_0}$ is equal to $\GL_2(\Zp)$ if and only if $m_0=0$, which is equivalent to $p$ being odd and $[(\ZZ/Np^2\ZZ)^\times:\langle \ell\rangle]=[(\ZZ/N\ZZ)^\times :\langle \ell\rangle]$. The last condition is satisfied for all $\ell$ such that
\begin{itemize}
    \item $\ell \equiv 1\pmod N$
    \item $\ell$ generates $\ZZ/p^2\ZZ$.
\end{itemize}
The density of the set of primes that satisfy these conditions is equal to $\displaystyle\frac{\varphi(p-1)}{\varphi(N)p}$.
\end{remark}

\section{An abelian $p$-covering arising from quotients of $X_l^q(p^nN)$}
Let $\alpha_n$ be the voltage assignment given in Definition~\ref{def:alpha_n}. Let $\beta_n=\det\circ\alpha_n$. Then $\beta_n$ is a voltage assignment with values in $(\ZZ/p^n\ZZ)^\times$. Lemma~\ref{lem:intermediate} tells us that the derived graph $X((\ZZ/p^n\ZZ)^\times,\beta_n)$ is an intermediate covering of $X(\textup{GL}_2(\ZZ/p^n\ZZ),\alpha_n)/X_l^q(N)$. We study this covering in detail in this section.

Recall from Section~\ref{sec:realize} that we fix a basis $\{s_E,t_E\}$ of the Tate module $T_p(E)$ for each $E\in S$. Let 
\[\langle \cdot, \cdot\rangle \colon T_p(E)\times T_p(E)\longrightarrow \Zp(1)\]
be the Weil pairing. In this section, we make the additional assumption that the basis is chosen so that $\langle s_E,t_E \rangle$ is a fixed topological generator $\xi$ of $\Zp(1)$ for all $E\in S$. 
Let $\xi_{n}$ denote the primitive $p^n$-th root of unity given by the image of $\xi$ under the natural map $\Zp(1)\rightarrow \mu_{p^n}$.

\begin{defn}\label{def-zp-graph}We define a directed graph $Y_l^q(p^nN)$ as follows.
    The set of vertices of $Y_l^q(p^nN)$ is given by tuples $(E,R_1,R_2,\zeta)$, where $E,R_1,R_2$ are as before and $\zeta$ is a primitive $p^n$-th root of unity.
Let $\Psi_n$ be the following surjection
\[\Psi_n\colon V(X_l^q(N))\to V(Y_l^q(p^nN)), \quad (E,R_1,R_2,P,Q)\mapsto (E,R_1,R_2,\langle P,Q\rangle).\]
For every vertex $v\in V(Y_l^q(p^nN))$, we fix a pre-image $v'\in V(X_l^q(p^nN))$ under $\Psi_n$. 
 The edges of  $Y_l^q(p^nN)$ are given as follows: 
 for every edge from $v'$ to some $w''\in \Psi_n^{-1}(w)$ in $X_l^q(p^nN)$, we draw an edge from $v$ to $w$ in $Y_l^q(p^nN)$.
\end{defn}

Given $(E,R_1,R_2,\zeta)\in V(Y_l^q(p^nN))$, we may write $\zeta=\xi_n^a$ for a unique $a\in(\ZZ/p^n\ZZ)^\times$. We will often denote $(E,R_1,R_2,\zeta)$ by $(E,R_1,R_2,a)$.

\begin{proposition} \label{prop:iso}
The graph $Y_l^q(p^nN)$ is isomorphic to $X((\ZZ/p^n\ZZ)^\times\beta_n)$. In particular, $Y_l^q(p^nN)$ is the quotient of $X_l^q(p^nN)$ by the action $\textup{SL}_2(\Z/p^n\ZZ)$.
\end{proposition}
\begin{proof}
    By definition, there is a natural bijection
    \[\Theta_n\colon V(Y_l^q(p^nN))\longrightarrow V(X(X_l^q(N),\beta_n)).\] Thus, it remains to show that $\Theta_n$ respects the edges of the two graphs, as in the proof of Theorem~\ref{thm:iso}.
    
    Let $v=(E,R_1,R_2,a)$ and $w=(E',R'_1,R'_2,a')$ be two vertices of $Y_l^q(p^nN)$.  Let $v'$ be the fixed pre-image in $X_l^q(p^nN)$ under $\Psi_n$ as in Definition~\ref{def-zp-graph}. There are $k$ edges from $v$ to $w$ if and only if there are $k$ edges from $v'$ to a vertex $w''$ for some $w''\in \Psi_n^{-1}(w)$ {(note that the vertex $w''$ can depend on the edge $e$)}.
    
    Theorem~\ref{thm:iso} allows us to identify $V(X_l^q(p^nN))$ with $V(X_l^q(N))\times\GL_2(\ZZ/p^n\ZZ)$. Two bases of $E[p^n]$ have the same image in $\mu_{p^n}$ under the Weil pairing if and only if the corresponding matrices in $\GL_2(\ZZ/p^n\ZZ)$ have the same determinant. This implies the existence of $\sigma,\sigma'\in \textup{GL}_2(\ZZ/p^n\ZZ)$ such that $a=\det(\sigma)$, $a'=\det(\sigma')$ and that
    \begin{align*}
        \Psi_n^{-1}(v)&=\left\{(E,R_1,R_2,\sigma\tau)\mid \tau\in \textup{SL}_2(\ZZ/p^n\ZZ)\right\},\\
        \Psi_n^{-1}(w)&=\left\{(E',R'_1,R'_2,\sigma'\tau)\mid \tau\in \textup{SL}_2(\ZZ/p^n\ZZ)\right\}.
    \end{align*}
    
    Without loss of generality, we may assume that the chosen pre-image of $v$ is given by $v'=(E,R_1,R_2,\sigma)$.
    According to the definition of $X(\textup{GL}_2(\ZZ/p^n\ZZ),\alpha_n)$, there is a $1$-to-$1$ correspondence between the edges from $v$ to some $w''\in \Psi_n^{-1}(w)$ and tuples $(e,\tau)$, where $e$ is an edge from $(E,R_1,R_2)$ to $(E',R_1',R_2')$, and $\tau\in \textup{SL}_2(\ZZ/p^n\ZZ)$ is such that
    \[\sigma'\tau=\sigma \alpha_n(e).\]
    The existence of $\tau$ is equivalent to 
    \[a'=\det(\sigma')=\det(\sigma)\det(\alpha_n(e))=a\beta_n(e).\]
    This condition is equivalent to the existence of an edge from $(E,R_1,R_2,a)$ to $(E',R'_1,R'_2,a')$ in $X((\ZZ/p^n\ZZ)^\times,\beta_n)$. Therefore, there is a $1$-to-$1$ correspondence between the tuples $(e,\tau)$ and the edges in  $X((\ZZ/p^n\ZZ)^\times,\beta_n)$ with $v$ as the source. It follows that $\Theta_n$ respects the edges of $Y_l^q(p^nN)$ and $X((\ZZ/p^n\ZZ)^\times,\beta_n)$, as desired. The claim that $Y_l^q(p^nN)$ is a quotient of $X_l^q(p^nN)$ is a direct consequence of the isomorphism $Y_l^q(p^nN)\cong X(X_l^q(N),\beta_n)$ and the definition of voltage graphs.
\end{proof}
\begin{remark}
   Let $\phi:E\rightarrow E'$ be an $\ell$-isogeny and let $P,Q$ be points of order $p^n$ on $E$. The Weil pairing satisfies
   \[\langle \phi(P),\phi(Q)\rangle=\langle P,Q\rangle^\ell.\]
Thus, the graphs $Y_n^\ell(p^nN)$ can be realized as the derived graph of the voltage assignment that assigns $\ell$ to each edge of $X_\ell^q(N)$. We thank one of the two referees for this observation.
\end{remark}
\begin{remark}
    Proposition~\ref{prop:iso} tells us that the graph $Y_l^q(p^nN)$ is independent of the choice of pre-image $v'\in V(X_l^q(p^nN))$ of $v\in V(Y_l^q(p^nN))$ when $N$ is sufficiently large.
\end{remark}

\begin{theorem}
Let $Y_0$ be a connected component of $Y_\ell^q(N)$, Then, the number of connected components of the pre-image of $Y_0$ in $Y_\ell^q(p^nN)$ is uniformly bounded in $n$.
\end{theorem}
\begin{proof} Let $u$ be the minimal non-negative integer such that $l^u\equiv 1\pmod N$. Since $[l^u]$ is  an $l$-power endomorphism on $E$, it follows from Remark~\ref{rem:path} that there is a path from $(E,R_1,R_2,a)$ to $(E,R_1,R_2,l^{2u}a)$ in $Y_l^q(p^nN)$  for all  choices of $(E,R_1,R_2,a)$. Furthermore,  Remark~\ref{rem:connected-comp} tells us that the number of connected components of $Y_l^q(p^nN)$ is bounded by $\vert (\ZZ/p^n\ZZ)^\times /\langle l^{2u}\rangle\vert$, which is uniformly bounded in $n$, as in the proof of Theorem~\ref{thm:number-connected-comp}. Hence, the theorem follows.
\end{proof}

We are now ready to prove Theorem~\ref{thmC}.
\begin{corollary}\label{cor:Zp-tower}  Let $m_0$ be an integer such that the number of connected components of $Y_l^q(p^nN)$, $n\ge m_0$, stabilizes. Let $n\ge m\ge m_0$. Let $Y_m$ be a connected component of $Y_\ell^q(p^mN)$ and let $Y_n$ be its pre-image in $Y_\ell^q(p^nN)$. Then $Y_n/Y_m$ is Galois and $\Gal(Y_n/Y_m)$ is isomorphic to $$\cG_{n,m}:=\left\{x\in(\ZZ/p^n\ZZ)^\times:x\equiv 1\pmod {p^m}\right\}.$$
\end{corollary}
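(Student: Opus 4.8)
The plan is to mimic the argument for the analogous statement about $X_n/X_m$ in the supersingular case (the unnamed corollary at the end of Section~\ref{sec:ss}), working now with the abelian group $(\ZZ/p^n\ZZ)^\times$ in place of $\GL_2(\ZZ/p^n\ZZ)$ and using Proposition~\ref{prop:iso} to identify $Y_l^q(Np^n)$ with the derived graph $X(X_l^q(N),\beta_n)$. First I would record the elementary group-theoretic facts: since $n>m$, the kernel of the projection $(\ZZ/p^n\ZZ)^\times\to(\ZZ/p^m\ZZ)^\times$ is exactly $\cG_{n,m}$, and $|\cG_{n,m}|=p^{n-m}$; moreover $\beta_m=\det\circ\alpha_m$ is the composition of $\beta_n$ with this projection, so that $X(X_l^q(N),\beta_m)$ is an intermediate covering of $X(X_l^q(N),\beta_n)/X_l^q(N)$, exactly as in the first paragraph of the proof of Corollary~\ref{cor:cover}. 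Consequently $Y_l^q(Np^n)/Y_l^q(Np^m)$ is a covering of degree $p^{n-m}=|\cG_{n,m}|$, and the natural action $x\cdot(v,\sigma)=(v,x\sigma)$ of $\cG_{n,m}$ gives an injective homomorphism $\psi_m\colon\cG_{n,m}\hookrightarrow\Deck(Y_n/Y_m)$. So $Y_n/Y_m$ is automatically a $|\cG_{n,m}|$-sheeted covering and it remains to prove the Galois statement, i.e. that $\Deck(Y_n/Y_m)$ has at least $|\cG_{n,m}|$ elements, for which it suffices to see that the restrictions of the $\psi_m(x)$ to the connected component $Y_n$ remain distinct deck transformations of $Y_n/Y_m$.

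The connectedness input comes from the preceding theorem: by hypothesis $n,m\ge m_0$, so the number of connected components of $Y_l^q(Np^n)$ has stabilized. As in Remark~\ref{rem:connected-comp}, all connected components of the derived graph are isomorphic and $\cG_{n,m}$ (indeed all of $(\ZZ/p^n\ZZ)^\times/\langle\text{stuff}\rangle$, but here we only need $\cG_{n,m}$) permutes them; since the component count of $Y_l^q(Np^n)$ equals that of $Y_l^q(Np^m)$ — both having stabilized at the same value for $n,m\ge m_0$ — the subgroup $\cG_{n,m}\le(\ZZ/p^n\ZZ)^\times$, which acts trivially modulo $p^m$ and hence preserves the fibre over any fixed component $Y_m$, must act \emph{transitively} on the set of components of $Y_l^q(Np^n)$ lying above $Y_m$. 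A counting argument then shows this fibre consists of exactly $p^{n-m}/d$ components where $d=|V(Y_n)|/|V(Y_m)|$ is the degree of $Y_n/Y_m$, and comparing with $|\cG_{n,m}|=p^{n-m}$ forces $Y_n$ to be connected; alternatively, and more cleanly, I would first prove $Y_n$ is connected by this stabilization argument and then invoke Proposition~\ref{prop:voltage}ii) directly — once $Y_n$ is connected, $Y_n/Y_m$ is Galois with Galois group the relevant quotient group, which one identifies as $\cG_{n,m}$.

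Here is the cleaner route I would actually write. Step 1: reduce to $m=m_0$ by noting the tower is compatible, or simply argue for general $n>m\ge m_0$ at once. Step 2: show $Y_n$ (a connected component of $Y_l^q(Np^n)$ above the fixed component $Y_m$ of $Y_l^q(Np^m)$) is connected over $Y_m$; this is where I use that the number of components is the same at level $n$ and level $m$, together with the transitivity of the $(\ZZ/p^n\ZZ)^\times$-action on components of the derived graph (Proposition~\ref{prop:voltage}i)) — the fibre of $Y_n\to Y_m$ over a point, having cardinality $p^{n-m}$, cannot break into more than one component without increasing the total component count. Step 3: with $Y_n$ connected, apply Proposition~\ref{prop:voltage}ii) to the voltage assignment $\beta_n|$ valued in $\cG_{n,m}$ obtained by the reduction argument of Corollary~\ref{cor:cover} (i.e., $Y_n$ is the derived graph over $Y_m$ for a $\cG_{n,m}$-valued voltage assignment), concluding $Y_n/Y_m$ is Galois with group $\cG_{n,m}$. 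Finally, identify $\cG_{n,m}=\ker\big((\ZZ/p^n\ZZ)^\times\to(\ZZ/p^m\ZZ)^\times\big)=\{x\equiv1\pmod{p^m}\}$, which gives the stated description and, specializing to $m=0$, recovers $\Gal(Y_n/Y_0)\cong(\ZZ/p^n\ZZ)^\times\cong\ZZ/p^n\ZZ$ for $p$ odd, as in Theorem~\ref{thmC}. The main obstacle is Step 2: carefully deducing connectedness of the fibre from the stabilization of the component count, rather than assuming it; this requires knowing that the components above $Y_m$ are permuted transitively by $\cG_{n,m}$ and a clean bookkeeping of how many there are, which is exactly the content of Remark~\ref{rem:connected-comp} applied in this relative setting.
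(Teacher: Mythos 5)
Your proposal reaches the right conclusion and shares the paper's setup (the identification $Y_l^q(Np^n)\cong X(X_l^q(N),\beta_n)$ from Proposition~\ref{prop:iso}, the injection $\cG_{n,m}\hookrightarrow\Deck(Y_n/Y_m)$, the degree count), but it handles the key surjectivity step differently. The paper argues directly: any $\omega\in\Deck(Y_n/Y_m)$ agrees at a chosen vertex with the action of some $g\in\cG_{n,m}$, and because $Y_l^q(N)$ has no multiple edges when $N>C_q$, this agreement propagates along every edge of the (strongly connected) component, so $\omega$ coincides with $g$ everywhere. You instead (i) use the stabilized component count to show that the full preimage of $Y_m$ in $Y_l^q(Np^n)$ is connected, hence equals $Y_n$ and covers $Y_m$ with degree exactly $p^{n-m}$, and then (ii) want to view $Y_n$ as a connected derived graph over $Y_m$ for a $\cG_{n,m}$-valued voltage assignment and invoke Proposition~\ref{prop:voltage}ii). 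Step (i) is sound (the induced map on component sets is surjective and the counts at levels $n$ and $m$ agree, so exactly one component lies over $Y_m$), and it usefully makes explicit what the paper compresses into ``Clearly, $Y_n/Y_m$ is a $\vert\cG_{n,m}\vert$-sheeted cover.''

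Two points must be repaired before your route is complete. First, the assertion in your Step 3 that $Y_n$ is the derived graph of a $\cG_{n,m}$-valued assignment on $Y_m$ is \emph{not} contained in Corollary~\ref{cor:cover}: that argument only produces the quotient covering $X(G,\alpha)\to X(G/N,\tilde\alpha)$, not a voltage structure of the top graph over the intermediate one. The fact you need is true here and easy because the group is abelian: pick a set-theoretic section $s$ of $(\ZZ/p^n\ZZ)^\times\to(\ZZ/p^m\ZZ)^\times$ and check that $\gamma\bigl((e,\bar g)\bigr)=\beta_n(e)\,s(\bar g)\,s\bigl(\bar g\beta_m(e)\bigr)^{-1}\in\cG_{n,m}$ does the job -- but you must supply this verification. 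Alternatively you can bypass it: for a connected cover the deck group acts freely on a fibre, so $\vert\Deck(Y_n/Y_m)\vert\le p^{n-m}=\vert\cG_{n,m}\vert$, and your injection gives equality, hence Galois with group $\cG_{n,m}$. Second, your appeal to Proposition~\ref{prop:voltage}i) for transitivity on components is not licensed as stated, since the base $X_l^q(N)$ need not be connected; fortunately your counting argument never needs transitivity, only surjectivity of the map on components plus equality of the counts. Finally, the closing aside $(\ZZ/p^n\ZZ)^\times\cong\ZZ/p^n\ZZ$ is false (its order is $p^{n-1}(p-1)$); this is harmless for the corollary, since Theorem~\ref{thmC} concerns $\cG_{n+m_0,m_0}$ with $m_0\ge1$, but it should not be stated.
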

\begin{proof}
    Clearly, $Y_n/Y_m$ is a $\vert \cG_{n,m}\vert$-sheeted covering and $Y_n$ is connected. Via the voltage assignment $\beta_n$, we see that there is an injective group homomorphism
    \[\cG_{n,m}\hookrightarrow \textup{Deck}(Y_n/Y_m).\]
    It suffices to show that this map is surjective. 
    
    Let $\tau\in\Deck(Y_n/Y_m)$ and $v=(E,R_1,R_2,a)\in V(Y_n)$. Let $v'=(E,R_1,R_2,\sigma)\in X_l^q(p^nN)$ be the fixed pre-image under the map $\Psi_n$ given in Definition~\ref{def-zp-graph}. Then there exists  $g\in \cG_{n,m}$ such that $gv=\tau (v)$. Let $g'$ be a lift of $g$ to $\textup{GL}_2(\ZZ/p^n\ZZ)$ (so that $\det(g')=g$). In particular,
    \[\Psi_n(E,R_1,R_2,g'\sigma)=gv.\] 
    Let $\phi$ be an $l$-isogeny on $E$ and let $w=\phi(v)$. Let 
    \[w'=(\phi(E),\phi(R_1),\phi(R_2),\sigma \alpha_n(e_\phi)),\]
    where $e_\phi$ is the edge induced by $\phi$. By definition,
    \[w=\Psi_n(w')=(\phi(E),\phi(R_1),\phi(R_2), a\beta_n(e_\phi)).\] 
    By definition, we see that $\tau(w)=\phi(\tau(v))$. Furthermore,
    \begin{align*}
    \tau(w)&=\phi(\tau(v))=\phi(g v)=\Psi_n(\phi (g' v'))=\Psi_n(\phi(E,R_1,R_2,g'\sigma))\\&=\Psi_n(\phi(E),\phi(R_1),\phi(R_2), g'\sigma \alpha_n(e_\phi))=(\phi(E),\phi(R_1),\phi(R_2), g a \beta_n(e_\phi))\\&=gw.
    \end{align*}
    Thus, $\tau$ is uniquely determined by $\tau(v)$. Hence,  $\cG_{n,m}\cong \textup{Deck}(Y_n/Y_m)$, which concludes the proof.
\end{proof}

In particular, we see that 
\[
Y_{m_0}\leftarrow Y_{m_0+1}\leftarrow Y_{m_0+2}\leftarrow \cdots \leftarrow Y_{m_0+n}\leftarrow \cdots
\]
form an abelian $p$-tower in the sense of \cite[Definition~4.1]{vallieres}.

\begin{example}
    Let $E$ be an elliptic curve such that all $l$-isogenies on $E$ are endomorphims (e.g. when $E$ is supersingular defined over $\mathbb{F}_{13^2}$). Suppose that $N=1$. Let $\phi:E\to E$ be an $l$-isogeny and let $e\in \EE(X_l^q(1))$ be the edge induced by $\phi$.  We have
    \[\langle \phi(s_E),\phi(t_E)\rangle=\langle \widehat{\phi}\circ \phi (s_E),t_E\rangle=\langle s_E,t_E\rangle ^l.\]
    Therefore $\beta_n(e)=l_n$, where $l_n$ denotes the image of $l$ in $(\ZZ/p^n\ZZ)^\times$. In this case, the graph $X_l^q(1)$ has only one vertex with $l+1$ loops (in other words, it is a bouquet). The voltage assignment $\beta_n$ assigns the same value $l_n$ to every edge. Let $Y_0$ be the connected component of $Y_l^q(1)$ that contains $E$. It follows that the tower
    \[Y_{0}\leftarrow Y_{1}\leftarrow Y_{2}\leftarrow \cdots \leftarrow Y_{n}\leftarrow \cdots\]
    is a $\Z_p$-tower with $\lambda$-invariant \begin{align*}\lambda(2(l+1)(2-(T+1)-(T+1)^{-1}))-1=\lambda(2(l+1)T^2)-1=1\end{align*} and $\mu$-invariant \[\mu(2(l+1)(2-(T+1)-(T+1)^{-1}))=\mu(2(l+1)T^2)=\ord_p(2(l+1))\] 
    (see \cite[Definition 2.11]{DLRV}).

 Assume that $p$ does not divide $2(l+1)$ and that $l$ generates $\ZZ/p^2\ZZ$.  Corollary \ref{cor:connected} implies that $X_n^\ss/X^\ss$ is a $\textup{GL}_2(\ZZ/p^n\ZZ)$-covering, and the $\mu$-invaraint of the $\Zp$-tower $(Y_n)_{n\ge0}$ vanishes. Therefore, the $\textup{GL}_2(\Z_p)$-tower
 \[X_0^\ss\leftarrow X_1^\ss\leftarrow X_2^\ss\leftarrow \cdots \leftarrow X_n^\ss\leftarrow \cdots \]
satisfies the so-called $\mathfrak{M}_H(G)$-property and the $K$-theoretic Iwasawa main conjecture holds (see \cite[Sections 7 and 8]{KM} for details).
\end{example}

\section{Isogeny graphs of oriented elliptic curves and Volcano graphs}\label{S:orientation}
Let $M$ be a positive integer coprime to $ql$. While the structure of an $l$-isogeny graph of ordinary elliptic curves equipped with a $\Gamma_1(M)$-level structure is related to the so-called (tectonic) volcano graphs (see \cite[\S4]{LM1}), analogous results for the supersingular counterpart are not available. In \cite{colokohel}, Col\`o--Kohel introduced the concept of orientations of supersingular elliptic curves. The authors in \cite{onuki,arpin-et-all,arpin-win} showed that the isogeny graph of supersingular elliptic curves equipped with an orientation (without any level structures) is a volcano graph, analogous to the ordinary counterpart studied in \cite{kohel}. In this section, we review the notion of orientations and describe the structure of the corresponding isogeny graphs enhanced with a $\Gamma(M)$-level structure for $M>2$. 

Throughout this section, we fix an elliptic curve $E_0$ and assume that one of the following two conditions holds:
\begin{itemize}
\item[(\mylabel{sup}{\textbf{H.ss}})]    $E_0/\FF_{q}$  is supersingular, where $q$ is an even power of a prime number $r$ (so that $\FF_{r^2}\subset\FF_q$) and $r\equiv 1\pmod{12}$.  
    \item[(\mylabel{ord}{\textbf{H.ord}})]  $E_0/\FF_{q}$ is ordinary and $\textup{End}(E_0)\otimes \Q$ does not contain $i=\sqrt{-1}$ and $\zeta_3$, where $\zeta_3$ denotes a primitive $3$-rd root of unity. 
\end{itemize} In both settings, we have $\textup{Aut}(E_0)=\{\pm 1\}$. Throughout, $B$ denotes the quaternion algebra that is ramified only at $r$ and $\infty$.

\subsection{The $l$-isogeny graph of oriented elliptic curves}
We recall the notion of oriented elliptic curves and their isogeny graphs, originally introduced in \cite{colokohel}. We fix once and for all an imaginary quadratic field that admits an embedding
\[
\iota:K\hookrightarrow \End(E_0)\otimes\Q.
\]
If \eqref{sup} holds, $r$ does not split in $K$ and $\iota$ can be realized as an embedding of $K$ into $B$. If \eqref{ord} holds, $\iota$ is an isomorphism of fields.

\begin{defn}\label{def:oriented-elliptic} Let $E/\FF_{q}$ be an elliptic curve that admits an embedding $\iota:K\hookrightarrow\End(E)\otimes \QQ$.
\item[i)] We call the pair $(E,\iota)$ an \textbf{oriented elliptic curve}. 
\item[ii)]     We call an order $\mathcal{O}$ of $K$ \textbf{primitive} for $(E,\iota)$, if $\iota(\mathcal{O})=\iota(K)\bigcap \textup{End}(E)$. 
\item[iii)] We say that $\mathcal{O}$ is $l$-\textbf{primitive} if $l$ does not divide $[\iota(K) \bigcap \textup{End}(E):\iota(\mathcal{O})]$.
\item[iv)] If $\varphi\colon E\to E'$ is an isogeny of elliptic curves, we define the orientation $\varphi_*\iota$ on $E'$ by
\begin{align*}
    \varphi_*\iota:K&\rightarrow\End(E')\otimes \QQ,\\
    \alpha&\mapsto\frac{1}{\deg(\varphi)}{\varphi}\circ\iota(\alpha)\circ\widehat\varphi.
\end{align*}
\item[v)]    An \textbf{isogeny of oriented elliptic curves} $\varphi\colon (E,\iota)\to (E',\iota')$ is an isogeny of elliptic curves $\varphi\colon E'\to E$ such that $\varphi_*\iota=\iota'$.
\item[vi)] We say that $l$-isogenies $\phi,\psi\colon E\to E'$  are \textbf{equivalent isogenies} if they have the same kernel.
\item[vii)] We say that  $(E,\iota)$ and $(E,\iota')$ are \textbf{equivalent  oriented elliptic curves} if there exists an $\overline{\FF_q}$-isomorphism of elliptic curves $\phi\colon E\to E'$ such that $\iota'=\phi_*\iota$. 
\end{defn}

\begin{remark}
    Let $\phi,\psi\colon E\to E'$ be equivalent isogenies. Then $(\phi(E),\phi_*\iota)$ and $(\psi(E),\psi_*\iota)$ are equivalent.
    
In the ordinary case, there are precisely two orientations on $E$, given by the field automorphisms of $K$.  In addition, we can check that $\phi_*\iota=\iota$ for every endomorphism $\phi$ of $E$.
\end{remark}

We are now ready to define the $l$-isogeny graph of oriented elliptic curves.

\begin{defn}
If \eqref{sup} (resp. \eqref{ord}) holds, we define $\cX_l^q$ as the directed graph whose vertices are equivalence classes of oriented elliptic curves $(E,\iota)$ defined over $\FF_q$ that are supersingular (resp. ordinary with $\End(E)\otimes\QQ\cong K$); the edges are given by $l$-isogenies up to equivalence. We define $\widetilde\cX^q_l$ as the undirected graph obtained from $\cX_l^q$ after identifying the edges induced by an isogeny and its dual.
\end{defn}

The following notions will allow us to describe the structure of $\cX_l^q$.
\begin{defn}
Let $\varphi:(E,\iota)\rightarrow(E',\iota')$ be an isogeny of oriented elliptic curves.  Assume that $\mathcal{O}$ and $\cO'$ are orders of $K$ that are primitive for $(E,\iota)$ and  $(E',\iota')$, respectively. We say that $\varphi$ is \begin{itemize}
        \item {\bf horizontal} if $\mathcal{O}=\mathcal{O}'$,
        \item {\bf descending} if $\mathcal{O}'\subset \mathcal{O}$,
        \item {\bf ascending} if $\mathcal{O}'\supset \mathcal{O}$.
    \end{itemize}
   \end{defn}
The notion of horizontal, ascending, and descending isogenies allows us to define the depth of an oriented elliptic curve.
\begin{defn}
    Let $(E,\iota)$ be an oriented elliptic curve. Assume that $\mathcal{O}$ is primitive with respect to $(E,\iota)$. We say that $(E,\iota)$ is of depth zero if the conductor of $\mathcal{O}$ is not divisible by $l$. We say that $(E,\iota)$ is of depth $k$ if there is a path consisting of $k$ descending $l$ -isogenies that starts at a depth-zero oriented elliptic curve $(E',\iota')$ and ends at $(E,\iota)$ . 
\end{defn}

Finally, we introduce the following notions of graphs, which are the last ingredients in our description of $\cX_l^q$.

\begin{defn}\label{def:volcano}
\item[i)] An \textbf{abstract crater} is either a directed cycle graph or a totally disconnected finite graph.
\item[ii)]      An \textbf{infinite volcano graph} $G$ is a directed graph whose vertices may be decomposed as $V(G)=\bigcup_{i=0}^\infty V_i$ such that
\begin{itemize}
    \item The out-degree of every vertex is $l+1$;
    \item The subgraph generated by $V_0$ is  an abstract crater;
    \item  If there is an edge between two vertices $v_i\in V_i$ and $v_j\in V_j$ then $i-j\in \{\pm 1\}$ or $i=j=0$;
    \item Each $v\in V_i$ with $i\ge 1$ has exactly one edge starting at $v$ and ending at a vertex in $V_{i-1}$ and $l$ edges starting at $v$ and ending at vertices in $V_{i+1}$.
\end{itemize} 
\item[iii)]     An \textbf{infinite undirected volcano} $G$ is an undirected $(l+1)$-regular graph whose vertices can be decomposed as $V(G)=\bigcup_{i=0}^\infty V_i$ such that
\begin{itemize}
    \item The subgraph generated by $V_0$, called the \textbf{crater} of $G$, is an undirected cycle graph or totally disconnected;
    \item If there is an edge between $v_i$ and $v_j$ then $i-j\in \{\pm 1\}$ or $i=j=0$;
    \item Each $v\in V_i$ with $i\ge 1$ has exactly one neighbour in $V_{i-1}$ and $l$ neighbours in $V_{i+1}$. 
\end{itemize} 
\item[iv)] A \textbf{finite volcano graph} $G$ is a directed graph whose vertices may be decomposed as $V(G)=\bigcup_{i=0}^kV_i$ such that
\begin{itemize}
    \item The subgraph generated by $V_0$ is  an abstract crater;
    \item  If there is an edge between two vertices $v_i\in V_i$ and $v_j\in V_j$ then $i-j\in \{\pm 1\}$ or $i=j=0$;
    \item Each vertex in $V_{i}$ for $1\le k-1$ has out degree $(l+1)$.
    \item Each $v\in V_i$ with $1\le i\le k-1$ has exactly one edge starting at $v$ and ending at a vertex in $V_{i-1}$ and $l$ edges starting at $v$ and ending at vertices in $V_{i+1}$.
    \item Each vertex in $v\in V_k$ has out-degree $1$. This edge ends in $V_{k-1}$. 
\end{itemize} 
\item[v)] A \textbf{finite undirected volcano graph} $G$ is an undirected graph whose vertices may be decomposed as $V(G)=\bigcup_{i=0}^k V_i$ such that
\begin{itemize}
    \item The subgraph generated by $V_0$, called the \textbf{crater} of $G$, is an undirected cycle graph or totally disconnected ;
    \item If there is an edge between $v_i$ and $v_j$ then $i-j\in \{\pm 1\}$ or $i=j=0$;
    \item Each vertex in $V_i$ for $i\le k-1$ has degree $l+1$.
    \item Each $v\in V_i$ with $1\le i\le k-1$ has exactly one neighbour in $V_{i-1}$ and $l$ neighbours in $V_{i+1}$. 
    \item Each $v\in V_i$ for $i\le k-1$ has degree $(l+1)$.
    \item Each $v\in V_k$ has exactly one neighbour, which lies in $V_{k-1}$.
\end{itemize} 
\end{defn}

\begin{theorem}[{\cite[Section 3.4]{arpin-et-all}}, {\cite[Section 3]{pazuki}}]\label{thm:pazuki}  \label{thm:arpin} If \eqref{sup} holds, the connected components of $\widetilde\cX_l^q$ are infinite undirected volcano graphs whose craters are generated by the depth zero vertices. 
If \eqref{ord} holds, $\widetilde\cX_l^q$ is finite. Its connected components are finite undirected volcanoes whose craters are the depth zero vertices. 
\end{theorem}
\begin{remark}
The description given by Theorem~\ref{thm:arpin} in fact also describes the directed graph $\cX_l^q$ since each undirected edge of $\widetilde\cX_l^q$ represents two edges of $\cX_l^q$ in opposite directions.
\end{remark}

\subsection{The $l$-isogeny graph of oriented elliptic curve with level structure}

The goal of this section is to study oriented elliptic curves enhanced with a $\Gamma(M)$-level structure. In other words, instead of pairs $(E,\iota)$, we consider tuples of the form $(E,\iota, R_1,R_2)$, where $\{R_1,R_2\}$ is a basis for $E[M]$. 
\begin{defn}
\label{defn:isogeny-oriented-level}
    Let $M$ be a positive integer. An isogeny of oriented elliptic curves with a $\Gamma(M)$-level structure $\varphi\colon (E,\iota,R_1,R_2)\to (E',\iota',R'_1,R'_2)$ is an isogeny of oriented elliptic curves $(E,\iota)\to (E',\iota')$ such that $\varphi(R_1)=R'_1$ and $\varphi(R_2)=R'_2$. 
\end{defn}

\begin{defn}\label{def:oriented-graph-level} We fix a set of representatives $S'$ for the equivalence classes of oriented elliptic curves in $V(\cX_l^q)$.
   \item[i)]  Let $\mathcal{X}_l^q(M)$ be the directed graph whose vertices are quadruples $(E,\iota,R_1,R_2)$, where $(E,\iota)\in S'$ and $\{R_1,R_2\}$ is a basis for $E[M]$, and such that the edges are given by $l$-isogenies defined as in Definition~\ref{defn:isogeny-oriented-level}.
    \item[ii)] Let $e\in\EE(\cX_l^q(M))$. We say that $e$ is horizontal/descending/ascending if the isogeny giving rise to $e$ is horizontal/descending/ascending.
\end{defn}

\begin{remark}
\label{rem:level-structures}
    If $\phi:E\rightarrow E'$ is an $l$-isogeny of elliptic curves, the composition $\phi\circ [-1]=[-1]\circ \phi$ is also an $l$-isogeny. Note that $[-1]$  acts non-trivially on a basis of $E[M]$ for $M>2$. Thus, equivalent isogenies induce edges with different targets if $M>2$. For this reason, we have decided to distinguish the edges arising from $\phi$ and $\phi\circ[-1]$ in our definition of $\cX_l^q(M)$ for all positive integers $M$, although these two isogenies are equivalent.  In particular, the graphs $\mathcal{X}_l^q(1)$ and $\mathcal{X}_l^q$ are not isomorphic to each other. They share the same set of vertices, but each edge of $\mathcal{X}_l^q$ corresponds to two distinct edges in $\mathcal{X}_l^q(1)$.

In the ordinary case, we studied similar graphs of $\Gamma_1(M)$-level structure in \cite{LM1}. In contrast to the current setting, the isogenies $\phi\circ[-1]$ and $\phi$ would always give rise to the same edge in the graphs studied in \textit{loc. cit.}
\end{remark}

In order to describe the structure of $\mathcal{X}_l^q(M)$ in more detail, we recall the following notions from \cite{arpin-et-all}.

\begin{defn}\label{def:suitable}
    Let $E/\FF_{q}$ be an elliptic curve that admits an orientation $K\hookrightarrow \End(E)\otimes\QQ$ and let $\theta\in \textup{End}(E)$. Let $f_\theta$ be the minimal polynomial of $\theta$ over $ \QQ$, which we assume to be of degree 2. Let $\alpha\in\CC$ be a root of $f_\theta$ and write $K=\QQ(\alpha)$. Let 
    \[\omega_K=\begin{cases}
        \frac{1+\sqrt{\Delta}}{2} \quad &\Delta\equiv 1 \pmod 4,\\
        \frac{\sqrt{\Delta}}{2} \quad &\Delta\equiv 0\pmod 4,
    \end{cases}\]
    where $\Delta$ is the fundamental discriminant of $K$.
        \item[i)] We say that $\theta$ is $l$-\textbf{primitive} if $\ZZ[\alpha]$ is $l$-primitive with respect to the orientation $K\hookrightarrow \textup{End}(E)\otimes\QQ$ given by $\alpha\mapsto \theta$.
        \item[ii)] We say that $\theta$ is $l$-\textbf{suitable} if $\alpha$ is of the form $f\omega_K+ml$, where $m$ is some integer and $f$ is the conductor of $\ZZ[\alpha]$. 
\end{defn}
\begin{remark}\label{rk:primitive-suitable}
    Given a supersingular elliptic curve $E$ and an orientation $\iota\colon K\to B$, there always exists $\theta \in \textup{End}(E)\bigcap\iota(K)$ that is $l$-primitive and $l$-suitable. Indeed, let $\mathcal{O}$ be an order of $K$ that is primitive with respect to $(E,\iota)$. Let $f$ be the conductor of $\mathcal{O}$ and let $\alpha=f\omega_K$. Then $\mathcal{O}=\ZZ[f\omega_K]=\ZZ[\alpha]$. If we set $\theta=\iota(\alpha)$, the endomorphism $\theta$ is $l$-primitive and $l$-suitable.
    
    In the ordinary case, suppose $\cO$ is the order $\iota^{-1}(\End(E))$ in $K$. Then we can take $\alpha$ as a generator of $\cO$ and set $\theta=\iota(\alpha)$.
\end{remark}

We recall the following results, which are specific to the supersingular setting.

\begin{proposition}\label{prop:eigenvalues}\cite[Proposition 4.8]{arpin-et-all} Assume that $E/\FF_q$ is a supersingular elliptic curve. Let $\theta$ be an element that is $l$-primitive and $l$-suitable for $(E,\iota)$. Assume that $(E,\iota)$ is of depth zero and let $\lambda_1,\lambda_2$ be the eigenvalues of the action of $\theta$ on $E[l]$.
\begin{itemize}
    \item[i)] If $l$ splits in $K$, then $\lambda_1\neq \lambda_2\in \Fl$. Let $P_{\lambda_i}$ be generators of the $\lambda_i$-eigenspaces, then $E\to E/\langle P_{\lambda_i}\rangle$ are horizontal isogenies for $i=1,2$. 
    \item[ii)] If $l$ ramifies in $K$, then $0\neq\lambda_1=\lambda_2\in \Fl$. The $\lambda_i$-eigenspace is one-dimensional, where  $i=1, 2$. Let $P$ be a generator of this eigenspace. Then $E\to E/\langle P\rangle$ is a horizontal isogeny.
    \item[iii)] If $l$ is inert in $K$, then $\lambda_1,\lambda_2\in \mathbb{F}_{l^2}\setminus \Fl$ and there are no horizontal isogenies on $E$. 
\end{itemize}
All other $l$-isogenies are descending in all three cases. 
    
\end{proposition}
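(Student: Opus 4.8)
This is \cite[Proposition~4.8]{arpin-et-all}; I sketch the argument. The plan is to transport everything to the action of $\theta$ on the two-dimensional $\Fl$-vector space $E[l]$. Write $f_\theta(X)=X^2-\Tr(\alpha)X+N(\alpha)$ for the minimal polynomial of $\theta$ (equivalently, of $\alpha$). Since $\iota$ carries the norm form on $K$ to the reduced norm on $\End(E)\otimes\QQ$, which is the degree, the characteristic polynomial of $\theta$ acting on $E[l]$ is the reduction $\overline{f_\theta}\in\Fl[X]$, and $\lambda_1,\lambda_2$ are its roots in $\overline{\Fl}$. The first step is to identify $\overline{f_\theta}$ using $l$-suitability: from $\alpha=f\omega_K+ml$ one reads off $\alpha\equiv f\omega_K\pmod{l\cO_K}$, so $\theta$ and $\iota(f\omega_K)$ act identically on $E[l]$ and $\overline{f_\theta}$ is the reduction modulo $l$ of the minimal polynomial of $f\omega_K$, an element whose associated order $\ZZ[f\omega_K]$ has discriminant $f^2\Delta$.

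Next I would bring in the remaining hypotheses. Because $(E,\iota)$ has depth zero and $\theta$ is $l$-primitive, $l\nmid f$, so $\operatorname{disc}(\overline{f_\theta})=f^2\Delta$ differs from $\Delta\bmod l$ by a nonzero square of $\Fl$. Hence $\overline{f_\theta}$ has two distinct roots in $\Fl$, a repeated root in $\Fl$, or no root in $\Fl$ according to whether $\left(\tfrac{\Delta}{l}\right)$ equals $1$, $0$, or $-1$, i.e. according to whether $l$ splits, ramifies, or is inert in $K$; this is exactly the location of $\lambda_1,\lambda_2$ asserted in (i), (ii), (iii). In the ramified case one has in addition to rule out $\lambda_1=\lambda_2=0$: the repeated root is $\tfrac12\,f\,\Tr(\omega_K)\bmod l$, and its non-vanishing is equivalent to $l\nmid N(\alpha)$, i.e. to $\alpha$ not lying in the prime of $\cO_K$ above $l$; this is the point where the precise normalisation of an $l$-suitable generator is used (and where the case $l=2$ would be checked separately).

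For the geometric half I would use the ideal-theoretic description of oriented $l$-isogenies underlying Theorem~\ref{thm:arpin}: if $\cO$ is a primitive order for $(E,\iota)$, then an $l$-isogeny out of $(E,\iota)$ is horizontal exactly when its kernel is $E[\mathfrak a]$ for an invertible $\cO$-ideal $\mathfrak a$ of norm $l$, and such ideals are precisely the primes of $\cO$ above $l$ --- two of them if $l$ splits, one if $l$ ramifies, none if $l$ is inert. Each such $E[\mathfrak a]$ is an $\cO$-submodule of $E[l]$ of $\Fl$-dimension one, hence a $\theta$-eigenline; comparing with the eigenvalue computation above identifies it with $\langle P_{\lambda_i}\rangle$ in case (i) (one eigenline per prime above $l$) and with $\langle P\rangle$ in case (ii). Finally, since $(E,\iota)$ has depth zero there are no ascending $l$-isogenies out of it --- an ascending $l$-isogeny would exhibit $\cO$ as an index-$l$ suborder of a larger order, forcing $l\mid\operatorname{cond}(\cO)$, which contradicts depth zero --- so each of the $l+1$ outgoing $l$-isogenies that is not one of the (two, one, or zero) horizontal ones must be descending, which is the final assertion.

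The only genuinely nontrivial step is the geometric identification in the last paragraph: to know that the eigenline quotients $E\to E/\langle P_{\lambda_i}\rangle$ are \emph{horizontal} --- that the pushed-forward orientation $\varphi_*\iota$ has the same primitive order $\cO$ --- and not merely that their kernels are $\theta$-stable. The hard part will be checking that the order $\widehat\varphi\,\iota(\cO)\,\varphi\subseteq\End(E/\langle P_{\lambda_i}\rangle)$ has index prime to $l$ in $\iota'(K)\cap\End(E/\langle P_{\lambda_i}\rangle)$; it is precisely for this that $l$-suitability of $\theta$ (not just $l$-primitivity) is needed, since the eigenlines of an arbitrary $l$-primitive endomorphism need not be cut out by $\cO$-ideals of norm $l$. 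The $\lambda\neq0$ verification in case (ii) is the other small point requiring its own argument.
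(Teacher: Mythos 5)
The paper itself does not prove this proposition; it is imported verbatim as \cite[Proposition 4.8]{arpin-et-all}, so there is no in-paper argument to compare against. Your sketch reconstructs the standard proof along the same lines as the cited source: the characteristic polynomial of $\theta$ on $E[l]$ is $f_\theta \bmod l$, the split/ramified/inert trichotomy is read off from $\operatorname{disc}(\ZZ[\alpha])=f^2\Delta$ with $l\nmid f$ (which indeed follows from depth zero plus $l$-primitivity), horizontal kernels are $E[\mathfrak a]$ for the invertible $\cO$-ideals $\mathfrak a$ of norm $l$ and hence are $\theta$-stable lines, and depth zero excludes ascending isogenies so the remaining edges are descending. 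All of that is sound, and your closing remark correctly isolates the horizontality of the eigenline quotients as the step that needs the ideal-theoretic description rather than mere $\theta$-stability.

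The one genuine gap is your treatment of the clause $0\neq\lambda_1=\lambda_2$ in (ii). With the definition of $l$-suitable used in this paper, $\alpha=f\omega_K+ml$, so $\theta$ acts on $E[l]$ exactly as $\iota(f\omega_K)$ does; the shift by $ml$ is invisible on $E[l]$ and cannot "normalise" anything there. Now take $l$ odd and ramified with $\Delta\equiv 0\pmod 4$, so $\omega_K=\sqrt{\Delta}/2$: then $\Tr(\alpha)\equiv 0$ and $N(\alpha)\equiv -f^2\Delta/4\equiv 0\pmod l$, so by your own criterion ($\lambda\neq 0\iff l\nmid N(\alpha)$) the repeated eigenvalue is $0$, and $\theta\bmod l$ is nilpotent nonzero. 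So the non-vanishing you attribute to "the precise normalisation of an $l$-suitable generator" is not delivered by the normalisation as stated here; it holds only when $\Delta\equiv 1\pmod 4$ (and the case $l=2$ needs separate care, as you note). Either the convention for $\omega_K$/$l$-suitability of the original reference must be invoked in the exact form needed, or the claim $\lambda\neq 0$ has to be argued or qualified separately. This does not affect the geometric content of (ii): $l$-primitivity already forces $\theta\bmod l$ to be non-scalar (otherwise $(\alpha-\lambda)/l$ would lie in the primitive order and $l$ would divide its index over $\ZZ[\alpha]$), so there is a unique $\theta$-stable line, necessarily $E[\mathfrak l]$, and the quotient by it is the unique horizontal isogeny — which is all that is used later in the paper.
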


\begin{lemma}
\label{lemma:horizonal-isog.}Assume that $l$ splits in $K$. {Let $E/\FF_q$ be a supersingular elliptic curve.}
Let $(E,\iota)$ be a depth-zero vertex.
    Let $\theta\in\End(E)$ be $l$-primitive and $l$-suitable for $(E,\iota)$ and let $\alpha$ be the element attached to $\theta$ given in Definition~\ref{def:suitable}. Let $\lambda_1$ and $\lambda_2$ be given as in Proposition~\ref{prop:eigenvalues}. For $i\in\{1,2\}$, let $P^{E}_i$ be a generator of the $\lambda_i$-eigenspace and $\phi^E_i\colon E\to E/\langle P_i^E\rangle$ be the natural projection.
    \begin{itemize}
        \item[i)]Let $\phi\colon (E,\iota)\to (E',\iota')$ be a horizontal isogeny. Then $\theta':=\iota'(\alpha)$ is $l$-primitive and $l$-suitable for $(E',\iota')$. In particular, $\lambda_1$ and $\lambda_2$ are the eigenvalues of the action of $\theta'$ on $E'[l]$.
        \item[ii)] 
        Let $P_i^{E'}$ be a generator of the $\lambda_i$-eigenspace of $\phi^{E}_i(E)$. Let $i\neq j$. Then $\phi_i^E(P_j)=cP_j^{E'}$, for some $c$ coprime to $l$. 
        \item[iii)] Fix $ i\in\{1, 2\}$. Let $v_0=(E_0,\iota_0,R_{0,1},R_{0,2})=(E,\iota,R_1,R_2)\in V(\cX_l^q(M))$ and define recursively
        \[v_j=(E_j,\iota_j,R_{j,1},R_{j,2})=\phi_i^{E_{j-1}}(v_{j-1}),\ j\ge1. \]
        Then there exists a non-negative integer $r$ such that $v_r=v_0$.
    \end{itemize}
    \end{lemma}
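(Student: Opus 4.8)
The plan is to prove each part in turn, with part (iii) being the substantive conclusion that reduces to a finiteness/pigeonhole argument once (i) and (ii) are in place. For part (i), I would unwind the definition of the pushforward orientation: $\iota' = \phi_*\iota$ means $\iota'(\alpha) = \frac{1}{\deg\phi}\widehat\phi\,\iota(\alpha)\,\phi$, so $\theta' = \iota'(\alpha)$ automatically lands in $\End(E')$ and satisfies the same minimal polynomial $f_\theta = f_{\theta'}$ as $\theta$. Since $\phi$ is horizontal, the primitive orders for $(E,\iota)$ and $(E',\iota')$ coincide, so $\ZZ[\alpha]$ being $l$-primitive for one forces it to be $l$-primitive for the other; $l$-suitability is a condition on $\alpha$ alone (its shape $f\omega_K + ml$), hence is preserved verbatim. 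The statement about eigenvalues is then Proposition~\ref{prop:eigenvalues} applied to $(E',\iota')$, noting that the characteristic polynomial of $\theta'$ acting on $E'[l]$ equals that of $\theta$ on $E[l]$ because both reduce $f_\theta \bmod l$.

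For part (ii), the key point is that $\phi_i^E\colon E \to E/\langle P_i^E\rangle = E'$ is a horizontal isogeny (by Proposition~\ref{prop:eigenvalues}(i)), so part (i) applies: $\theta' = (\phi_i^E)_*\iota(\alpha)$ has eigenvalues $\lambda_1,\lambda_2$ on $E'[l]$. Now $\phi_i^E$ restricted to $E[l]$ is a map $E[l]\to E'[l]$ with kernel exactly $\langle P_i^E\rangle$ (the $\lambda_i$-eigenspace), and it intertwines the actions of $\theta$ and $\theta'$ in the sense that $\theta'\circ\phi_i^E = \phi_i^E\circ\theta$ on $E[l]$ — this follows from $\theta'\phi_i^E = \frac{1}{l}\widehat{\phi_i^E}\theta\phi_i^E\cdot\phi_i^E = \frac{1}{l}\widehat{\phi_i^E}\phi_i^E\theta\phi_i^E = \theta\phi_i^E$ on the $l$-torsion after using $\widehat{\phi_i^E}\phi_i^E = [l]$. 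Hence $\phi_i^E$ sends the $\lambda_j$-eigenline of $\theta$ (for $j\neq i$) into the $\lambda_j$-eigenline of $\theta'$; since $P_j^E \notin \ker\phi_i^E = \langle P_i^E\rangle$ when $\lambda_j \neq \lambda_i$, the image $\phi_i^E(P_j^E)$ is a nonzero vector in the $\lambda_j$-eigenspace of $E'[l]$, so $\phi_i^E(P_j^E) = c P_j^{E'}$ with $c$ a unit mod $l$.

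For part (iii), fix $i$ and consider the sequence $v_0, v_1, v_2, \dots$ where $v_j = \phi_i^{E_{j-1}}(v_{j-1})$. By part (i), each $(E_j,\iota_j)$ is again a depth zero vertex and each $\phi_i^{E_{j-1}}$ is horizontal, so the construction is well-defined and stays among depth zero vertices. The underlying sequence of oriented curves $(E_j,\iota_j)$ cycles within the crater of the relevant connected component of $\cX_l^q$ — which is a finite cycle graph by Theorem~\ref{thm:arpin} — since following the $\lambda_i$-eigenspace direction corresponds to one of the two horizontal edges out of each crater vertex and hence traces the cycle in a fixed direction; therefore there is some period $s$ with $(E_s,\iota_s) \cong (E_0,\iota_0)$. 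It then remains to control the level structure: we must show the basis $(R_{j,1},R_{j,2})$ also returns to $(R_1,R_2)$ after finitely many further iterations. Since the composite $\Phi = \phi_i^{E_{s-1}}\circ\cdots\circ\phi_i^{E_0}$ is (after identifying $E_s$ with $E_0$) a horizontal endomorphism of $E_0$ of $l$-power degree fixing the orientation, it acts on $E_0[M]$ as an element of $\GL_2(\ZZ/M\ZZ)$, a finite group; hence some power $\Phi^t$ acts trivially on $E_0[M]$, giving $v_{st} = v_0$ and we may take $r = st$. The main obstacle is making precise the claim that iterating the $\lambda_i$-eigenspace projection traces out the crater cycle and returns to the starting curve — i.e., identifying $\Phi$ with a genuine endomorphism of $E_0$ rather than merely an isogeny to an isomorphic curve, and checking that this identification is compatible with the chosen representatives in $S'$; this is where one must invoke that horizontal isogenies of $l$-power degree between depth zero curves with the same orientation correspond to the cyclic class group action, so that the cycle structure of the crater guarantees a return.
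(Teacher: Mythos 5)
Your proposal is correct and follows essentially the same route as the paper: parts (i) and (ii) are unwound directly from the definitions (the paper merely asserts this), and part (iii) is proved exactly as in the paper, by using the finiteness of the depth-zero vertices in the connected component of $\cX_l^q$ (Theorem~\ref{thm:arpin}) to return to $(E,\iota)$ after $r_0$ steps and then letting the resulting $l$-power-degree self-isogeny, your $\Phi$ (the paper's $\gamma$), act with finite order on the $\Gamma(M)$-level structure to get $r=r_0s$. The only blemish is the intermediate manipulation in (ii), where you appear to commute $\theta$ past $\phi_i^E$; the intertwining identity $\theta'\circ\phi_i^E=\phi_i^E\circ\theta$ is nonetheless correct and follows, as you indicate, from $\widehat{\phi_i^E}\circ\phi_i^E=[l]$.
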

\begin{proof}
i) and ii) follow directly from definitions. It remains to prove iii). By Theorem~\ref{thm:arpin}, there are only finitely many depth-zero vertices in $\cX_l^q$ that lie in the same connected component as $v$. Thus, there is a non-negative integer $r_0$ such that
$E_{r_0}=E$ and $\iota_{r_0}=\iota$.
In particular, there is an $l$-power isogeny $\gamma:(E,\iota)\rightarrow (E,\iota)$ such that $\gamma(R_i)=R_{r_0,i}$ for $i=1, 2$. Let $s$ be the minimal non-negative integer such that $\gamma^s(R_i)=R_i$ for both $i$. Then Part iii) follows in setting $r=r_0s$.
\end{proof}

We now introduce several graph-theoretic notions that will be utilized in our description of $\cX_l^q(M)$.

\begin{defn}\label{def:intertwine}
    Let $Z$ be a directed graph. We define the \textbf{double intertwinement}  $Z^{+-}$ of $Z$ to be the graph such that  $$V(Z^{+-})=\{+v,-v: v\in V(Z)\}$$ (each vertex of $V(Z)$ gives rise to two vertices in $Z^{+-}$) and that $$\EE(Z^{+-})=\{e^{++},e^{+-},e^{-+},e^{--}:e\in\EE(Z)\},$$ where $e^{\bullet\circ}$ denotes an edge whose source and target are $\bullet v$ and $\circ w$, respectively, if $e$ is an edge from $v$ to $w$ in the original graph $Z$. 
\end{defn}
\begin{example} 
Let us illustrate the double intertwinement of the following graph:
\begin{center}\begin{tikzpicture}[vertex/.style={circle, draw}] 
\node[circle, draw] (a) at (-1,1) {$v_1$};
\node[circle,draw] (b) at (-1,-1) {$v_2$};
\node (e) at (-1,-2) {Z};
\draw[->,blue] (a)--(b);
\end{tikzpicture}
\end{center}
Then $Z^{+-}$ is given by
  \begin{center}\begin{tikzpicture}[vertex/.style={circle, draw}] 
\node[circle, draw] (a) at (-1,1) {$+v_1$};
\node[circle,draw] (b) at (-1,-1) {$+v_2$};
\node[circle, draw] (c) at (1,1) {$-v_1$};
\node[circle,draw] (d) at (1,-1) {$-v_2$};
\node (e) at (0,-2) {$Z^{+-}$};
\draw[->,orange] (a)--(b);
\draw[->,red] (c)--(d);
\draw[->] (a)--(d);
\draw[->,violet] (c)--(b);
\end{tikzpicture}
\end{center}

The double intertwinement of the directed cycle graph with $4$ edges and $4$ vertices is given by:
\begin{center}
\label{ex:double-int}
    \begin{tikzpicture}[vertex/.style={circle,draw}]
        \node[circle,draw] (a) at (-1,2) {$+v_1$};
        \node[circle,draw] (b) at (-1,0) {$+v_2$};
        \node[circle,draw] (c) at (-1,-2) {$+v_3$};
        \node[circle,draw] (d) at (-1,-4) {$+v_4$};
        \node[circle,draw] (e) at (1,2) {$-v_2$};
        \node[circle,draw] (f) at (1,0) {$-v_3$};
        \node[circle,draw] (g) at (1,-2) {$-v_4$};
        \node[circle,draw] (h) at (1,-4) {$-v_1$};
        \draw[->,orange] (a)--(b);
        \draw[->,orange] (b)--(c);
        \draw[->,orange] (c)--(d);
        \draw[->,orange] (d)to[in=225,out=135](a); 
        \draw[->,red] (e)--(f);
        \draw[->,red] (f)--(g);
        \draw[->,red] (g)--(h);
        \draw[->,red] (h)to[in=-45,out=45](e);
        \draw[->] (a)--(e);
        \draw[->] (b)--(f);
        \draw[->] (c)--(g);
        \draw[->] (d)--(h);
        \draw[->,violet] (g)--(a);
        \draw[->, violet] (e)--(c);
        \draw[->,violet] (h)--(b);
        \draw[->,violet] (f)--(d);
    \end{tikzpicture}
\end{center}
\end{example}
The following definition was introduced in \cite[\S5]{LM1}.

\begin{defn}
  Let $\r,\s,\t,\cc$ be non-negative integers. We say that a directed graph is an \textbf{abstract tectonic crater} of parameters $(\r,\s,\t,\cc)$ if it satisfies
    \begin{itemize}
  \item[a)] There are $\r\s\t$ vertices;
  \item[b)] Each edge is assigned a color -- blue or green;
  \item[c)] At each vertex $v$, there is exactly one blue edge with $v$ as the source, and exactly one blue edge with $v$ as the target, and similarly for green edges;
  \item[d)] Starting at each vertex, there is exactly one closed blue (resp. green) path of length $\r\s$ (resp. $\r\t$);
  \item[e)] After every $\s$ (resp. $\cc\t$) steps in the closed blue (resp. green) paths given in d), the two paths meet at a common vertex.  
  \end{itemize}
\end{defn}

\begin{remark}
Let $v$ be a vertex in an abstract tectonic crater of parameters $(\r,\s,\t,\cc)$. Let $C_b$ and $C_g$ denote the closed blue and green paths passing through $v$ given by d). By e), $C_b$ can be decomposed into $\r$ paths of length $\s$, each of which starts and ends at the vertices of $C_b\bigcap C_g$. Along $C_g$, these $\r$ vertices are reached after $n\cc \t$ steps, where $n=1,2,\ldots$ In particular, the cyclic group generated by $\cc\t$ in $\ZZ/\r\t\ZZ$ should have order $\r$. This means that $\cc$ and $\r$ should be coprime. Furthermore, we see that $C_g$ can be decomposed into $\r$ paths of length $\t$, each of which starts and ends at vertices of $C_b\bigcap C_g$.

Let $v'$ be a vertex of $C_b\bigcap C_g$ such that there is a path of length $\t$ in $C_g$ going from $v$ to $v'$. Our discussion above implies that there exists a positive integer $\cc'$ such that there is a path of length $\cc'\s$ in $C_b$ going from $v$ to $v'$. Therefore, we may reverse the roles of $\s$ and $\t$ in e), replacing $\s$ and $\cc\t$ by $\cc'\s$ and $\t$, respectively.
\end{remark}

\begin{example} An abstract tectonic crater with $\t=\s=1$, $\cc=2$ and $\r=5$.
    \begin{center}
           \begin{tikzpicture}[vertex/.style={circle, draw}]
\foreach \X[count=\Y] in {blue,red,green,yellow,green}
{\node[draw,circle,black] (x-\Y) at ({72*\Y+18}:2){$v_\Y$}; }
\foreach \X[count=\Y] in {0,...,4}
{\ifnum\X=0
\draw[line width=0.5mm,->,blue] (x-5) --(x-\Y) ;
\else
\draw[line width=0.5mm,->,blue] (x-\X) --(x-\Y) ;
\fi}
\draw[dashed,->,Green] (x-1)--(x-4);
\draw[dashed,->,Green] (x-4)--(x-2);
\draw[dashed,->,Green] (x-2)--(x-5);
\draw[dashed,->,Green] (x-5)--(x-3);
\draw[dashed,->,Green] (x-3)--(x-1); 
     \end{tikzpicture}
    \end{center}
\end{example}
For further examples, the reader is referred to \cite[\S4.2]{LM1}.
\begin{remark}
    Note that the double intertwinment of the directed cycle graph with $4$ vertices in Example~\ref{ex:double-int} is a tectonic crater with parameters $(2,2,2,1)$. More generally, the double intertwinment of a directed cycle graph $C_n$ of length $n$ is a tectonic crater with parameters $(n/2,2,2,1)$. The green edges are given by $\{e^{++},e^{--}:e\in\EE(C_n)\}$ while the blue edges are given by $\{e^{+-},e^{-+}:e\in\EE(C_n)\}$.
\end{remark}
We may enhance the definition of an abstract tectonic crater, giving the "tectonic" version of a volcano graph:

  \begin{defn}\label{def:crater}
 \item[i)]  An \textbf{infinite tectonic volcano} $G$ is a directed graph whose vertices may be decomposed into $V(G)=\bigcup_{i=0}^\infty V_i$ such that
  \begin{itemize}
      \item The out-degree of every vertex is $l+1$;
      \item  The subgraph generated by $V_0$ is an abstract tectonic crater (we shall refer to $V_0$ as the tectonic crater of $G$);
      \item If there is an edge between two vertices $v_i\in V_i$ and $v_j\in V_j$, then $i-j\in \{\pm 1\}$ or $i=j=0$;
      \item Every $v\in V_i$ with $i\ge 1$ has exactly one edge starting at $v$ and ending at a vertex in $V_{i-1}$ and $l$ edges starting at $v$ and ending at a vertex in $V_{i+1}$.
  \end{itemize}
  \item[ii)]  A \textbf{finite tectonic volcano} $G$ is a directed graph whose vertices may be decomposed into $V(G)=\bigcup_{i=0}^k V_i$ such that
  \begin{itemize}
      \item The out-degree of every vertex $v\in \bigcup_{i=0}^{k-1}V_i$ is $l+1$;
      \item  The subgraph generated by $V_0$ is an abstract tectonic crater (we shall refer to $V_0$ as the tectonic crater of $G$);
      \item If there is an edge between two vertices $v_i\in V_i$ and $v_j\in V_j$, then $i-j\in \{\pm 1\}$ or $i=j=0$;
      \item Every $v\in V_i$ with $1\le i\le k-1$ has exactly one edge starting at $v$ and ending at a vertex in $V_{i-1}$ and $l$ edges starting at $v$ and ending at a vertex in $V_{i+1}$.
      \item Every vertex in $v\in V_k$ has out degree $1$ and this edge ends in $V_{k-1}$.
  \end{itemize}
\end{defn}

We are now ready to prove our first theorem on the structure of $\cX_l^q(M)$.
\begin{theorem}\label{thm:volcano}
    Assume that $l$ splits in $K$ and $M>2$. Let $\cY_M$ be a connected component of $\mathcal{X}_l^q(M)$. Then $\cY_M$ is the double intertwinement of an infinite tectonic (resp. a finite tectonic) volcano graph {if \eqref{sup} (resp. \eqref{ord}) holds.} 
\end{theorem}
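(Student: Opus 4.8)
The plan is to analyze the connected component $\cY_M$ layer by layer according to depth, using the ``forgetful'' map that sends $(E,\iota,R_1,R_2)$ to $(E,\iota)$, and combine Theorem~\ref{thm:arpin} (which says the underlying unleveled component is a volcano) with Lemma~\ref{lemma:horizonal-isog.} to control what happens to the level structure. First I would set up notation: let $\cV$ be the connected component of $\cX_l^q$ obtained by forgetting the level structures of the vertices of $\cY_M$; by Theorem~\ref{thm:arpin}, $\cV$ is a volcano with crater generated by the depth-zero vertices, and by Proposition~\ref{prop:eigenvalues}, since $l$ splits in $K$, each depth-zero vertex has exactly two horizontal out-edges (the two eigenspace quotients) and $l-1$ descending ones, while every vertex of depth $\ge 1$ has one ascending and $l$ descending out-edges. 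I would then stratify $V(\cY_M)$ by depth via the forgetful map.

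The key structural step is to understand the crater, i.e.\ the depth-zero part of $\cY_M$. Here I would use Lemma~\ref{lemma:horizonal-isog.}(iii): starting from a depth-zero vertex $v_0=(E,\iota,R_1,R_2)$ and iterating the horizontal isogeny $\phi_i^{E}$ attached to the $i$-th eigenspace, one returns to $v_0$ after $r$ steps; this produces, for each $i\in\{1,2\}$, a closed path. I would argue that these two closed paths through each vertex, together with the meeting conditions coming from the relation between $\lambda_1$- and $\lambda_2$-eigenspaces under horizontal isogenies (Lemma~\ref{lemma:horizonal-isog.}(ii), which says $\phi_i^E$ carries the $\lambda_j$-eigenspace generator to a nonzero multiple of the corresponding generator downstream), realize precisely the axioms (a)--(e) of an abstract tectonic crater for suitable parameters $(\r,\s,\t,\cc)$ determined by $M$, the splitting of $l$, and the action of the relevant unit/ideal class data on $E[M]$. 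This is where I expect the main obstacle to lie: extracting the exact parameters $(\r,\s,\t,\cc)$ and verifying the synchronization condition (e) requires carefully tracking how the two horizontal isogeny cycles act on the $\Gamma(M)$-level structure, essentially a computation with the $\cO$-module structure on $E[M]$ analogous to the ordinary-case analysis in \cite[\S4,\S5]{LM1}; I would reduce it to that computation rather than redo it.

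Once the crater is identified as an abstract tectonic crater, the descending part is comparatively routine: each depth-$k$ vertex of $\cV$ lifts to depth-$k$ vertices of $\cY_M$, the descending and ascending edge counts are inherited directly from Proposition~\ref{prop:eigenvalues} (one ascending, $l$ descending), and the condition that edges only connect adjacent depths (or stay within depth $0$) is immediate from the volcano structure of $\cV$. This shows the component, \emph{before} accounting for the $[-1]$-twist of isogenies, is a tectonic volcano in the sense of Definition~\ref{def:crater}.

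Finally, I would bring in Remark~\ref{rem:level-structures}: for $M>2$ the automorphism $[-1]$ acts nontrivially on $E[M]$, so each $l$-isogeny $\phi$ and its twist $\phi\circ[-1]$ give edges with distinct targets, and moreover $(E,\iota,R_1,R_2)$ and $(E,\iota,-R_1,-R_2)$ are genuinely distinct vertices in the same component (joined through the twisted isogenies). Writing $+v$ and $-v$ for these two lifts of an unleveled-with-sign-forgotten vertex $v$, I would check that the four edge types $\phi^{++},\phi^{+-},\phi^{-+},\phi^{--}$ match exactly the edges of the double intertwinement of Definition~\ref{def:intertwine}. Concretely: the map $Z \mapsto Z^{+-}$ applied to the tectonic volcano $Z$ produced above yields $\cY_M$, because passing from $Z$ to $\cY_M$ precisely doubles each vertex by a sign and replaces each edge by its four sign-decorated versions. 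Hence $\cY_M \cong Z^{+-}$ with $Z$ a tectonic volcano, which is the claim.
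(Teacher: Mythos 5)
Your proposal is correct and follows essentially the same route as the paper's proof: analyze the depth-zero layer via the two eigenspace-quotient horizontal isogenies from Lemma~\ref{lemma:horizonal-isog.}, reduce the verification of the abstract tectonic crater axioms to the computation of \cite[\S4]{LM1}, inherit the descending/ascending structure from Proposition~\ref{prop:eigenvalues} and the volcano structure of $\cX_l^q$, and finally obtain the double intertwinement from the nontrivial action of $[-1]$ on the $\Gamma(M)$-level structure for $M>2$. The paper just makes one point slightly more explicit than you do, namely fixing representatives of $l$-isogenies modulo $\{\pm1\}$ before building the tectonic volcano $Z$, but this is exactly the ``before accounting for the $[-1]$-twist'' step you describe.
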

\begin{proof}
    There is a natural map $\pi\colon \cY_M\to \cX_l^q$ given by \[(E,\iota, R_1,R_2)\mapsto (E,\iota).\] Note that this is not a covering of graphs. {For example, if there are vertices whose depth are positive, the depth-zero} vertices of $\cY_M$ have out-degree $2(l+1)$, while those in $\cX_l^q$ have out-degree $(l+1)$. 
    
    It follows from Theorem~\ref{thm:arpin} that $\cX_l^q$ is the union of (finite) volcanoes. 
    Let $(E,\iota)$ be a depth-zero vertex of $\cX_l^q$ and let $v$ be a lift of $(E,\iota)$ in $\cY_M$. For each pair $(E_1,E_2)$ of elliptic curves, we fix a set of representatives $S(E_1,E_2)$ of $\{\textup{$l$-isogenies from $E_1$ to $E_2$}\}/\{\pm 1\}$. 

    Let $\theta\in \textup{End}(E)$ be $l$-primitive and $l$-suitable for $(E,\iota)$ and let $\alpha=\iota^{-1}(\theta)$ as in Remark~\ref{rk:primitive-suitable}.  It follows from Proposition~\ref{prop:eigenvalues}i)  that in the supersingular case, the eigenvalues $\lambda_1$ and $\lambda_2$ of the action of $\theta$ on $E[l]$ are distinct elements of $\Fl$; {the same holds in the ordinary case since $E[l]\cong \cO/\fL_1\oplus\cO/\fL_2$, where $\cO=\iota^{-1}(\End(E))$, and $\fL_1$ and $\fL_2$ are the ideals of $\cO$ lying above $l$}. Let $S_0(E_1,E_2)\subset S(E_1,E_2)$ be the subset of horizontal isogenies. Starting at $v$ and propagating only along the edges given by $S_0(E_1,E_2)$, we obtain a finite connected subgraph $Z_0$ of $\cY_M$. We will show that $Z_0$ is an abstract tectonic crater. 

    Let $(E',\iota',R'_1,R'_2)$ be an arbitrary vertex in $Z_0$. We define $\theta'=\iota'(\alpha)$.  The eigenvalues of the action of $\theta'$ on $E'[l]$ are once again $\lambda_1$ and $\lambda_2$; {this follows from Lemma~\ref{lemma:horizonal-isog.}i) in the supersingular case, and the decomposition $E'[l]\cong \cO/\fL_1\oplus\cO/\fL_2$ in the ordinary case (as $E$ and $E'$ are related via horizontal isogenies). In other words, the eigenvalues on $l$-torsions are invariant within the vertices of $Z_0$}. For $1\le i \le 2$, let $P^{E'}_i$ be a $\lambda_i$-eigenvector of the action of $\theta'$ on $E[l]$. Let \[\phi^{E'}_i\colon E'\to E'/\langle P^{E'}_i\rangle\] be the natural projection. We say that an edge in $Z_0$ is blue (resp. green) if it is induced by $\phi^{E'}_1$ or by $\phi^{E'}_1\circ[-1]$ (resp. by $\phi^{E'}_2$ or $\phi^{E'}_2\circ [-1]$). By the definition of $S_0(E_1,E_2)$, exactly one of the two elements in $\{\phi^{E'}_i,\phi^{E'}_i\circ [-1]\}$ induces an edge in $Z_0$. {Note that blue and green edges commute in the following sense. Let $v\in V(Z_0)$, and suppose that we obtain $v_g$ (resp. $v_b$) after applying the unique green (resp. blue) edge with $v$ as the source. Then, the target of the unique blue edge with $v_g$ as the source coincides with the target of the unique green edge with $v_b$ as the source. In the supersingular case, this follows from Lemma~\ref{lemma:horizonal-isog.}ii); in the ordinary case, it follows from the fact that $\mathcal{O}$ is commutative.}
    
    For every vertex $v'$ in $Z_0$, we denote by $\phi_{i,j}(v')$ the vertex at the end of a path consisting of $i$ blue and $j$ green edges that have $v'$ as the source. Using a similar proof as the one given in \cite[Lemma 4.4]{LM1}, we can show that the following holds.
    \begin{equation}
    \label{analogon-4-4}
        \textup{If $\phi_{i,j}(v)=\phi_{i',j'}(v)$, then $\phi_{i,j}(v')=\phi_{i',j'}(v')$ for all $v'\in V(Z_0)$.}
    \end{equation}
    In the supersingular case, Lemma~\ref{lemma:horizonal-isog.}iii) tells us that there exists a non-negative integer $h_1$ (resp. $h_2$) such that a blue (resp. green) path of length $h_1$ (resp. $h_2$) sends $v$ to itself. {In the ordinary case, the same holds if we take $h_i$ such that $\mathfrak{L}_i^{h_i}=(a)$, with $a\equiv 1\pmod M$.} Note that $\phi_{h_1,0}(v)=\phi_{0,h_2}(v)=v$. Let $s$ be the minimal integer such that $0<s<h_1$ such that there exists an integer $j$ satisfying $\phi_{s,0}(v)=\phi_{0,j}(v)$. Let $t=\gcd(h_2,j)$ and let $c t=j$. Let  $(s',t',c')$ be a triple of integers satisfying the following properties 
    \begin{itemize}
        \item $s'\mid h_1$.
        \item $t'\mid h_2$.
        \item $c'$ is coprime to $h_2/t'$. 
        \item $\phi_{s',0}(v)=\phi_{0,c't'}(v)$.
    \end{itemize}
    Using \eqref{analogon-4-4} instead of \cite[Proposition 4.4]{LM1} in the proof of \cite[proposition 4.6]{LM1}, we deduce:
    \begin{equation}
        \label{analogon-4-6}
        \textup{There exists an integer $d$ such that $s'=ds$ and $t'=dt$.}
    \end{equation}     
    Substituting Proposition 4.4 in \textit{loc. cit.} by \eqref{analogon-4-4}, and Proposition 4.6 by \eqref{analogon-4-6}, it follows from the same line of argument as given in \cite[Section 4.2]{LM1} that $Z_0$ is indeed a tectonic volcano.
    
    Let $Z$ be the subgraph obtained by starting at $v$ and propagating only along the edges in $S(E_1,E_2)$. Then $Z_0$ is a subgraph of $Z$ and all horizontal edges of $Z$ lying in $Z_0$. We next show that $Z$ is a tectonic volcano graph, with $Z_0$ as the crater. We consider the supersingular and ordinary cases separately.
    
    Suppose that $E$ is supersingular. By Proposition~\ref{prop:eigenvalues}, every vertex in $Z_0$ is the source of exactly $l-1$ descending edges in $Z$. By \cite[Proposition~4.1]{onuki}, every vertex in $V(Z)\setminus V(Z_0)$ is the source of one ascending edge and $(l-1)$ descending edges. Therefore, $Z$ is indeed a tectonic volcano with $Z_0$ as its tectonic crater.     
    If $E$ is ordinary, the number of descending and ascending vertices is given by Theorem~\ref{thm:pazuki}. In particular, $Z$ is a finite tectonic volcano with $Z_0$ as its crater.
    
    The last step of our proof constitutes of showing that $\cY_M$ is the double intertwinement of $Z$.
    For every vertex $v=(E,\iota, R_1,R_2)$ in $Z$, the definition of $S(E_1,E_2)$ implies that the "opposite" vertex $v':=(E,\iota, -R_1,-R_2)$ does not lie in $Z$. We may find a path from $v$ to $v'$ by composing an appropriate power of $[l]$ with $[-1]$. Thus, $v'\in V(\cY_M)\setminus V(Z)$. In particular,  we may identify $v$ and $v'$ with $+v$ and $-v$ in $ V(Z^{+-})$, respectively. 
    
    If there is an edge $e$ from $v$ to $w$ in $Z$, there is one from $v$ to $-w$ in $\cY_M$ simply by composing the path from $v$ to $w$ with $[-1]$. Similarly, there are edges from $-v$ to $-w$ and from $-v$ to $w$. Therefore, $e\in \EE(Z)$ gives rise to four edges in $\cY_M$, which we may identify with the edges $e^{\bullet\circ}$, $\bullet,\circ\in\{+,-\}$ in $Z^{+-}$.

    As all edges in $\cY_M$ are given by compositions of the edges in the set $S(E_1,E_2)$ and $[-1]$, we conclude that $\cY_M=Z^{+-}$.
\end{proof}
In a similar manner, one can prove:
\begin{theorem}\label{thm:volcano2}
    Assume that $l$ ramifies in $K$ and $M>2$. {Furthermore, assume that  \eqref{sup} (resp. \eqref{ord} holds)}. Then $\cY_M$ is the double intertwinement of an infinite (resp. finite) volcano graph with a connected crater.
\end{theorem}

\begin{theorem}\label{thm:volcano3}
    Assume that $l$ is inert in $K$, $M>2$. {Furthermore, assume that \eqref{sup}  (resp. \eqref{ord}) holds}.  Then $\cY_M$ is the double intertwinement of an infinite (resp. finite) volcano graph whose crater is disconnected.
\end{theorem}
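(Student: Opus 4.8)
\textbf{Proof proposal for Theorem~\ref{thm:volcano3}.} The plan is to adapt the argument of Theorem~\ref{thm:volcano} to the inert case, with two modifications coming from Proposition~\ref{prop:eigenvalues}iii): there are no horizontal isogenies on a depth zero vertex, and consequently the analogue of the "tectonic crater" degenerates to a collection of isolated depth zero vertices. So I would first set up the map $\pi\colon\cY_M\to\cX_l^q$ exactly as before and invoke Theorem~\ref{thm:arpin} to see that $\cX_l^q$ is a union of volcanoes, this time with \emph{disconnected} crater (each depth zero vertex being its own crater component), since every $l$-isogeny out of a depth zero vertex is descending.

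Next I would fix a depth zero vertex $(E,\iota)$ of $\cX_l^q$, a lift $v=(E,\iota,R_1,R_2)$ of it in $\cY_M$, and for each ordered pair $(E_1,E_2)$ a set of representatives $S(E_1,E_2)$ of $\{l\text{-isogenies }E_1\to E_2\}/\{\pm1\}$, just as in the proof of Theorem~\ref{thm:volcano}. Starting at $v$ and propagating only along edges induced by elements of $S(E_1,E_2)$, I obtain a subgraph $Z$. Since there are no horizontal edges at depth zero and, by \cite[Proposition 2.15]{arpin-et-all}, every vertex of positive depth has exactly one ascending edge and $l$ descending edges, the graph $Z$ has the shape of a volcano whose crater $V_0$ consists of the single vertex $v$ with no edges among its (one-element) crater. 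Running this over all depth zero vertices and all lifts in the connected component shows $Z$ is a volcano with disconnected crater.

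Finally I would show $\cY_M = Z^{+-}$, word for word as at the end of the proof of Theorem~\ref{thm:volcano}: the "opposite" vertex $v'=(E,\iota,-R_1,-R_2)$ is reachable from $v$ via a suitable power of $[l]$ composed with $[-1]$, hence lies in $\cY_M$ but, by the choice of $S(E_1,E_2)$, not in $Z$; we identify $v,v'$ with $+v,-v$; and each edge $e\colon v\to w$ of $Z$ produces, by composing the corresponding path with $[-1]$ on the source and/or target, exactly the four edges $e^{++},e^{+-},e^{-+},e^{--}$ of $Z^{+-}$. Since every edge of $\cY_M$ is a composition of edges from $S(E_1,E_2)$ with $[-1]$, this exhausts $\cY_M$, giving $\cY_M=Z^{+-}$.

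The main obstacle is essentially bookkeeping rather than a genuine difficulty: one must check carefully that the absence of horizontal isogenies really does force the crater of $Z$ to be a \emph{totally disconnected} graph (an "abstract crater" in the degenerate sense of Definition~\ref{def:volcano}i)) rather than a cycle, and that the descending/ascending edge counts from \cite[Proposition 2.15]{arpin-et-all} still apply verbatim at positive depth in the inert case. Once that is in place, the double-intertwinement step is identical to the split case and requires no new input.
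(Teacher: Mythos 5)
Your overall strategy---rerun the proof of Theorem~\ref{thm:volcano} with Proposition~\ref{prop:eigenvalues}iii) in place of part i), then apply the double-intertwinement step verbatim---is exactly what the paper intends (the paper only asserts that Theorem~\ref{thm:volcano3} follows ``in a similar manner''). However, there is a genuine error in your description of the crater of $Z$, and it occurs at precisely the point where the inert case differs from the split case. You claim that the crater $V_0$ of $Z$ consists of the single vertex $v$. This is false for the subgraph you defined: propagating from $v=(E,\iota,R_1,R_2)$ along a descending edge induced by $\phi\colon E\to E_1$ and then along the unique ascending edge out of the resulting depth-one vertex---whose representative in $S(E_1,E)$ is $\pm\hat\phi$---lands you at $(E,\iota,\epsilon l R_1,\epsilon l R_2)$ for some sign $\epsilon$, which for $M>2$ is in general a depth-zero vertex distinct from $v$. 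Iterating, $Z$ contains the whole family of depth-zero vertices $(E,\iota,\epsilon l^k R_1,\epsilon l^k R_2)$, and since $l$ is inert there are no horizontal isogenies joining them; it is exactly these several pairwise non-adjacent crater vertices, connected to one another only through depth-one vertices, that make the crater of $Z$ a totally disconnected graph with more than one vertex, i.e.\ the ``disconnected crater'' of the statement. With a one-vertex crater, as you assert, the conclusion you are trying to prove would not even hold for your $Z$.

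Your attempted repair, ``running this over all depth zero vertices and all lifts in the connected component,'' does not function as an argument: in the inert case the component of $\cX_l^q$ below $\cY_M$ contains exactly one depth-zero vertex (the disconnectedness of the crater only appears after the level structure is added), and $Z$ is already determined by propagation from $v$, so it cannot be enlarged by fiat; instead you must identify which lifts of $(E,\iota)$ actually occur in $Z$ (the $\pm l$-power multiples of $(R_1,R_2)$ as above, the opposite lifts $(E,\iota,-l^kR_1,-l^kR_2)$ landing outside $Z$) and check that no edges of $Z$ join them. Once the crater is described correctly, the remainder of your argument---the edge counts at positive depth via \cite[Proposition 2.15]{arpin-et-all} and the identification $\cY_M=Z^{+-}$ copied from the end of the proof of Theorem~\ref{thm:volcano}---goes through as in the paper.
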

\begin{remark}
The graph $\cY_M$ studied in Theorems \ref{thm:volcano}, 
 \ref{thm:volcano2} and \ref{thm:volcano3} is finite if and only if \eqref{ord} holds. In this case, the proofs are similar to the corresponding results in \cite[\S4]{LM1}. The main divergence is the identification of the edges arising from $\phi$ and $\phi\circ[-1]$ (see Remark~\ref{rem:level-structures}).

    If $M=2$, the vertices $(E,\iota,R_1,R_2)$ and $(E,\iota,-R_1,-R_2)$ coincide. Furthermore, $\phi$ and $\phi\circ [-1]$ define the same edge. Thus, instead of the double intertwinement of a (tectonic) volcano, the graph $\mathcal{Y}_2$ is derived from a (tectonic) volcano by drawing every edge with multiplicity $2$. 
\end{remark}

\begin{defn}
    We define $X_l^q(M)^{\ss}$ as the subgraph of $X_l^q(M)$ (c.f. Defintiion~\ref{def:intro}) generated by the vertices arising from supersingular elliptic curves. 
\end{defn}
\begin{corollary}\label{cor:quotient-volcano} 
    Suppose that $M>2$ and $r\equiv 1\pmod{12}$.
    Then each connected component of $X_l^q(M)^{\ss}$ is the quotient of the double intertwinement of an infinite volcano graph or an infinite tectonic volcano graph. 
\end{corollary}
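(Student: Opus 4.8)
The plan is to realise each connected component of $X_l^q(M)^{\ss}$ as the image, under the map that forgets the orientation, of a connected component of $\mathcal{X}_l^q(M)$ for a suitably chosen imaginary quadratic field $K$, and then to feed this into Theorems~\ref{thm:volcano}, \ref{thm:volcano2} and \ref{thm:volcano3}. First I would fix an imaginary quadratic field $K$ in which $q$ does not split (for instance $K=\QQ(\sqrt{-q})$, in which $q$ ramifies) and form $\mathcal{X}_l^q$ and $\mathcal{X}_l^q(M)$ relative to this $K$ as above. Since $\End(E)\otimes\QQ\cong B$ for every supersingular $E/\FF_{q^k}$, where $B$ is the quaternion algebra ramified exactly at $q$ and $\infty$, and since $K$ embeds into $B$ precisely because $K$ is imaginary and $q$ is non-split in $K$, every supersingular curve over $\FF_{q^k}$ admits a $K$-orientation. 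Because $\textup{Aut}(E)=\{\pm1\}$ acts trivially on orientations, each $K$-orientation of a curve in $S$ is its own equivalence class, so we may take the set $S'$ of representatives of oriented curves to consist of the pairs $(E,\iota)$ with $E\in S$ and $\iota$ a $K$-orientation of $E$. With this normalisation the rule $(E,\iota,R_1,R_2)\mapsto (E,R_1,R_2)$ is well defined on vertices, and sending the edge arising from an $l$-isogeny $\phi$ (compatible with the orientations and level structures) to the edge of $X_l^q(M)^{\ss}$ arising from the same $\phi$ yields a morphism of directed graphs $\Pi\colon\mathcal{X}_l^q(M)\to X_l^q(M)^{\ss}$.

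Next I would check that $\Pi$ is a covering of directed graphs. Every vertex of either graph has out-degree and in-degree $2(l+1)$ — one out-edge for each of the $l+1$ order-$l$ subgroups and each of the two signs of the corresponding quotient isogeny, and dually for in-edges — and $\Pi$ restricts to bijections on out-stars, because the pushed-forward orientation $\phi_*\iota$ is determined by $\phi$ and $\iota$. For the in-stars one uses that pushforward of $K$-orientations along an $l$-isogeny $\psi\colon E''\to E$ is a bijection between the $K$-orientations of $E''$ and those of $E$ — a consequence of $\psi\hat\psi=\hat\psi\psi=[l]$ together with the centrality of $[l]$ — so that each in-edge at $(E,R_1,R_2)$ lifts uniquely to an in-edge at any prescribed preimage $(E,\iota,R_1,R_2)$. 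Hence $\Pi$ is a covering; in particular it is surjective, and since every edge of $X_l^q(M)^{\ss}$ incident to a vertex in the image of a connected component $\cY_M$ of $\mathcal{X}_l^q(M)$ lifts to an edge of $\mathcal{X}_l^q(M)$ incident to a vertex of $\cY_M$, the image $\Pi(\cY_M)$ is closed under incident edges, hence a full connected component of $X_l^q(M)^{\ss}$, of which it is a quotient via $\Pi|_{\cY_M}$. Conversely every connected component $C$ of $X_l^q(M)^{\ss}$ is of this form: pick $(E,R_1,R_2)\in V(C)$ and a $K$-orientation $\iota$ of $E$, and take $\cY_M$ to be the connected component of $\mathcal{X}_l^q(M)$ containing $(E,\iota,R_1,R_2)$; then $\Pi(\cY_M)=C$.

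To conclude, Theorems~\ref{thm:volcano}, \ref{thm:volcano2} and \ref{thm:volcano3} — applied according to whether $l$ splits, ramifies, or is inert in $K$ — identify $\cY_M$ with the double intertwinement $Z^{+-}$ of a tectonic volcano (when $l$ splits) or of a volcano (when $l$ ramifies or is inert), so that $C=\Pi(\cY_M)$ is a quotient of the double intertwinement of a volcano or a tectonic volcano, as claimed. The step I expect to be the main obstacle is the verification that $\Pi$ is a covering rather than merely a surjective morphism — concretely the in-star bijectivity, which rests on the invertibility of pushforward of orientations along an isogeny; the remaining ingredients (existence of a suitable $K$, the compatible choice of $S'$, and the fact that the image of a connected component is a full connected component) are routine bookkeeping.
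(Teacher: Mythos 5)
Your proposal is correct and follows essentially the same route as the paper: the quotient map is the forgetful morphism $(E,\iota,R_1,R_2)\mapsto(E,R_1,R_2)$ from a connected component of $\cX_l^q(M)$, combined with Theorems~\ref{thm:volcano}, \ref{thm:volcano2} and \ref{thm:volcano3} according to the splitting of $l$ in $K$. You simply make explicit some points the paper's one-line proof leaves implicit (existence of $K$-orientations on every supersingular curve for the fixed $K$, the compatible choice of $S'$, and that the image of a component is a full component via edge lifting), which is fine; the full covering-map verification is stronger than what the quotient statement requires but harmless.
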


\begin{proof}
    This follows from the fact that each connected component of $X_l^q(M)^{\ss}$ is the quotient of certain $\cY_M$ given in  Theorems~\ref{thm:volcano}, \ref{thm:volcano2} and \ref{thm:volcano3}, where the covering map is given by sending the vertex $(E,\iota, R_1,R_2)$ to $(E,R_1,R_2)$.
\end{proof}\begin{example}
    Put $r=13$. Then there exists (up to isomorphism) exactly one supersingular elliptic curve $E$ defined over $\mathbb{F}_{13^2}$. Let $K=\Q(\sqrt{-2})$. Then $K$ is a CM field of $E$. If $l$ is inert in $K$, the curve $E$ does not admit a horizontal $l$-isogeny. In particular, there are no horizontal edges in the graph $X_l^q(M)^\ss$. Every connected component of $X_l^q(M)^{\ss}$ has a totally disconnected crater. If $l=2$, then $l$ is ramified in $K$. In this case, each vertex of depth zero in $X_l^q(M)^{\ss}$ admits exactly $2$ horizontal isogenies,  say $\phi_1$ and $\phi_2=-\phi_1$. Then the crater is the double interwinment of a cycle graph. If $l$ splits in $K$, Theorem~\ref{thm:volcano} says that the crater is the double intertwinment of a tectonic crater. 
    \end{example}
\subsection{Towers of isogeny graphs of oriented elliptic curves}
We prove the following analogue of Theorem~\ref{thm:ordinary-components} in the context of oriented supersingular elliptic curves.
\begin{theorem} \label{thm:oriented-components}
Suppose that \eqref{sup} holds.
    Let $\cX_0$ be a connected component of $\cX_l^q(N)$. For each integer $n\ge1$, let $\cX_n$ be the pre-image of $\cX_0$ in $\cX_l^q(p^nN)$. For $n$ sufficiently large, the number of connected components in $\cX_n$ is given by $cp^{2(n-1)}$ (resp. $cp^{3(n-1)}$) if $p$ is split (resp. non-split) in $K$, where $c\ge (p^2-p)$ is a constant independent of $n$.
\end{theorem}
\begin{proof}
Let $v=(E,\iota,R_1,R_2)$ be an element of the crater of $\cX_0$. Let $\cO$ be an order of $K$ that is primitive for $(E,\iota)$.  Since $v$ belongs to the crater of $\cX_0$, we have $l\nmid [\cO_K:\cO]$. In particular, the splitting behavior of $l$ in $K$ is the same as that in $\cO$.

Consider the natural graph covering $\cX_n\rightarrow \cX_0$. 
 Let $(E,\iota,R_1,R_2,P,Q)$ and $(E,\iota,R_1,R_2,P',Q')$ be two pre-images of $v$ in $V(\cX_n)$. By Remark~\ref{rem:path}, they lie in the same connected component of $\cX_n$ if and only if there is an $l$-power isogeny $\phi\colon E\to E$ such that $\phi_*\iota=\iota$, $\phi(R_1)=R_1$, $\phi(R_2)=R_2$, $\phi(P)=P'$ and $\phi(Q)=Q'$. Let $U_0$ denote the set of such isogenies.

The condition that $\phi_*\iota=\iota$ can be rewritten as
\[
\phi\circ\hat\phi\circ\iota(\alpha)=\phi\circ\iota(\alpha)\circ\hat{\phi},\ \forall\alpha\in K.
\]
Thus, as elements of the quaternion algebra $B$, $\hat{\phi}$ (and hence $\phi$) belongs to the center of $\iota(K)$ in $B$, which is $\iota(K)$ itself by general properties of quaternion algebras. Therefore, $\phi$ belongs to $\iota(\cO)\subset \End(E)$. Conversely, it is clear that any $\phi\in\iota(\cO)$ satisfies $\iota_*\iota=\iota$. Hence, we can identify the set $U_0$ with the multiplicative set of elements in $\mathcal{O}$ of $l$-power norm that act trivially on $E[N]$. We denote this set by $U$.

It is clear that the kernel of the action of $\cO$ on $E[p^n]$ is $p^n\cO$. Therefore, if $\phi(P)=\phi'(P)$ and $\phi(Q)=\phi'(Q)$ if and only if the images of $\phi$ and $\phi'$ in $U$ are congruent modulo $p^n$.
Thus, the number of elements in a connected component of $V(\cX_n)$ is equal to $|U_n|$, where $U_n$ is the image of $U$ in $(\cO/p^n)^\times$.
In addition, the degree of the covering $\cX_n/\cX_0$ is $|\GL_2(\ZZ/p^n\ZZ)|$. Therefore, as in the proof of Theorem~\ref{thm:ordinary-components}, the number of connected components of $\cX_n$ is given by $|\GL_2(\ZZ/p^n\ZZ)|/|U_n|$. We can now proceed as before to conclude the proof of the theorem.
\end{proof}

As in Corollary~\ref{cor:ord-not-Galois}, we have the following corollary, which proves part (i) of Theorem~\ref{thmE}.
\begin{corollary} \label{cor:ss-not-Galois}
    The covering $\cX_{n}/\cX$ is not Galois for $n$ sufficiently large.
\end{corollary}

Finally, the proof of Theorem~\ref{thm:abelian} is readily extendable to this setting, which proves part (ii) of Theorem~\ref{thmE}. 

\begin{theorem}\label{thm:ss-abelian}
Suppose that \eqref{sup} holds. Let $\cX$ be a connected component of $\cX_l^q(N)$. Let $\cX_0'=\cX$. For each integer $n\ge1$, let $\cX_n'$ be a connected component of the pre-image of $\cX_{n-1}'$ in $\cX_l^q(p^nN)$. Then $\cX_n'/\cX$ is a Galois covering. Furthermore, $\displaystyle\varprojlim_n\Gal(\cX_n'/\cX)$ is an abelian $p$-adic Lie group of dimension 2 (resp. dimension 1) if $l$ is split (resp. non-split) in $K$.
\end{theorem}

\begin{remark}
    In the case where \eqref{ord} holds, each connected component of $\cX_l^q(p^nN)$ corresponds to one of the two field automorphisms of $K$. Therefore, the analogue of Theorem~\ref{thmE} in the setting of oriented ordinary elliptic curves is already covered by Theorem~\ref{thmA}.
\end{remark}

\subsection*{Data availability statement}
Data sharing not applicable to this article as no datasets were generated or analyzed during the current study.
\subsection*{Conflict of interest statement}All authors have no conflicts of interest.

\bibliographystyle{amsalpha}
\bibliography{references}
\end{document}